\newtheorem{thm}{Theorem}[section]
\newtheorem{prop}[thm]{Proposition}
\newtheorem{cor}[thm]{Corollary}
\newtheorem{lem}[thm]{Lemma}
\newcommand{\Spec}{\mathrm{Spec}}
\newtheorem{ex}[thm]{Example}
\newtheorem{defn}[thm]{Definition}
\newtheorem{rmk}[thm]{Remark}
\newtheorem{conj}[thm]{Conjecture}
\newcommand{\sA}{{\mathcal A}}
\newcommand{\sB}{{\mathcal B}}
\newcommand{\sF}{{\mathcal F}}
\newcommand{\sI}{{\mathcal I}}
\newcommand{\sO}{{\mathcal O}}
\newcommand{\sL}{\mathcal{L}}
\newcommand{\sT}{\mathcal{T}}
\newcommand{\sM}{\mathcal{M}}
\newcommand{\m}{\mathfrak{m}}
\newcommand{\md}{\mathrm{d}}
\newcommand{\px}{\dfrac{\partial}{\partial x}}
\newcommand{\sk}{\mathbf{k}}
\newcommand{\Ar}{\mathbb{A}^r_\sk}
\newcommand{\pp}{\mathbb{P}_\sk^1}
\newcommand{\pr}{\mathbb{P}^r_{\sk}}
\title{Surfaces on the Severi line in  positive characteristics}
\thanks{Yi Gu is supported by the NSFC (No. 11801391) and NSF of Jiangsu Province (No. BK20180832); Xiaotao Sun and Mingshuo Zhou are supported by the NSFC (No.11831013 and No.11501154);
Mingshuo Zhou is also supported by NSF of
Zhejiang Province (No. LQ16A010005)}
\author{Yi Gu, Xiaotao Sun and Mingshuo Zhou}
\begin{document}
\begin{abstract}
Let $X$ be a minimal surface of general type over an algebraically closed field $\sk$ of any characteristic. If the Albanese morphism $a_X:X\to \mathrm{Alb}_X$ is generically finite onto its image, we formulate a constant $c(X,L)\ge 0$ for a very ample line bundle $L$ on $\mathrm{Alb}_X$ such that $c(X,L)=0$ if and only if $\dim \mathrm{Alb}_X=2$ and $a_X: X\to \mathrm{Alb}_X$ is a double cover. A refined Severi inequality $$K^2_X\ge (4+{\rm min}\{\,c(X,L),\,\frac{1}{3}\,\})\chi(\sO_X)$$ is proved. Then we prove that $K^2_X=4\chi(\sO_X)$ if and only if the canonical model of $X$ is a flat double cover of an Abelian surface.

\noindent{\bf Keywords} Surface with maximal Albanese dimension; Severi inequality.
\end{abstract}
\maketitle

\section{Introduction}
Let $X$ be a minimal algebraic surface of general type with maximal Albanese dimension defined over an algebraically closed field. The Severi inequality asserts that
$K_X^2\ge 4\chi(\sO_X)$.
In a long time the validity of this inequality is referred to as the Severi conjecture (cf.\cite{Reid78} and \cite{Catanese83}). In \cite{Manetti03}, Manetti proved this conjecture under an extra assumption that $K_X$ is ample.  Later, Pardini \cite{Pardini05} managed to give a complete proof of Severi conjecture in characteristic zero based on her elegant covering trick. After that, Yuan and Zhang \cite{Y-Z14} generalized Pardini's result to fields of all characteristics.

\begin{thm}[Severi inequality, Pardini 05' \& Yuan-Zhang 14']
Let $X$ be a minimal surface of general type with maximal Albanese dimension, then $K_X^2\ge 4\chi(\sO_X)$.
\end{thm}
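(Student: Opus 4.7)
I would follow Pardini's covering trick in its characteristic-free Yuan--Zhang formulation. Let $A := \mathrm{Alb}_X$ and $q = \dim A$; since $X$ is a surface of maximal Albanese dimension, $q \geq 2$. The key observation is that the ratio $K_X^2/\chi(\sO_X)$ is preserved under étale covers, so the inequality may be tested on arbitrarily large étale covers of $X$, obtained by base change along multiplication by $n$ on $A$.

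First, I would set up the covering tower. For each $n$ coprime to $\mathrm{char}(\sk)$, the map $[n]:A\to A$ is étale of degree $n^{2q}$. Base changing $a_X$ along $[n]$ yields an étale cover $\pi_n:X_n\to X$ of degree $n^{2q}$ together with a lift $f_n:X_n\to A$ fitting in a Cartesian square with $[n]$. Étaleness gives $K_{X_n}^2 = n^{2q}K_X^2$ and $\chi(\sO_{X_n}) = n^{2q}\chi(\sO_X)$, so proving the Severi inequality on any single $X_n$ is equivalent to proving it on $X$.

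Next, I would transfer positivity. Fix a very ample line bundle $L$ on $A$. Using $[n]^*L\equiv n^2 L$ together with Mumford vanishing on abelian varieties (valid in all characteristics), one gets $h^0(A,[n]^*L) = n^{2q}h^0(A,L)$, so the linear system $|\pi_n^*a_X^*L| = |f_n^*[n]^*L|$ defines a generically finite morphism $\phi_n:X_n\to\mathbb{P}^{N_n-1}$ with $N_n$ of order $n^{2q}$. A generic pencil inside this linear system plus Bertini would then extract a smooth curve $C_n\subset X_n$ of controlled genus, and combining adjunction with the Hodge index theorem on $X_n$ should yield an inequality
$$K_{X_n}^2\ \geq\ 4\chi(\sO_{X_n})\ -\ E_n,$$
with error term $E_n = o(n^{2q})$. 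Dividing by $n^{2q}$ and letting $n\to\infty$ then produces $K_X^2\geq 4\chi(\sO_X)$.

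The main obstacle is controlling the error $E_n$ uniformly. In characteristic zero Pardini achieves this via Kodaira--Ramanujam vanishing on $A$, which produces sections cutting out low-genus curves on $X_n$. Such refined vanishing may fail in positive characteristic, so Yuan--Zhang replace it with direct Riemann--Roch on $X_n$ and explicit bounds on the positivity of $f_n^*L$, exploiting that $[n]$ is separable for $n$ coprime to $\mathrm{char}(\sk)$ so that Bertini can be applied. The asymptotic bookkeeping needed to show $E_n/n^{2q}\to 0$ is the technical heart of the proof.
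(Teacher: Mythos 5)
Your covering-trick setup is exactly the paper's (and Pardini's): base change along $[n]$, note $K_{X_n}^2=n^{2q}K_X^2$ and $\chi(\sO_{X_n})=n^{2q}\chi(\sO_X)$, and test the inequality asymptotically. But the heart of the argument is missing. You assert that ``a generic pencil \ldots plus Bertini would then extract a smooth curve $C_n$ \ldots and combining adjunction with the Hodge index theorem on $X_n$ should yield $K_{X_n}^2\ge 4\chi(\sO_{X_n})-E_n$.'' Adjunction and Hodge index relate $K_{X_n}^2$ to intersection numbers such as $K_{X_n}\cdot L_{X_n}$ and $L_{X_n}^2$, but they give no handle on $\chi(\sO_{X_n})$; no combination of them produces an inequality of the form $K^2\ge 4\chi - E$. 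The engine of the proof is the slope inequality. The paper takes a general pencil $(B_n,B_n')\in|L|\times|L|$, blows up the base points to obtain a genuine fibration $\varphi_n:\widetilde X_n\to\pp$ with fibre genus $g_n\sim \tfrac12 n^{2q-2}K_X\cdot L_X\to\infty$, computes
$$\lambda_{\varphi_n}=\frac{K^2_{\varphi_n}}{\chi_{\varphi_n}}=\frac{2K_X^2+O(n^{-2})}{2\chi(\sO_X)+O(n^{-2})}\longrightarrow \frac{K_X^2}{\chi(\sO_X)},$$
and then invokes Xiao's inequality $\lambda_{\varphi_n}\ge 4-4/g_n$; letting $n\to\infty$ gives the result. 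In positive characteristic the applicability of Xiao's method is itself a point that must be justified: it works here because the base is $\pp$, so the Harder--Narasimhan quotients of $(\varphi_n)_*\omega_{\widetilde X_n/\pp}$ are direct sums of line bundles and hence strongly semistable (the paper cites \cite{S-S-Z18} for this). Your proposal never identifies this tool, so the inequality you need is unproved.

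A secondary issue: in positive characteristic you cannot expect the members of the pulled-back pencil to be \emph{smooth} curves. The paper proves only that a general $a_n^*B$ is irreducible and reduced (Theorem~\ref{thm: Bertini}, using that $a_n$ does not factor through Frobenius), explicitly remarks that $\varphi_n$ may have singular generic geometric fibre, and arranges the pencil so that the two chosen members meet in their common smooth locus; this weaker conclusion suffices because only the fibration structure and the numerical invariants of $\varphi_n$ are needed, not smoothness of the curves. Your appeal to Bertini to get a smooth $C_n$ ``of controlled genus'' is therefore both too strong and, as used, not the right input.
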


It then arises a natural question: when does the equality hold?
\begin{defn}
The surface $X$ with maximal Albanese dimension is called on the Severi line, if the equality $K_X^2=4\chi(\sO_X)$ holds.
\end{defn}
\begin{conj}[\protect{\cite[Section~5.2]{L-P12} \& \cite[Section~0]{Manetti03}}] \label{Conj: main}
A minimal surface $X$ of general type with maximal Albanese dimension is on the Severi line if and only if its canonical model $X_{\mathrm{can}}$ is a flat double cover of an Abelian surface.
\end{conj}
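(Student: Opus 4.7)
The plan is to split the ``if and only if'' into the two standard directions, with the necessity supplied by the refined Severi inequality announced in the abstract. The sufficiency is a direct numerical calculation, and the necessity is reduced, via the inequality, to unpacking the equality case $c(X,L)=0$ and then descending the resulting double cover to the canonical model.

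For sufficiency, I would run the usual flat double cover bookkeeping. If $f\colon X_{\mathrm{can}}\to A$ is a flat double cover of an Abelian surface $A$, write $f_*\sO_{X_{\mathrm{can}}}=\sO_A\oplus \sL^{-1}$ for some line bundle $\sL$ on $A$, so that $K_{X_{\mathrm{can}}}=f^*(K_A+\sL)=f^*\sL$ since $K_A=0$. Then $K_{X_{\mathrm{can}}}^2=2\sL^2$ and $\chi(\sO_{X_{\mathrm{can}}})=\chi(\sO_A)+\chi(\sL^{-1})=\tfrac{1}{2}\sL^2$ by Riemann--Roch on $A$, giving $K_X^2=K_{X_{\mathrm{can}}}^2=4\chi(\sO_{X_{\mathrm{can}}})=4\chi(\sO_X)$, where the last equality uses that $X\to X_{\mathrm{can}}$ is crepant and that canonical surface singularities are rational. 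In characteristic two, where the cover may be inseparable, the splitting of $f_*\sO$ is more subtle, but the numerical identities persist and can be verified by a parallel calculation.

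For necessity, assume $K_X^2=4\chi(\sO_X)$. Since $X$ is of general type with maximal Albanese dimension we have $\chi(\sO_X)>0$, so the refined Severi inequality $K_X^2\ge (4+\min\{c(X,L),1/3\})\chi(\sO_X)$ forces $\min\{c(X,L),1/3\}=0$, hence $c(X,L)=0$. By the very construction of $c(X,L)$ announced in the abstract, this means $\dim \mathrm{Alb}_X=2$ and $a_X\colon X\to A:=\mathrm{Alb}_X$ is a finite double cover onto an Abelian surface. It remains to descend this to $X_{\mathrm{can}}$: any curve contracted by $X\to X_{\mathrm{can}}$ is a $(-2)$-curve, hence rational, but $A$ contains no rational curves, so each such curve is contracted by $a_X$. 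Therefore $a_X$ factors through $X_{\mathrm{can}}$, producing a finite degree-two morphism $\bar a\colon X_{\mathrm{can}}\to A$. Since $X_{\mathrm{can}}$ has canonical (hence Cohen--Macaulay) singularities while $A$ is regular of the same dimension, miracle flatness implies that $\bar a$ is flat.

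The principal obstacle I anticipate is the careful handling of positive characteristic, especially $p=2$, where the notion of a ``double cover'' bifurcates into separable and inseparable cases. This affects both directions: the sufficiency computation requires the numerical identities to survive in the inseparable regime, and for necessity one must verify that $c(X,L)=0$ characterizes both types of covers, consistent with what is meant by ``double cover'' in the statement. A secondary concern is justifying the positivity of $\chi(\sO_X)$ cleanly in every characteristic and confirming that the equality case of the refined Severi inequality is sharp enough to force $c(X,L)=0$ without residual slack in the estimate.
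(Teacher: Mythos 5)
Your overall architecture matches the paper's: the refined Severi inequality forces $c(X,L)=0$, hence $q=2$ and $a_X$ of degree two (the paper's Corollary~\ref{cor: double cover comes out}), and the sufficiency direction is the same Riemann--Roch bookkeeping as in Section~\ref{Sec: 6} (with the caveat, which you correctly flag, that in characteristic $2$ the sequence $0\to\sO_A\to f_*\sO\to\sL\to 0$ need not split; the computation only needs the exact sequence and $\omega_{X_{\mathrm{can}}/A}=f^*\sL^{\pm 1}$, which hold for any flat Gorenstein double cover). The necessity direction, however, has a genuine gap at the last step: you assert that factoring $a_X$ through $X\to X_{\mathrm{can}}$ produces a \emph{finite} morphism $\bar a\colon X_{\mathrm{can}}\to A$. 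Contracting the $(-2)$-curves is fine (they are rational and $A$ contains no rational curves), but nothing you have said rules out $a_X$ contracting an irreducible curve $C$ with $K_X\cdot C>0$ (e.g.\ an elliptic curve of negative self-intersection lying over a non-canonical singularity of the Stein factorization). Such a curve survives in $X_{\mathrm{can}}$ and is contracted by $\bar a$, so $\bar a$ would be neither finite nor flat, and miracle flatness does not apply. Equivalently, finiteness of $\bar a$ is the statement that the normalization $T_0$ of $A$ in $K(X)$ has only canonical (A-D-E) singularities, and this is exactly where the hypothesis $K_X^2=4\chi(\sO_X)$ must be used a second time.

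The paper closes this gap with the canonical-resolution identity
$$K_X^2-4\chi(\sO_X)=2\bigl(K_A^2-4\chi(\sO_A)\bigr)+\vartheta_0^*K_A\cdot K_{T_0/A}+2\sum_i(r_i-1)+(\text{non-minimality excess})=2\sum_i(r_i-1)+\cdots\ge 0,$$
so that equality forces every $r_i=1$, i.e.\ $T_0$ has at worst rational double points, whence $T_0=X_{\mathrm{can}}$ is the flat double cover (Corollary~\ref{Cor: main 1}). Note that in characteristic $2$ even the termination of this canonical resolution is non-trivial and occupies the rest of Section~\ref{Sec: 6}: for inseparable covers it is proved via the degree of the singular scheme of the associated $1$-foliation, and for separable (possibly wildly ramified) covers via the Liu--Lorenzini theorem on simultaneous regular models of Galois covers. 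Your proposal needs some version of this analysis; as written, the jump from ``$a_X$ has degree two'' to ``$X_{\mathrm{can}}\to A$ is finite'' is unjustified.
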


This conjecture was confirmed  by Barja-Pardini-Stoppino \cite{B-P-S16} and Lu-Zuo \cite{L-Z17} in characteristic zero independently.
\begin{thm}[Barja-Pardini-Stoppino 16', Lu-Zuo 17']\label{thm: B-P-S, L-Z}
Let $X$ be a minimal surface of general type with maximal Albanese dimension over an algebraically closed field of characteristic zero, then $X$ is on the Severi line if and only if the canonical model $X_{\mathrm{can}}$ is a flat double cover of an Abelian surface.
\end{thm}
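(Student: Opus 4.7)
The ``if'' direction is an immediate computation. If $\pi:X_{\mathrm{can}}\to A$ is a flat double cover of an Abelian surface with $\pi_*\sO_{X_{\mathrm{can}}}=\sO_A\oplus\sL^{-1}$ and branch divisor in $|2\sL|$, then $K_{X_{\mathrm{can}}}=\pi^*\sL$ and $K^2_{X_{\mathrm{can}}}=2\sL^2$, while $\chi(\sO_{X_{\mathrm{can}}})=\chi(\sO_A)+\chi(\sL^{-1})=\tfrac12\sL^2$; this gives $K_X^2=K_{X_{\mathrm{can}}}^2=4\chi(\sO_X)$ at once.

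For the converse, the plan is first to show $\dim\mathrm{Alb}_X=2$. If the Albanese image sat inside an Abelian variety of dimension $>2$, it would inherit extra positivity from the ambient group, promoting Pardini's Severi inequality to a strict Clifford-Severi inequality with slope $>4$; equality $K_X^2=4\chi(\sO_X)$ then forces $\dim\mathrm{Alb}_X=2$ and $a_X:X\to\mathrm{Alb}_X=:A$ to be surjective onto an Abelian surface.

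The crux is to prove that the generic degree of $a_X$ equals $2$. Here I would apply Pardini's covering trick to the multiplication maps $[n]:A\to A$: pulling back $a_X$ produces an infinite tower of étale covers $\tilde X_n\to X$ each satisfying $K^2_{\tilde X_n}=4\chi(\sO_{\tilde X_n})$. Fixing a very ample $H$ on $A$, the pencils on $\tilde X_n$ induced by $[n]^*H$ have fibre genus tending to infinity, and Xiao's slope inequality -- in the refined form of Barja-Pardini-Stoppino's continuous rank slope inequality, or equivalently through Chen-Jiang's decomposition of $a_{X*}\omega_X$ as in Lu-Zuo -- forces the general fibre to be hyperelliptic and the hyperelliptic involution to extend globally when equality $K^2=4\chi$ holds. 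Taking $n\to\infty$ and descending back, this involution makes $a_X$ itself generically $2$-to-$1$.

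Once generic degree $2$ is established, let $X\to Y\to A$ be the Stein factorization; then $Y\to A$ is a finite double cover whose numerical invariants agree with those of $X$, forcing $Y=X_{\mathrm{can}}$ with at worst rational double point singularities. Flatness then follows for free from miracle flatness: a finite morphism from a Cohen-Macaulay surface to a regular surface of the same dimension is automatically flat. The main obstacle is the generic-degree-$2$ step: the positivity theorems for $a_{X*}\omega_X$ that force saturation in the slope inequality (generic vanishing, Chen-Jiang decomposition) are genuinely characteristic-zero phenomena, which is precisely why the present paper must introduce the refined inequality $K_X^2\ge\bigl(4+\min\{c(X,L),\tfrac13\}\bigr)\chi(\sO_X)$ to handle positive characteristic.
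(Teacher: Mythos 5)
Your ``if'' direction is correct and is essentially the computation the paper performs in Section~\ref{Sec: 6}: with $Y$ an Abelian surface one has $K_Y=0$ and $\chi(\sO_Y)=0$, so the flat-double-cover formulas give $K^2=4\chi$ for the cover, and passing from $X_{\mathrm{can}}$ to $X$ changes neither invariant. Note, though, that the paper does not reprove this characteristic-zero statement as such --- it cites \cite{B-P-S16} and \cite{L-Z17} --- so the fair comparison is with the paper's proof of its main theorem (Sections~\ref{Sec: slope inequality}--\ref{Sec: 6}), of which characteristic zero is a special case.

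For the converse, your outline follows the right broad strategy (Pardini's covering trick, a slope inequality forcing a degree-two structure, descent, then a local study of the double cover), but the key steps are asserted rather than proved, and one is misstated. First, the mechanism producing the degree-two map is \emph{not} that ``the general fibre is hyperelliptic'': both \cite{L-Z17} and this paper deliberately arrange the fibrations $\varphi_n$ to be non-hyperelliptic (Lemma~\ref{Lem: non-hyperelliptic}), precisely so that Max Noether's theorem and the Clifford-plus bound apply to $S^2\varphi_{n*}\omega\to\varphi_{n*}\omega^{\otimes 2}$; what equality in the slope inequality of Theorem~\ref{thm: slope with gonality p} forces is that some Harder--Narasimhan map $\phi_{n,i}:\widetilde X_n\dashrightarrow Z_{n,i}\subseteq\mathbb{P}(E_i)$ has degree $2$, and its restriction to a fibre is a double cover of a curve of positive genus $b_{n,i}$, not of $\mathbb{P}^1$. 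Second, descending these degree-two maps from the \'etale covers $\widetilde X_n$ to a single double cover of $X$ relative to $a_X$ is the hardest point of the whole argument (in \cite{L-Z17} it uses Xiao's linear bound on automorphism groups; here it is Proposition~\ref{prop:existence of descent} via Lemma~\ref{Lem1}); your sketch only gestures at it. Third, ``the numerical invariants of $Y$ agree with those of $X$, forcing $Y=X_{\mathrm{can}}$'' is exactly the content of the canonical-resolution analysis of Section~\ref{Sec: 6}: one must show that equality in $K_T^2-4\chi(\sO_T)\ge 2(K_Y^2-4\chi(\sO_Y))+\vartheta_0^*K_Y\cdot K_{T_0/Y}+2\sum_i(r_i-1)$ forces every $r_i=1$, i.e.\ that the normalization $T_0$ has only rational double points and that $T$ is already minimal over it; this is a genuine argument, not bookkeeping. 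Finally, your separate step establishing $q=2$ via a ``strict Clifford--Severi inequality for $q>2$'' is left unjustified; in the paper $q=2$ and $\deg a_X=2$ come out together from $c(X,L)=0$ via Proposition~\ref{prop: characterization of c}. Your closing observation --- that the characteristic-zero positivity inputs are exactly what the refined inequality $K_X^2\ge(4+\min\{c(X,L),\tfrac13\})\chi(\sO_X)$ is designed to replace --- is accurate.
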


The main result of this paper is a generalization of above theorem to fields of all characteristics:
\begin{thm}[Corollary~\ref{cor: double cover comes out} and Corollary~\ref{Cor: main 1}]\label{thm:main}
Let $X$ be a minimal surface of general type with maximal Albanese dimension over an arbitrary algebraically closed field, then $X$ is on the Severi line if and only if the canonical model $X_{\mathrm{can}}$ is a flat double cover of an Abelian surface.
\end{thm}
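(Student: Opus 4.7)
The plan is to separate the biconditional into its two directions. The ``if'' direction is a routine numerical check: if $\pi \colon X_{\mathrm{can}} \to A$ is a flat degree-$2$ morphism onto an Abelian surface, then $K_A = 0$ and, in characteristic $\neq 2$, $\pi_* \sO_{X_{\mathrm{can}}} = \sO_A \oplus \sL^{-1}$ for some $\sL$ on $A$, giving $K_{X_{\mathrm{can}}} = \pi^*\sL$, hence $K_{X_{\mathrm{can}}}^2 = 2\sL^2$ and $\chi(\sO_{X_{\mathrm{can}}}) = \sL^2/2$. In characteristic $2$ the same numerical identity $K^2 = 4\chi$ survives for flat degree-$2$ covers of Abelian surfaces via the Bombieri--Mumford theory of inseparable covers; since $X \to X_{\mathrm{can}}$ is crepant, the equality transfers to $X$.

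For the ``only if'' direction, my plan is to first establish the refined Severi inequality
\[K_X^2 \ge \bigl(4 + \min\{c(X,L),\tfrac{1}{3}\}\bigr)\,\chi(\sO_X)\]
announced in the abstract, and then extract the flat double cover from the case $c(X,L) = 0$. To prove the refined inequality I would imitate Pardini's covering trick in the form exploited by Yuan--Zhang \cite{Y-Z14}: fix a very ample $L$ on $\mathrm{Alb}_X$, pull back $a_X \colon X \to \mathrm{Alb}_X$ along the multiplication maps $\mu_n \colon \mathrm{Alb}_X \to \mathrm{Alb}_X$ to get étale covers $X_n \to X$ of degree $n^{2\dim\mathrm{Alb}_X}$, and study the asymptotic linear series associated to $K_{X_n} - a_{X_n}^* L$ via a Clifford/Noether-type bound on $X_n$. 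Dividing through by the cover degree yields a numerical inequality on $X$ whose defect is the invariant $c(X,L)$; the $\tfrac{1}{3}$ truncation should come from the discrete jump that occurs when $\dim \mathrm{Alb}_X > 2$ or $\deg a_X > 2$.

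Granted the refined inequality, $K_X^2 = 4\chi(\sO_X)$ forces $c(X,L) = 0$, so $\dim \mathrm{Alb}_X = 2$ and $a_X$ is generically $2\colon 1$. Taking the Stein factorization $X \xrightarrow{g} Y \xrightarrow{f} \mathrm{Alb}_X$, the finite double cover $f$ of a smooth surface has Gorenstein source with at worst rational double point singularities; hence $g$ is a crepant partial resolution, so $Y$ coincides with $X_{\mathrm{can}}$. Finally, flatness of $f \colon X_{\mathrm{can}} \to \mathrm{Alb}_X$ follows by miracle flatness: $X_{\mathrm{can}}$ is Cohen--Macaulay, $\mathrm{Alb}_X$ is regular of the same dimension, and $f$ is finite.

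The main obstacle is the refined Severi inequality itself over fields of positive characteristic. The characteristic-zero proofs of \cite{B-P-S16} and \cite{L-Z17} use Kawamata--Viehweg and generic vanishing, both of which can fail when $\mathrm{char}(\sk)=p>0$. Following \cite{Y-Z14}, one replaces these tools by asymptotic arguments along $\mu_n$, but keeping track of the sharp constant $\tfrac{1}{3}$ requires a delicate analysis of the boundary behaviour of the Harder--Narasimhan filtration of $(a_X)_* \omega_X$ on $\mathrm{Alb}_X$. In addition, in characteristic $2$ the classification of double covers is nonstandard (one meets $\mu_2$- and $\alpha_2$-torsors alongside the separable ones), so identifying the Stein factor $Y$ as a genuine flat double cover in the Bombieri--Mumford sense will require a separate case analysis. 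I expect this characteristic-$2$ classification step, together with pinning down the sharp constant in the refined inequality, to be where the bulk of the work lies.
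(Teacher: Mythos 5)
Your overall skeleton matches the paper's (refined Severi inequality $\Rightarrow$ $q=2$ and $\deg a_X=2$; then identify the Stein factor over $\mathrm{Alb}_X$ with $X_{\mathrm{can}}$, with flatness by miracle flatness), and your ``if'' direction is essentially the paper's computation: for a flat double cover $\vartheta_0\colon T_0\to Y$ with $K_Y=0$ one gets $K_{T_0}^2=2c_1(\sL)^2$ and $\chi(\sO_{T_0})=c_1(\sL)^2/2$ from $0\to\sO_Y\to\vartheta_{0*}\sO_{T_0}\to\sL\to 0$ and $\omega_{T_0/Y}=\vartheta_0^*\sL^{-1}$, uniformly in all characteristics. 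But there are two genuine gaps in the ``only if'' direction.

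First, the refined inequality is the core of the paper and your proposal does not actually contain a mechanism producing the constant $c(X,L)$. The paper does not run a Yuan--Zhang asymptotic Clifford/Noether argument on $K_{X_n}-a_{X_n}^*L$, nor does it use a Harder--Narasimhan filtration of $(a_X)_*\omega_X$ on $\mathrm{Alb}_X$. It uses Pardini's Lefschetz-pencil fibrations $\varphi_n\colon\widetilde X_n\to\pp$, the Harder--Narasimhan filtration of $(\varphi_n)_*\omega_{\widetilde X_n/\pp}$, and a refined Xiao-type slope inequality $\lambda_{\varphi_n}\ge(4+\min\{\tfrac13,c_n\})\tfrac{g_n-1}{g_n+2}$ in which $c_n$ measures the fibre genus of the images of the degree-$2$ maps $\phi_{n,i}$. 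The decisive (and delicate) step is showing $\varlimsup c_n\ge c(X,L)$: one must descend the double covers $\phi_n$ along the isogenies $\mu_n$ to one of the finitely many rational double covers $\pi_i\colon X\dashrightarrow Y_i$ relative to $a_X$ (separable case, via a new argument replacing Xiao's automorphism bound, which is unavailable in positive characteristic), or identify $\phi_n$ with the foliation quotient $X\to X/\sT_{X/\mathrm{Alb}_X}$ and control the genus drop via the torsion of $\Omega_{\widetilde X_n/\pp}$ (inseparable case, which is how $c_0(X,L)$ and the ramification divisor $R_X$ enter). None of this is in your proposal; saying the analysis is ``delicate'' does not supply it, and it is not clear the Yuan--Zhang route can see $c(X,L)$ at all, since that constant is defined through the double covers relative to $a_X$.

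Second, in the endgame you assert that the finite double cover $Y=T_0$ of $\mathrm{Alb}_X$ ``has at worst rational double point singularities, hence $g$ is a crepant partial resolution.'' This is false for a general flat double cover (the singularities can be irrational, in which case $\chi(\sO_{T_0})\ne\chi(\sO_X)$ and $T_0\ne X_{\mathrm{can}}$); it must be \emph{deduced} from $K_X^2=4\chi(\sO_X)$. The paper does this by running the canonical resolution of $T_0$ and proving $K_X^2-4\chi(\sO_X)=2\sum_i(r_i-1)\ge 0$ with $r_i\ge 1$, so equality forces all $r_i=1$, i.e.\ rational (hence A-D-E) singularities. Even the termination of this canonical resolution is nontrivial in characteristic $2$ and occupies most of Section~6: for inseparable covers it is proved via the singular scheme of the associated $1$-foliation, and for separable (possibly wildly ramified) covers via Liu--Lorenzini's theorem on simultaneous regular models of Galois covers of fibred surfaces. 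Your appeal to a ``Bombieri--Mumford case analysis'' does not engage with either point.
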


Our method is also to use Pardini's covering trick and slope inequality (the same as \cite{Pardini05} and \cite{L-Z17}), but with a lot of refinements to overcome the characteristic $p>0$ obstructions, especially when $p=2$. In characteristic $2$, firstly for the presence of purely inseparable double cover, the slope inequality in \cite{L-Z17} can fail to lead to a double cover $X\dashrightarrow Y$ factoring the Albanese morphism of $X$ (a crucial step of the method in \cite{L-Z17}).  Secondly, the double cover theory is much different in this case, and therefore the  reduction process of \cite{L-Z17} does not work well (cf. Remark~\ref{Rem: not easy to do reductions} for an explanation).

Our strategy here is to formulate and to prove a refined Severi inequality $$K^2_X\ge (4+{\rm min}\{\,c(X,L),\,\frac{1}{3}\,\})\chi(\sO_X)$$
where $c(X,L)\ge 0$ is a constant number defined as following: There are at most finite number of rational double covers $\pi_i$ ($i=1,...,s$)
$$
\xymatrix{
X\ar@{-->}[rr]^{\pi_i}\ar[rd]_{a_X} && Y_i \ar[ld]^{h_i}\\
&\mathrm{Alb}_X&
}
$$
here $Y_i$ is taking to be a minimal model. For a very ample line bundle $L$ on ${\rm Alb}_X$,  let
$c_i(X,L):=\frac{K_{Y_i}\cdot h_i^*L}{K_X\cdot a_X^*L}$ for all $i=1,...,s$. If $p=2$ and $a_X$ is separable, let $c_0(X,L):=\frac{(2K_X-R_X)\cdot L_X}{2K_X\cdot L_X}$
where $R_X:=c_1(\mathrm{det}(\Omega_{X/\mathrm{Alb}_X}))$. Then the constant $c(X,L)$ is defined to be
$$c(X,L)={\rm min}\{\,\,c_i(X,L)\,\,|\,\,0\le i\le s\,\}.$$
It is easy to see that $c(X,L)=0$ if and only if  $\dim \mathrm{Alb}_X=2$ and  the Albanese morphism $a_X:X\to {\rm Alb}_X$ is a double cover. Thus, when $K^2_X=4\chi(\sO_X)$, our inequality implies that $X$ is a double cover of an Abelian surface. After this result, in {\bf Section~\ref{Sec: 6}}, by a detailed study of flat double covers in all characteristics, we prove that $K^2_X=4\chi(\sO_X)$ holds if and only the canonical model $X_{\mathrm{can}}$ is a flat double cover of an Abelian surface.

To prove our refined Severi inequality, by Pardini's covering trick, we formulate the following commutative diagram:
$$\xymatrix{
  X_n \ar[d]_{a_n} \ar[r]^{\nu_n} & X \ar[d]^{a_X} \\
  \mathrm{Alb}_X \ar[r]^{\mu_n} & \mathrm{Alb}_X }\,\quad \,\xymatrix{
\widetilde{X}_n\ar[r]^\pi  \ar[rrd]_{\varphi_n} & X_n\ar[r]^{a_n} \ar@{-->}[dr] & \mathrm{Alb}_X\ar@{-->}[d]^{\iota_n} \\
 && \pp}$$
where $L$ is a very ample line bundle on $\mathrm{Alb}_X$, $\iota_n$ is defined by the linear pencil of dimension one in $|L|$ generated by a general member
$(B_n,B'_n)\in |L|\times |L|$ and $\pi$ is the minimal elimination of the indeterminacy. One obtains a series of fibrations $\varphi_n: \widetilde X_n\to \pp$ of genus $g_n$ with slopes
$$\lambda_{\varphi_n}=\frac{K^2_{\varphi_n}}{\chi_{\varphi_n}}=\frac{2K_X^2+6n^{-2}K_X\cdot L_X +8n^{-4}L_X^2}{2\chi(\sO_X)+n^{-2}K_X\cdot L_X+n^{-4}L_X^2}\rightarrow \frac{K_X^2}{\chi(\sO_X)}$$ (as $n\rightarrow\infty$) where $L_X=a_X^*L$. Then, by Xiao's slope inequality $$\lambda_{\varphi_n}\ge 4-\frac{4}{g_n},$$
Pardini was able to prove Severi inequality $K_X^2\ge 4\chi(\sO_X)$ in \cite{Pardini05} since $g_n\rightarrow\infty$ when $n\rightarrow\infty$. This part will be generalized to
characteristic $p>0$ in our {\bf Section~\ref{Section: Severi inequality}} ($\varphi_n$ can  inevitably have singular generic geometric fibre since the Bertini theorem is not as strong as over $\mathbb{C}$).

To prove our refined Severi inequality, we choose $(B_n,B_n')\in |L|\times |L|$ such that $\varphi_n: \widetilde X_n\to \pp$ are non-hyperelliptic fiberations of genus $g_n$, the Harder-Narasimhan filtration $$0=E_0\subset E_1\subset\cdots\subset E_{m}=(\varphi_{n})_*\omega_{\widetilde X_n/\pp}$$ defines a series of rational maps $\phi_{n,i}$
$$
\xymatrix{
\widetilde X_n\ar@{-->}[rr]^{\phi_{n,i} \ \ \ \ }\ar[rd]_{\varphi_n } && Z_{n,i}\subseteq\mathbb{P}(E_i) \ar[ld]\\
&\pp&}$$ by the canonical morphism $\varphi_n^*E_i\hookrightarrow \varphi_n^*(\varphi_{n})_*\omega_{\widetilde X_n/\pp}\to \omega_{\widetilde X_n/\pp}$.
If ${\rm deg}(\phi_{n,i})\neq 2$ for all $i$, the slope inequality of \cite{L-Z17} (see Theorem~\ref{thm: slope with gonality p} (1) for characteristic $p>0$ version) shows
$$\lambda_{\varphi_n}\ge \frac{9}{2}\frac{g_n-1}{g_n+2}.$$ Thus, when $K^2_X=4\chi(\sO_X)$, the authors of \cite{L-Z17} are able to obtains double covers $\phi_n:\widetilde X_n\dashrightarrow Z_{n,i}$ for some $i$. The key of \cite{L-Z17} is to show that these double covers led to a double cover $X\dashrightarrow Y$ factoring $a_X:X\to {\rm Alb}_X$ and then run by inductions. This argument seems not work in characteristic $2$.

Our strategy is to use another kind of slope inequality (see (2) of Theorem~\ref{thm: slope with gonality p}). Let $b_{n,i}$ be the genus of generic fiber of $Z_{n,i}\to \pp$, and $c_n:=\mathrm{min}\{\,\, \dfrac{b_{n,i}}{g_{n,i}}| \,\, \deg(\phi_{n,i})=2 \,\, \}$, we show in {\bf Section~\ref{Sec: slope inequality}} the following slope inequality
\begin{equation}\label{eq.1}
\lambda_{\varphi_n}\ge (4+{\rm min}\{\frac{1}{3},\,c_n\})\frac{g_n-1}{g_n+2}.
\end{equation}
The key technical part of {\bf Section~\ref{Sec: 5}} is devoted to show
$$\varlimsup\limits_{n\to \infty}c_n\ge  c(X,L)$$
for suitable choice of $\varphi_n$ (see Proposition \ref{prop: ratio}), which implies
$$\varlimsup\limits_{n\to \infty}\,\lambda_{\varphi_n}\ge 4+{\rm min}\{\frac{1}{3},\,c(X,L)\}$$
and thus the refined Severi inequality.

In {\bf Section~\ref{Sec: 6}}, a study of flat double cover over Abelian surface is carried out to obtain Theorem~\ref{thm:main}. Finally, in {\bf Section~7}, we give two examples of surfaces on the Severi line in characteristic $2$--one with an inseparable Albanese morphism and one with a separable one.

To make the paper more self-contained, some preliminaries are given in {\bf Section~\ref{Sec: pre}}, we suggest the readers first skip this section and then return when it is referred to.

{\bf Conventions:}
We make the following conventions in this paper:
\begin{enumerate}
\item $\sk$ is an algebraically closed field of characteristic $p>0$;

\item a surface fibration is a flat morphism $f$ from a smooth projective surface $S$ to a smooth curve $C$ such that $f_*\sO_S=\sO_C$. For such a fibration, we write  $K_f:=K_{S}-f^*K_C$ and $\chi_f:=\deg (f_*\omega_{S/C})$.

\item for a rational map $f: S\dashrightarrow T$ of $\sk$-varieties, a dominate rational map $g: S\dashrightarrow T'$ is called to be \emph{relative to $f$ or to $T$} if there is another rational map $h: T'\dashrightarrow T$ such that $f=h\circ g$. Note this $h$ is unique if exists.

%
\end{enumerate}
\section{Preliminaries}\label{Sec: pre}
\subsection{A Bertini type Theorem}
Let $X$ be a smooth projective variety over $\sk$. Let $\varphi: X\to \mathbb{P}_\sk^r$ be a non-degenerated morphism.
\begin{thm}\label{thm: Bertini}
Suppose $\varphi$ does not factor through the relative Frobenius morphism $F_{X/\sk}: X\to X^{(-1)}:=X\times_{\sk, F_\sk} \sk$ and  $\dim \varphi(X)\ge 2$, then  $\varphi^*(H)$ is a reduced and irreducible divisor for a general hyperplane $H$ in $\mathbb{P}_\sk^r$.
\end{thm}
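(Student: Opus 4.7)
The plan is to treat irreducibility and reducedness separately: irreducibility follows from a characteristic-free Bertini theorem of Jouanolou, while reducedness requires showing that a non-reduced generic hyperplane section forces $\varphi$ to factor through the relative Frobenius $F_{X/\sk}$.

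For irreducibility I would apply Jouanolou's Bertini irreducibility theorem to $\varphi(X)\subseteq \pr$: since $\dim\varphi(X)\ge 2$ and $\varphi(X)$ is non-degenerate, for $H$ in a dense open of $(\pr)^\vee$ the preimage $\varphi^{-1}(H)$ is topologically irreducible, so $\varphi^*H=mD$ as a Weil divisor with $D$ prime and $m\ge 1$. It then suffices to prove $m=1$; I argue by contradiction, supposing $m\ge 2$. Set $L:=\varphi^*\sO_\pr(1)$ and let $V\subseteq H^0(X,L)$ be the $(r+1)$-dimensional base-point-free subspace defining $\varphi$. I would show a dense open $U\subseteq V$ consists of $m$-th powers of sections of a fixed line bundle $M$ with $M^{\otimes m}\cong L$: for $s\in U$ with $\text{div}(s)=mD_s$ and $D_s$ prime, the class $[D_s]$ varies algebraically in $s$ while remaining an $m$-th root of $[L]\in\text{Pic}(X)$; since such roots form a finite set in $\text{Pic}(X)(\sk)$ and $U$ is connected, $[D_s]=[M]$ is constant, and choosing $u_s\in H^0(X,M)$ with $\text{div}(u_s)=D_s$ and rescaling, $s=u_s^m$.

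Next, I would show $m$ is a power of $p$ by analyzing a pencil. Pick linearly independent $s_1=t_1^m,s_2=t_2^m\in U$ (possible since $\dim V=r+1\ge 3$). Writing $m=p^a m'$ with $p\nmid m'$, perfectness of $\sk$ and additivity of the $p^a$-th power give $s_1+\mu s_2=(t_1^{m'}+\mu^{1/p^a}t_2^{m'})^{p^a}$, and since $p\nmid m'$ the factor $t_1^{m'}+\mu^{1/p^a}t_2^{m'}$ splits as $\prod_{i=1}^{m'}(t_1-\gamma_i t_2)$ for $m'$ distinct $\gamma_i\in\sk$. Hence $\text{div}(s_1+\mu s_2)=p^a\sum_{i=1}^{m'}\text{div}(t_1-\gamma_i t_2)$, which equals $p^a m'\cdot D_\mu$ with $D_\mu$ prime only if the divisors $\text{div}(t_1-\gamma_i t_2)\in|M|$ all coincide; but for linearly independent $t_1,t_2$, distinct $\gamma_i$ give distinct members of the pencil $\{t_1+\beta t_2\}$, so this forces $m'=1$, i.e., $m=p^a$.

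Finally, with $m=p^a$ I would conclude the Frobenius factorization. The perfectness of $\sk$ gives $\sum c_i t_i^{p^a}=(\sum c_i^{1/p^a}t_i)^{p^a}$, so the set of $p^a$-th powers of sections of $M$ forms a $\sk$-linear subspace of $H^0(X,L)$, namely the image of $F^{a*}_{X/\sk}: H^0(X^{(-a)},L')\to H^0(X,L)$ for a Frobenius descent $L'$ of $L$ (which exists since $L\cong M^{\otimes p^a}$ is $p^a$-divisible in $\text{Pic}(X)$). Since a Zariski-dense open of $V$ consists of $p^a$-th powers, $V$ lies entirely in this $\sk$-subspace, hence descends through $F^a_{X/\sk}$ to give a factorization $\varphi=\varphi'\circ F^a_{X/\sk}$ for some $\varphi':X^{(-a)}\to\pr$; in particular $\varphi$ factors through $F_{X/\sk}$, the desired contradiction. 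The main obstacle I anticipate is the descent step—lifting the section-level identification to an honest morphism of $\sk$-schemes through iterated Frobenius—which requires careful bookkeeping of Frobenius twists of line bundles and uses essentially that $\sk$ is algebraically closed.
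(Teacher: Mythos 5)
Your proposal is correct in outline, but it takes a genuinely different route from the paper's. Both arguments invoke Jouanolou's theorem for the irreducibility of $\varphi^{-1}(H)$; the difference is entirely in how reducedness is extracted from the non-factorization through Frobenius. The paper works locally and infinitesimally on the incidence variety $Z=\{(x,H):\varphi(x)\in H\}$: since reducedness of the irreducible fibre only requires smoothness of $p_2\colon Z\to(\pr)^*$ at a single point, the authors compute $\Omega_{Z/(\pr)^*}$ explicitly over an affine chart and show that if $p_2$ is nowhere smooth then $t_1\mathrm{d}f_1+\cdots+t_r\mathrm{d}f_r\equiv 0$; evaluating at the points $(x,(0,\dots,f_i^{-1},\dots,0))$ forces each $\mathrm{d}f_i\equiv 0$, i.e.\ $\varphi^*\Omega_{\pr}\to\Omega_X$ vanishes, which is exactly the Frobenius factorization. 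Your argument is instead global, at the level of the linear system $V$: a generic multiplicity $m\ge 2$ forces $V$ to consist of $m$-th powers of sections of an $m$-th root $M$ of $L$, the pencil computation with $T^{m'}+\lambda$ pins $m$ down to $p^a$, and Frobenius descent of $V$ yields the factorization. This buys strictly more information (the generic multiplicity is exactly a $p$-power and the whole system is a Frobenius pullback) at the cost of more bookkeeping, whereas the paper's tangent-space computation is shorter and avoids Picard-scheme input. Two steps in your write-up deserve tightening: the constancy of $[D_s]$ should not be left as ``varies algebraically''--- cleaner is to note that the $m$-th roots of $[L]$ in $\mathrm{Pic}(X)$ form a finite set $M_1,\dots,M_k$ (finiteness of $\mathrm{Pic}(X)[m]$ via the Picard variety and the theorem of the base), that each locus of $m$-th powers of sections of $M_j$ is Zariski closed in $H^0(X,L)$ (it is the affine cone over the image of a projective morphism), and that the irreducible $V$ is covered by these finitely many closed sets, hence lies in one of them; and in the pencil step one should record that the coincidence $\mathrm{div}(t_1-\gamma_it_2)=D_\mu$ for distinct $\gamma_i$ contradicts linear independence of $t_1,t_2$ because two sections with equal divisors differ by an element of $H^0(X,\sO_X^*)=\sk^*$. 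With those points filled in, and the standard identification of the space of $p^a$-th powers with the image of $H^0(X^{(-a)},M')\to H^0(X,L)$ that you already flag, the proof is complete.
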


\begin{proof}
Let $Z:=\{(x,H)\in X\times (\mathbb{P}_\sk^r)^*| \varphi(x) \in H\}$ be the incidence variety. Its first projection $p_1: Z\to X$ is a $\mathbb{P}^{r-1}$ bundle (hence $Z$ is smooth) and the second projection $p_2: Z\to (\mathbb{P}_\sk^r)^*$ is flat. The fibre $p_2^{-1}(H)$ for $H\in (\mathbb{P}_\sk^r)^*$ is by construction  $\varphi^{-1}(H)\subseteq X$. By \cite[Thm.~I.6.10(1)]{Jouanolou}, $\varphi^{-1}(H)$ is irreducible since $\dim\varphi(X)\ge 2$ for a general $H$. It then remains to show the irreducible curve $\varphi^{-1}(H)$ is smooth at some point. Note that a point $x\in \varphi^{-1}(H)$ is smooth if and only if $(x,H)\in Z$ is smooth for $p_2$. Since the smooth locus of $p_2$ on $Z$ is either empty or dominates $(\mathbb{P}_\sk^r)^*$, it suffices to prove $p_2$ is smooth at some point.

Now we  prove the general smoothness for $p_2$. Choose the standard affine subset $\Ar\subsetneq \pr$ and take $X_0:=\varphi^{-1}(\Ar)$. The morphism $\varphi: X_0\to \Ar$ is then given by $r$ regular functions $f_1,...,f_r$ as $$x\mapsto [1,f_1(x),...,f_r(x)] \in \pr.$$ Since $\varphi(X)$ is non-degenerated, $f_i$ does not vanish identically on $X_0$, we may therefore find an open dense subset $U\subseteq X_0$ where $f_1,...,f_r$ are all invertible.
Above the open affine subset $$(\mathbb{A}_\sk^r)^*=\{H_{1,t_1,...,t_r}: X_0+t_1X_1+\cdots+t_rX_r=0| t_1,...,t_r\in \sk\}\subsetneq (\mathbb{P}_\sk^r)^*,$$  the space $Z_0:=\{(x, H_{1,t_1,...,t_r})| x\in U, \varphi(x)\in H_{1,t_1,...,t_r}\}\subseteq Z$ is nothing but $\Spec U[t_1,...,t_r]/(t_1f_1+\cdots t_rf_r+1).$
Now let us calculate the relative K\"ahler differential sheaf $\Omega_{Z/(\mathbb{P}_\sk^r)^*}|_{Z_0}$. By construction, we have  $\Omega_{Z/(\mathbb{P}_\sk^r)^*}|_{Z_0}$  is isomorphic to $\Omega_{U/\sk}[t_1,...,t_r]/(t_1\mathrm{d}f_1+\cdots+ t_r\mathrm{d}f_r).$ Note that $\Omega_{U/\sk}$ is a locally free sheaf of rank $d=\dim X$ which is one more than that the relative dimension of $p_2$, thus $p_2$ is not smooth anywhere on $Z_0$ only if $t_1\mathrm{d}f_1+\cdots+ t_r\mathrm{d}f_r$ vanishes identically on $Z_0$. In other words, for any closed point $x\in U$, and any $t_1,...,t_r\in \sk$ such that $t_1f_1(x)+\cdots t_rf_r(x)=1$, we have $t_1\overline{\mathrm{d}f_1}+\cdots +t_r\overline{\mathrm{d}f_r}=0\in \Omega_{U/\sk}\otimes \sk(x).$ Assume this is the case, then for any $x\in U$, considering the point $$\xi_i=(x, (0,\cdots, f_i^{-1}, \cdots, 0))\in Z_0,$$ the vanishing of $t_1\overline{\mathrm{d}f_1}+\cdots +t_r\overline{\mathrm{d}f_r}$ now gives the vanishing of $f_i^{-1}\mathrm{d} f_i$ at $x$. By varying $x$, we have $f_i^{-1}\mathrm{d}f_i$ vanishes identically on $U$. As a consequence $\mathrm{d} f_i\equiv 0, i=1,...,r$ on $U$, the morphism $\varphi^*\Omega_{\mathbb{P}_\sk^r}\to \Omega_{X}$ also vanishes identically on $U$ and hence on $X$. It then follows that $\varphi$ factors through the relative Frobenius morphism. A contradiction to our assumption.
\end{proof}
\begin{rmk}\label{Rem: remark on Bertini}
(1). In \cite[Thm.~I.6.10]{Jouanolou}, the unramification assumption ({\it i.e.}, $\Omega_{X/\mathbb{P}_{\sk}^r}=0$) is assumed for reducibility of $\varphi^*H$.

(2). Suppose $\varphi: X\to Y\subseteq \pr$ is a finite purely inseparable morphism of smooth $d$-folds $X,Y$, then $\varphi^*H_i, i=1,...,d$ can never intersect transversely for any hyperplanes $H_1,...,H_d$.
\end{rmk}

\subsection{Inseparable double cover and foliations}\label{Subsec: foliation}
Suppose $p=2$ and $Y$ is a smooth projective surface over $\sk$. It is well known that any local derivatives $D_1, D_2$ on $Y$, $[D_1,D_2]=D_1\cdot D_2-D_2\cdot D_1$ and $D_1^2$ are again derivatives.
\begin{defn}[\protect{\cite[\S~1]{Ekedahl2}}]\label{defn: 1-foliation}
A $1$-foliation on $Y$ is a saturated subsheaf $\sF$ of the tangent sheaf $\sT_{Y/\sk}$ such that for any local derivatives $D_1,D_2$ in $\sF$, both $[D_1,D_2]$ and $D_1^2$ are also in $\sF$.
\end{defn}
\begin{thm}[\protect{\cite[Prop.~2.4]{Ekedahl}}]\label{thm: 1-1 on foliation}
There is a $1-1$ correspondence between the set of $1$-foliation $\sF$ of rank $1$ and the set of finite inseparable double cover $\pi: Y\to T$ where $T$ is normal.
\end{thm}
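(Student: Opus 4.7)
The plan is to construct the correspondence explicitly in both directions and then check that the two constructions are mutually inverse; this amounts to a sheaf-theoretic version of the Jacobson correspondence between restricted Lie algebras of derivations and purely inseparable extensions of height one.

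Starting from a rank-one saturated $1$-foliation $\sF \subset \sT_{Y/\sk}$, I would define $\sO^{\sF}\subseteq \sO_Y$ as the subsheaf of germs annihilated by every local section of $\sF$, and set $T := \underline{\Spec}_{\sk}(\sO^{\sF})$. Since $p=2$, each derivation $D$ satisfies $D(f^{2})=2fD(f)=0$, so $\sO_Y^{2}\subseteq \sO^{\sF}$, which bounds the generic degree of $\pi:Y\to T$ by $2$. Locally $\sF$ is generated by a single derivation $D$ (as $Y$ is smooth and $\sF$ is saturated of rank one, hence locally free), and the closure condition $D^{2}=gD$ for some $g\in\sO_Y$ permits a Jacobson-style argument producing a free $\sO^{\sF}$-basis $\{1,u\}$ of $\sO_Y$; thus $\pi$ is a finite flat purely inseparable double cover. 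Normality of $T$ is then checked by verifying regularity at height-one primes (via an explicit local model for $D$) and by descending Serre's $S_2$ condition from $\sO_Y$ to $\sO^{\sF}$ along the finite flat map.

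In the reverse direction, for an inseparable double cover $\pi: Y\to T$ with $T$ normal, I would take $\sF := \sT_{Y/T}$, the kernel of $\sT_{Y/\sk}\to\pi^{*}\sT_{T/\sk}$. Closure under bracket and squaring is automatic, since both operations preserve the property of annihilating the subring $\pi^{-1}\sO_T$. Rank one and saturation follow from flatness of $\pi$ in codimension one together with the degree-two hypothesis. The two constructions are inverse by virtue of the identity $\pi^{-1}\sO_T = \sO^{\sT_{Y/T}}$, which holds precisely because $T$ is normal, so every rational function killed by all relative derivations descends to $T$.

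The main obstacle will be the local analysis at the zero locus of $\sF$ (equivalently at the branch locus of $\pi$). A saturated rank-one foliation on a smooth surface has only isolated zeros, and one must verify that near such a zero $T$ remains normal and $\pi$ stays flat of degree $2$. My plan is to choose local coordinates $x,y$ and write $D=a\,\partial_x+b\,\partial_y$, describe $\sO^{\sF}$ concretely as the subalgebra generated by $x^{2}$, $y^{2}$ and a first integral of $D$, and check directly that the singularities produced on $T$ are rational double points of $A_1$-type (or their wild characteristic-two variants), which are in particular normal, so the global picture goes through.
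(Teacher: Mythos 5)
The paper itself gives no proof of this statement: it is quoted directly from Ekedahl (Prop.~2.4 of \emph{Foliations and inseparable morphisms}), with only the two maps of the correspondence written down. Your proposal reconstructs Ekedahl's argument (the sheafified Jacobson correspondence), and the overall architecture --- $\sF\mapsto \Spec(\sO^{\sF})$, $\pi\mapsto \sT_{Y/T}$, mutually inverse because $\pi^{-1}\sO_T=\sO^{\sT_{Y/T}}$ when $T$ is normal --- is the right one. However, two of your intermediate claims are false, and one of them sits inside your verification that $T$ is normal.

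First, $\pi\colon Y\to Y/\sF$ is in general \emph{not} flat over the images of the zeros of $\sF$. Take $p=2$, $\Spec \sk[x,y]\subseteq Y$ and $D=x\partial_x+y\partial_y$: one checks $D^2=D$, that $\sO\cdot D$ is saturated of rank one with an isolated zero at the origin, and that $\sO^{\sF}=\sk[x^2,xy,y^2]\cong \sk[u,v,w]/(uw-v^2)$. Writing $\sk[x,y]=\sO^{\sF}\oplus M$ with $M$ the odd part, $M$ has rank one but needs two generators at the origin, so it is not free and $\pi$ is finite of degree $2$ but not flat there. Consequently your plan to descend $S_2$ ``along the finite flat map'' breaks exactly at the points where normality is in doubt, and there is a latent circularity: flatness of a finite surjection onto a surface germ is normally deduced from regularity or Cohen--Macaulayness of the target, which is what you are trying to establish. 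The repair is standard and much shorter than your local analysis: $\sO^{\sF}=\sO_Y\cap K(Y)^{\sF}$ inside $K(Y)$, and the intersection of the integrally closed ring $\sO_Y$ with a subfield of $K(Y)$ is integrally closed in that subfield; finiteness of $\sO_Y$ over $\sO_Y^2\subseteq\sO^{\sF}$ gives coherence and finiteness of $\pi$. Normality of $T$ therefore requires no local model at all.

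Second, some smaller inaccuracies. The containment $\sO_Y^2\subseteq\sO^{\sF}$ only bounds the generic degree by $[K(Y):K(Y)^2]=4$ for a surface; that the degree is exactly $2$ is the content of Jacobson's theorem for the one-dimensional restricted Lie algebra $K(Y)\cdot D$, which you invoke afterwards anyway, so the order of deductions should be straightened out. The quotient singularities of a rank-one $1$-foliation in characteristic $2$ are not of type $A_1$ or mild variants in general --- the paper's own Section~7 example has a foliation singularity of multiplicity $4$, and the cited work of Hirokado exists precisely because resolving these singularities is nontrivial; fortunately only normality is needed, which is free as above. Finally, you verify the composite $\pi\mapsto\sF\mapsto\pi$, but the other composite $\sF\mapsto\pi\mapsto\sT_{Y/T}$ also needs the (one-line) remark that $\sF\subseteq\sT_{Y/T}$ are both saturated of rank one in $\sT_{Y/\sk}$, hence equal.
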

 This correspondence is given by
$$\{\pi: Y\to T \}\mapsto \{\sF=\sT_{Y/T}\} \text{and} \, \{\sF\subseteq \sT_{Y/\sk}\}\mapsto \{\pi: Y\to T=Y/\sF\}.$$
Now given a $1$-foliation $\sF$ of rank $1$, we obtain automatically the following exact sequence:
\begin{equation}\label{equ: exact sequence associated to a 1-foliation}
0\to \sF\to \sT_{Y/\sk}\to \sI_Z\sM\to 0,
\end{equation}
where $\sM$ is an invertible coherent sheaf and $Z$ is a scheme of finite length. The scheme $Z$ is called the singular scheme of $\sF$, it lies exactly above the singularities of $T=Y/\sF$ (cf. \cite[\S~3]{Ekedahl}). In particular, $Z=\emptyset$ if and only if $T$ is smooth.
By (\ref{equ: exact sequence associated to a 1-foliation}), we have
\begin{equation}\label{equ: for Z}
\begin{split}
\deg Z&=c_2(\sT_{Y/\sk})+c_1(\sF)\cdot (c_1(\sF)+K_Y)\\
      &=c_2(Y)+c_1(\sF)\cdot (c_1(\sF)+K_Y).
\end{split}
\end{equation}

\begin{ex}\label{exp: 1}
One typical example of a $1$-foliation of rank $1$ is obtained from a fibration. Let $f: Y\to C$ be a surface fibration, then $\sT_{Y/C}$ is a $1$-foliation of rank $1$. The finite inseparable double cover $\pi: Y\to T=Y/\sT_{Y/C}$ it associates is exactly the normalized relative Frobenius homomorphsim:
$$
\xymatrix{
Y\ar[r]^{\pi} \ar@/^2pc/[rrr]^{F_{Y/C}} \ar[rd]_f & T\ar[rr]^{\text{normalisation} \ \ \ \ }\ar[d]^g && Y\times_{C,F_{C}} C \ar[r] \ar[d] & Y\ar[d]^{f}\\
                                             & C \ar@{=}[rr]                      && C\ar[r]^{F_C}                     & C
}$$

Conversely, if $\pi: Y\to T$ is a finite inseparable double cover relative to $f: Y\to C$ with $T$ normal, then $\pi$ must be the normalized relative Frobenius as above.
%
%
\end{ex}

\section{Slope inequalities of non-hyperellitic fibrations}\label{Sec: slope inequality}
Let $f: Y\to C$ be a relatively minimal, non-hyperelliptic surface fibration of fibre genus $g\ge 2$.
Let
$$0=E_0\subseteq E_1\subseteq \cdots \subseteq E_m=f_*\omega_{Y/C}$$
be the Harder-Narasimhan filtration of $f_*\omega_{Y/C}$. Then for each $i$, there defines a natural rational map $\phi_i: Y\dashrightarrow \mathbb{P}_C(E_i)$ induced by the generically surjective morphism $f^*E_i \hookrightarrow f^*f_*\omega_{Y/C}\to \omega_{Y/C}$. Whenever $\mathrm{rank}(E_i)>1$, $Z_i:=\phi_i(Y)\subset \mathbb{P}(E_i)$ is a surface and $\phi_i$ is a generically finite morphism. In this case, denote by
\begin{itemize}
\item  $\gamma_i:=\deg(\phi_i)$ and  $b_i:=$ the fibre arithmetic genus of $Z_i\to C$.
\end{itemize} Note that $\phi_i$ is factored through by $\phi_{i+1}$ birationally, thus $\gamma_{i+1}\mid \gamma_{i}$.
%
\begin{thm}\label{thm: slope with gonality p}
Suppose $f: Y\to \pp$ is a relatively minimal, non-hyperelliptic surface fibration of fibre genus $g\ge 2$ over $\sk$.

\begin{enumerate}
\item (cf. \cite[Thm~1.2]{L-Z18} for characteristic zero) Assume that $K_f$ is nef and there is an integer $\delta$ such that either $\gamma_i=1$ or $\gamma_i>\delta$ holds for each $i$, then $$K_f^2\ge (5-\dfrac{1}{\delta})\dfrac{g-1}{g+2}\chi_f.$$

\item Assume that $K_f$ is nef and there is a constant $0<c\le \dfrac{1}{3}$ such that $b_i\ge cg$ whenever $\gamma_i=2$ then $$K_f^2\ge  (4+c)\dfrac{g-1}{g+2} \chi_f.$$
\end{enumerate}
\end{thm}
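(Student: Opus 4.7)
The plan is to apply Xiao's slope method through the Harder--Narasimhan (HN) filtration $0 = E_0 \subset \cdots \subset E_m = f_*\omega_{Y/\pp}$, refining the lower bound on the degree of the $i$-th moving part according to the geometric hypotheses on $\gamma_i$ in (1) and on $b_i$ in (2). First I would set up the HN data: write $r_i := \mathrm{rank}(E_i)$ and let $\mu_1 > \cdots > \mu_m$ be the HN slopes, with $r_0 := 0$ and $\mu_{m+1} := 0$. Over $\pp$ this yields the identity $\chi_f = \sum_i r_i(\mu_i - \mu_{i+1})$, and weak positivity of $f_*\omega_{Y/\pp}$ (valid over $\pp$ in every characteristic) gives $\mu_m \geq 0$.

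Next I would introduce moving parts: the evaluation map $f^*E_i \to \omega_{Y/\pp}$ cuts out a sub-linear system of $|K_f|$ whose moving part $M_i$ satisfies $K_f = M_i + V_i$ with $V_i \geq 0$ effective. Setting $d_i := M_i \cdot F$ for a general fibre $F$, non-degeneracy of $\phi_i|_F$ in $\mathbb{P}^{r_i - 1}$ gives $d_i \geq \gamma_i(r_i - 1)$; Clifford's theorem applied to the special subsystem $M_i|_F \subset |\omega_F|$ of dimension $r_i - 1$ additionally yields $d_i \geq 2(r_i - 1)$ in the birational case $\gamma_i = 1$. For (2) in the new case $\gamma_i = 2$, the image curve $\overline{F} := \phi_i(F)$ has arithmetic genus $b_i$, so the pulled-back subsystem factors through $\omega_{\overline{F}}$, and a Clifford plus Riemann--Roch argument on $\overline{F}$ produces a quantitative improvement of $d_i$ controlled by $b_i$, into which the hypothesis $b_i \geq cg$ feeds directly.

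With these fibre-degree estimates in hand, I would apply Xiao's standard chain inequality
\[
K_f^2 \;\geq\; \sum_{i=1}^{m}(d_i + d_{i+1})(\mu_i - \mu_{i+1}), \qquad d_{m+1} := 0,
\]
obtained by summing positivity of $\bigl(K_f - \tfrac{1}{2}(\mu_i + \mu_{i+1})\,F\bigr)^2$ across the relevant birational modifications. Coupling this with $\chi_f = \sum r_i(\mu_i - \mu_{i+1})$, a convex-optimization argument in the variables $(r_i, \mu_i)$ under the boundary constraints $r_m = g$ and $d_m = 2g - 2$ delivers both slope bounds: substituting $d_i \geq 2(r_i - 1)$ when $\gamma_i = 1$ and $d_i \geq (\delta + 1)(r_i - 1)$ when $\gamma_i > \delta$ yields (1), while substituting the $b_i$-refinement when $\gamma_i = 2$ yields (2).

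The hardest step, and the one most sensitive to characteristic $p$, will be extracting the quantitative Clifford-type estimate on $d_i$ in the $\gamma_i = 2$ case. Here the cover $F \to \overline{F}$ may be purely inseparable (in characteristic $2$), and $\overline{F}$ is in general singular; controlling $\omega_{\overline{F}}$ via the normalization and reading off an inequality of the shape $d_i \geq 2(r_i - 1) + (\text{contribution linear in }b_i)$ that survives the optimization is the main technical content. The supporting ingredients---weak positivity of $f_*\omega_{Y/\pp}$ and the chain inequality via base change---are comparatively routine in our setting because the base is $\pp$.
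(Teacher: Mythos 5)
Your proposal rests entirely on Xiao's chain inequality $K_f^2 \geq \sum_i (d_i+d_{i+1})(\mu_i-\mu_{i+1})$ together with fibre-degree estimates for the $d_i$, and this single tool cannot produce the stated bounds: it saturates at slope $4$ asymptotically. Concretely, when $f_*\omega_{Y/\pp}$ is semistable ($m=1$) the only available datum is $d_1=2g-2$ for the relative canonical map, and the chain inequality gives at best $K_f^2\geq (4g-4)\mu_1 = 4\tfrac{g-1}{g}\chi_f$, which is strictly weaker than $(4+c)\tfrac{g-1}{g+2}\chi_f$ as soon as $g>8/c$ (and with your normalization $d_{m+1}:=0$ you only get slope $2$ there; the correct boundary value is $d_{m+1}:=2g-2$). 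The ingredient you are missing --- and the actual source of the coefficients $5-\tfrac{1}{\delta}$ and $4+c$ --- is the second multiplication map $\varrho: S^2 f_*\omega_{Y/\pp}\to f_*\omega_{Y/\pp}^{\otimes 2}$, which is generically surjective by Max Noether's theorem precisely because $f$ is non-hyperelliptic. Via Riemann--Roch this yields a second, independent inequality $K_f^2\geq \deg\mathrm{Im}(\varrho)-\chi_f$, and $\deg\mathrm{Im}(\varrho)$ is bounded below by pushing the Harder--Narasimhan filtration through $\varrho$ and applying the Clifford plus theorem to the ranks $\mathrm{rk}\,\mathrm{Im}(S^2E_i\to f_*\omega_{Y/\pp}^{\otimes 2})$, giving $3r_i-3$ or $2r_i+b_i-1$ according to whether $r_i\leq b_i+1$; this is exactly where the hypothesis $b_i\geq cg$ enters. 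The paper then takes the convex combination $c\cdot(\text{Noether inequality})+(1-c)\cdot(\text{Xiao inequality})$ and closes with $K_f^2\geq (2g-2)\mu_1$. Without the multiplication-map inequality your optimization has no mechanism to push the slope past $4$, no matter how sharp your estimates on the $d_i$ in the degree-two case are.

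A second, smaller error: your claim that weak positivity of $f_*\omega_{Y/\pp}$ gives $\mu_m\geq 0$ ``in every characteristic'' is false; the paper explicitly remarks that $\mu_m$ can be negative in characteristic $p$, which is why nefness of $K_f$ is a hypothesis of the theorem and why the final step uses the inequality $K_f^2\geq (2g-2)\mu_1$ (valid when $K_f$ is nef) rather than any positivity of $\mu_m$.
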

To prove this theorem, let us make some preparations first. As we are working over the base $\pp$, due to Grothendieck, we have $$f_*\omega_{Y/\pp}=\mathop{\oplus}\limits_{i=1}^m \sO(\mu_j)^{n_j}, \mu_1>\mu_2> \cdots >\mu_m .$$ By construction $E_i=\mathop{\oplus}\limits_{j=1}^{i}\sO(\mu_j)^{n_j}$ gives the Harder-Narasimhan filtration and $r_i:=\mathrm{rank}(E_i)=\sum\limits_{j=1}^in_i$.  Also we have $$\chi_f=\sum\limits_{j=1}^m \mu_i\cdot n_i=\sum\limits_{j=1}^m r_i(\mu_i-\mu_{i+1}),$$ here $\mu_{m+1}=0$.

We denote by $\sL_i:=(\mathrm{Im}(f^*E_i \hookrightarrow f^*f_*\omega_{Y/\pp} \to \omega_{Y/\pp}))^{\vee \vee}$ the  invertible coherent sheaf and $d_i:=\sL_i\cdot F,i=1,...,n$, here $F$ is a general closed fibre of $f$. As $\sL_i\otimes f^*\sO(-\mu_i)$ is generically globally generated by construction, it is nef. So Xiao's approach (cf. {\it e.g.} \cite[Prop.~2.4]{L-Z18} or \cite{S-S-Z18}) now gives
\begin{align}
\label{Xiao's approach of slope inequality} K^2_f&\geq \sum_{i=1}^m(d_i+d_{i+1})(\mu_i-\mu_{i+1}),\\
\label{Xiao's approach of slope inequality II}K^2_f&\geq (2g-2)(\mu_1+\mu_m),\\
\label{Xiao's approach of slope inequality III}K_f^2&\geq (2g-2)\mu_1
\end{align}
Here $d_{m+1}:=2g-2$.
\begin{rmk}
In positive characteristic, $\mu_m=\mu_m-\mu_{m+1}$ may be negative. So unless we know  $K_f$ is nef in a priori, \cite[Prop.~2.4]{L-Z18} works only if $i_k=m$. Namely, the last inequality  $K_f^2\geq (2g-2)\mu_1$ can fail if $K_f$ is not nef.
\end{rmk}
Next, since $f$ is non-hyperelliptic, the second multiplicative map  $$\varrho: S^2f_*\omega_{Y/C} \to f_*\omega_{Y/C}^{\otimes 2}$$ is generically surjective by Max Noether's theorem.
\begin{lem}
We have $$K_f^2=\deg(f_*\omega_{Y/\pp}^{\otimes 2})-\chi_f+l,$$
where $l:=\dim_\sk (R^1f_*\sO_X)_{\mathrm{tor}}$. In particular, we have
\begin{equation}\label{equ: K_f via second}
K^2_{f}\ge  \deg\sF-\chi_f.
\end{equation}
Here $\sF:=\mathrm{Im}(\varrho)$.
\end{lem}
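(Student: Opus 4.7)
The plan is to combine Riemann--Roch on the surface $Y$ with a careful Leray spectral sequence analysis on $\pp$, taking care to track the torsion of $R^1f_*\sO_Y$, which is the source of the correction term $l$.

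First, Riemann--Roch on $Y$ applied to the line bundle $\sO_Y(2K_f)$, using $K_Y=K_f+f^*K_{\pp}$, $(f^*K_{\pp})^2=0$, and $K_Y\cdot f^*K_{\pp}=-2(2g-2)$, yields
$$\chi(Y,\omega_{Y/\pp}^{\otimes 2})=\chi(\sO_Y)+K_f^2+4g-4.$$
In parallel, the Leray spectral sequence on $\pp$, combined with the vanishing $R^1f_*\omega_{Y/\pp}^{\otimes 2}=0$ (which follows from relative Grothendieck--Serre duality and the fibrewise vanishing $H^0(F,\omega_F^{-1})=0$ coming from $g\ge 2$) and the rank identity $\mathrm{rank}(f_*\omega_{Y/\pp}^{\otimes 2})=3g-3$, gives
$$\chi(Y,\omega_{Y/\pp}^{\otimes 2})=\deg f_*\omega_{Y/\pp}^{\otimes 2}+(3g-3).$$

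The crucial auxiliary ingredient is the \emph{modified Mumford formula}
$$\chi_f=\chi(\sO_Y)+g-1+l,$$
which I would establish from the Leray sequence for $\sO_Y$: writing $R^1f_*\sO_Y$ as an extension of its locally free quotient $\sG$ of rank $g$ (the generic rank, given by cohomology and base change applied in the top relative degree) by its torsion subsheaf of length $l$, relative Grothendieck--Serre duality identifies $\sG\cong (f_*\omega_{Y/\pp})^{\vee}$, so $\deg\sG=-\chi_f$; substituting into $\chi(\sO_Y)=1-\chi(R^1f_*\sO_Y)=1-(l+\deg\sG+g)$ yields the formula. Inserting the modified Mumford formula into the equality of the two displayed expressions for $\chi(Y,\omega_{Y/\pp}^{\otimes 2})$ produces the first assertion $K_f^2=\deg f_*\omega_{Y/\pp}^{\otimes 2}-\chi_f+l$.

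For the inequality, Max Noether's theorem, cited just above the lemma, makes $\varrho:S^2f_*\omega_{Y/\pp}\to f_*\omega_{Y/\pp}^{\otimes 2}$ generically surjective, so $\sF=\mathrm{Im}(\varrho)$ is a torsion-free, hence locally free, subsheaf of $f_*\omega_{Y/\pp}^{\otimes 2}$ on $\pp$ of the same rank $3g-3$; therefore $\deg\sF\le \deg f_*\omega_{Y/\pp}^{\otimes 2}$, and combined with $l\ge 0$ this gives the desired $K_f^2\ge \deg\sF-\chi_f$. The main technical obstacle will be the torsion bookkeeping in positive characteristic, specifically identifying the free part of $R^1f_*\sO_Y$ with $(f_*\omega_{Y/\pp})^{\vee}$ and verifying $R^1f_*\omega_{Y/\pp}^{\otimes 2}=0$; both steps I would handle by applying relative Grothendieck--Serre duality to the relevant complexes on $\pp$ and splitting off torsion from torsion-free parts.
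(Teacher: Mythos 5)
Your proposal is correct and follows essentially the same route as the paper: Riemann--Roch on $Y$ for $\omega_{Y/\pp}^{\otimes 2}$, the pushforward computations $\chi(\omega_{Y/\pp}^{\otimes 2})=\deg f_*\omega_{Y/\pp}^{\otimes 2}+3(g-1)$ and $\chi(\sO_Y)=\chi_f-(g-1)-l$ (your ``modified Mumford formula'', with the torsion of $R^1f_*\sO_Y$ accounting for $l$ via relative duality), and then the comparison $\deg\sF\le\deg f_*\omega_{Y/\pp}^{\otimes 2}$ for the inequality. You merely spell out the duality and torsion bookkeeping that the paper leaves implicit.
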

\begin{proof}
Note that we have $R^1f_*\omega_{Y/\pp}^{\otimes 2}=0$, so
\begin{align*}
\chi(\sO_Y)&=\chi(\sO_{\pp})-\chi(R^1f_*\sO_Y)=\chi_f-(g-1)\chi(\sO_{\pp})- l\\
\chi(\omega_{Y/\pp}^{\otimes 2})&=\chi(f_*\omega_{Y/\pp}^{\otimes 2})=\deg f_*\omega_{Y/\pp}^{\otimes 2}+3(g-1)\chi(\sO_{\pp}).
\end{align*}
On the other hand, we have by Riemann-Roch formula
\begin{equation*}
\chi(\omega_{Y/\pp}^{\otimes 2})-\chi(\sO_Y)=K_f^2+4(g-1)\chi(\sO_{\pp}).
\end{equation*}
We are done by a simple calculation.
\end{proof}
To approach the slope inequality desired, it suffices to work out a lower bound of $\deg \sF$. Let
\begin{equation}\label{1}
0:=\mathcal{F}_0\subseteq \mathcal{F}_1\subseteq \mathcal{F}_2\subseteq \cdots \subseteq \mathcal{F}_{m-1}\subseteq \mathcal{F}_m:=\sF
\end{equation} be the filtration of $\sF$ defined as $\sF_i:=\mathrm{Im}(\varrho: S^2(E_i)\to f_*\omega^{\otimes 2}_{Y/\pp})$. Since  $\sF_i/\sF_{i+1}$ is a quotient sheaf of $S^2E_i$, its slope is at least $2\mu_i$. So we have
\begin{equation}\label{2}
\mathrm{deg}(\sF)\geq 2\sum\limits_{i=1}^m  (\mathrm{rk}(\sF_{i})-\mathrm{rk}(\sF_{i-1}))\mu_i =2\sum\limits_{i=1}^m \mathrm{rk}(\sF_i)(\mu_i-\mu_{i+1}).
\end{equation}
Now we shall turn to study the rank of each $\sF_i$. The next lemma gives a lower bound of the rank of $\sF_i$. Recall that $b_i$ by definition (at the beginning of this section) is the genus of the image $\phi_i(F)$ for a general fibre $F$.
\begin{lem}[Clifford plus theorem,  \protect{\cite[\S~III.2]{A-C-G-H}} or \protect{\cite[\S~1 ]{Harris81}}] For each $1\leq i\leq m$, we have
$$\mathrm{rk}(\mathcal{F}_i)\geq
\left\{\begin{array}{cl}
3r_i-3,& \,\,\,\, \text{if}\,\, r_i\leq b_i+1; \\
2r_i+b_i-1,& \,\,\,\, \text{if}\,\, r_i\geq b_i+2
\end{array}
\right.
$$
In particular, if $\xi_i$ is a birational morphism, then $\mathrm{rk}(\mathcal{F}_i)\geq 3r_i-3$.
\end{lem}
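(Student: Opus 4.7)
The lemma is intrinsic to a general closed fibre $F$ of $f$: every sheaf on $\pp$ involved is torsion-free, so its rank can be read off from the restriction to a general $F$. Setting $V := \mathrm{Im}\bigl(E_i|_F \to H^0(F, \omega_F)\bigr)$, which has dimension $r_i$ by construction, we get
$$\mathrm{rk}(\mathcal{F}_i) \;=\; \dim\mathrm{Im}\bigl(S^2 V \to H^0(F, \omega_F^{\otimes 2})\bigr).$$
The image curve $C' := \phi_i(F) \subseteq \mathbb{P}(V^*)$ is by construction nondegenerate of arithmetic genus $b_i$, and with $L := \mathcal{O}_{C'}(1)$ the above map factors as $S^2 V \to H^0(C', L^{\otimes 2}) \hookrightarrow H^0(F, \omega_F^{\otimes 2})$, the second arrow being injective because $\phi_i^* L$ sits inside $\omega_F$ as an invertible subsheaf (namely the subsheaf of $\omega_F$ generated by $V$) and $\phi_i\colon F\to C'$ is surjective. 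It therefore suffices to bound $\dim\mathrm{Im}\bigl(S^2 V \to H^0(C', L^{\otimes 2})\bigr)$ from below.

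This reduces to a classical Castelnuovo/Clifford-plus type bound for nondegenerate curves in projective space, for which I would follow the strategy in \cite[\S III.2]{A-C-G-H} and \cite[\S 1]{Harris81}. Pick a general pencil $\Lambda = \langle s_0, s_1\rangle \subseteq V$ whose restriction to $C'$ is base-point-free. The base-point-free pencil trick gives
$$0 \to H^0(C', \mathcal{O}_{C'}) \to \Lambda \otimes H^0(C', L) \to H^0(C', L^{\otimes 2}),$$
whose image already contributes $2r_i - 1$ independent elements to $\mathrm{Im}(S^2 V \to H^0(L^{\otimes 2}))$. The remaining $r_i - 2$ non-pencil directions in $V$ either yield independent contributions (when $L$ is sufficiently non-special on $C'$, corresponding exactly to the range $r_i \leq b_i + 1$) and give the bound $3r_i - 3$; or they are constrained by Riemann--Roch on $C'$, which forces $h^0(C', L^{\otimes 2}) \geq 2r_i + b_i - 1$ in the range $r_i \geq b_i + 2$. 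The ``in particular'' clause then follows at once, since birational $\phi_i$ forces $b_i \geq g \geq r_i$, placing us automatically in the first range.

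\noindent\textbf{The main obstacle} is a characteristic-$p$ one. First, a general pencil $\Lambda \subseteq V$ can cut out a non-reduced divisor on $F$ when $\phi_i$ factors through the relative Frobenius, and the standard base-point-free pencil trick breaks down. I would circumvent this by invoking Theorem~\ref{thm: Bertini}, which guarantees reducedness of a general hyperplane pullback as long as $\phi_i$ does not factor through Frobenius; in the Frobenius-factoring case one descends along Frobenius to a morphism of strictly smaller $r$ and inducts. Second, the image curve $C'$ may be highly singular, in which case one must work with its dualizing sheaf $\omega_{C'}$ and use $p_a(C') \geq g(\widetilde{C'})$ to relate arithmetic and geometric genera; this is routine but must be tracked carefully when invoking Riemann--Roch on $C'$.
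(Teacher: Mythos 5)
The paper offers no proof of this lemma at all: it is quoted as the classical ``Clifford plus'' bound with a pointer to \cite[\S~III.2]{A-C-G-H} and \cite[\S~1]{Harris81}. Your reduction to a general fibre $F$, the identification $\mathrm{rk}(\sF_i)=\dim\mathrm{Im}(S^2V\to H^0(F,\omega_F^{\otimes 2}))$, the factorization through $H^0(C',L^{\otimes 2})$, and the ``in particular'' clause (via $r_i\le g\le b_i$ when $\phi_i$ is birational) are all consistent with the route those references take, so to that extent you are on the intended path.

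However, the core of your sketch --- getting from the pencil-trick bound $2r_i-1$ up to $3r_i-3$, resp.\ $2r_i+b_i-1$ --- has genuine gaps. First, in the range $r_i\ge b_i+2$ you invoke Riemann--Roch to get $h^0(C',L^{\otimes 2})\ge 2r_i+b_i-1$; this bounds the \emph{target} of $S^2V\to H^0(C',L^{\otimes 2})$, not its image, so it proves nothing about $\mathrm{rk}(\sF_i)$. Second, your dichotomy is stated backwards: it is $r_i\ge b_i+2$ that forces $L$ to be nonspecial (Clifford rules out $h^0\ge b_i+2$ for a special bundle) with $\deg C'\ge r_i+b_i-1$, while $r_i\le b_i+1$ is the range where $L$ may be special and Clifford gives $\deg C'\ge 2r_i-2$. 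Third, the extra $r_i-2$ sections do not come from ``non-pencil directions'' added to the pencil trick; the classical argument restricts to a general hyperplane section $\Gamma\subset C'$, writes $\dim\mathrm{Im}(S^2V)\ge r_i+(\text{number of conditions }\Gamma\text{ imposes on quadrics of }\mathbb{P}^{r_i-2})$, and applies the General Position Theorem to get $\ge r_i+\min(\deg C',\,2r_i-3)$, after which the two cases fall out of the degree bounds above. That general position statement for a general hyperplane section is exactly the characteristic-$p$-sensitive ingredient (uniform/general position can fail in positive characteristic, cf.\ Rathmann), whereas the issues you flag --- non-reduced pencil members and Frobenius factorization --- concern a different lemma; your proposed ``descend along Frobenius and induct'' fix also does not obviously preserve the pair $(r_i,b_i)$ appearing in the bound. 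As written, the proposal would not compile into a proof of either case of the inequality.
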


\begin{proof}[Proof of Theorem~\ref{thm: slope with gonality p}] We prove here only the second part, the proof of the first part is similar and one can also refer to \cite[\S~2,3]{L-Z17}.
For the second part of Theorem~\ref{thm: slope with gonality p}, we shall take $\ell'=\mathrm{min}\{i| \gamma_i=2\}$ and $\ell''=\mathrm{min}\{\ell'\le i<\ell| r_i\ge b_i+2\}$ (when $i$ ranges in $[\ell', \ell-1], b_i$ decreases and $r_i$ increases).
Then from (\ref{equ: K_f via second}), (\ref{2}) and the lemma above, we have following inequality:
\begin{equation}\label{equation K2 refined}
\begin{split}
K_f^2&\geq 2\sum\limits_{i=1}^{\ell'-1}(2r_i-1)(\mu_i-\mu_{i+1})+2\sum\limits_{i=\ell'}^{\ell''-1}(3r_i-3)(\mu_i-\mu_{i+1}) \\
        &+ 2\sum\limits_{i=\ell''}^{\ell-1}(2r_i+b_i-1)(\mu_i-\mu_{i+1})+2\sum\limits_{i=\ell}^{m}(3r_i-3)(\mu_i-\mu_{i+1})-\chi_f\\
        &\geq  \sum\limits_{i=1}^{\ell'-1}(3r_i-2)(\mu_i-\mu_{i+1})+\sum\limits_{i=\ell'}^{\ell''-1}(5r_i-6)(\mu_i-\mu_{i+1}) \\
        &+ \sum\limits_{i=\ell''}^{\ell-1}((3+2c)r_i-2)(\mu_i-\mu_{i+1})+\sum\limits_{i=\ell}^{m}(5r_i-6)(\mu_i-\mu_{i+1})
\end{split}
\end{equation}
On the other hand, by (\ref{Xiao's approach of slope inequality}) we have another inequality:
\begin{equation}\label{dffjska}
\begin{split}
K_f^2&\geq \sum\limits_{i=1}^m(d_i+d_{i+1})(\mu_i-\mu_{i+1})\\
        &\geq  \sum\limits_{i=1}^{\ell'-1}(5r_i-4)(\mu_i-\mu_{i+1})+\sum\limits_{i=\ell'}^{\ell''-1}(4r_i-4)(\mu_i-\mu_{i+1}) \\
        &+ \sum\limits_{i=\ell''}^{\ell-1}((4+2c)r_i-2)(\mu_i-\mu_{i+1})+\sum\limits_{i=\ell}^{m}(4r_i-2)(\mu_i-\mu_{i+1})-2\mu_m
\end{split}
\end{equation}
Here we note  that
\begin{itemize}
\item in case $r_1=1$, we have $(d_1+d_2)=d_2\ge (5r_1-4)=1$;

\item for $i<\ell$ and $r_i>1$, we have $d_i=\gamma_i\cdot d_i'$ where $d_i'=\deg \phi_i(F)$. The Castelnuovo's bound and Clifford's theorem then gives that:
$
d_i\ge \left\{\begin{array}{cc}
3(r_i-1),  &i<\ell';\\
2(2r_i-3), &\ell'\le i<\ell'';\\
2(r_i+b_i-1), &\ell''\le i<\ell;\\
2(r_i-1),   &\ell\le i\le m-1.
\end{array} \right.
$
\end{itemize}
Finally, $c\cdot$(\ref{equation K2 refined})$+(1-c)\cdot (\ref{dffjska})$  gives:
\begin{align*}
K_f^2&\ge (4+c)\sum\limits_{i=1}^{m}r_i(\mu_i-\mu_{i+1})+ (1-3c)\sum\limits_{i=1}^{\ell'-1}r_i(\mu_i-\mu_{i+1})\\
     &-(4-2c)\sum\limits_{i=1}^{\ell'-1}(\mu_i-\mu_{i+1})-(4+2c)\sum\limits_{i=\ell'}^{\ell''}(\mu_i-\mu_{i+1})\\
     &-(1+c)\sum\limits_{i=\ell''}^{\ell'-1}(\mu_i-\mu_{i+1})-(2+4c)\sum\limits_{i=\ell}^{m-1}(\mu_i-\mu_{i+1})-(4+2c)\mu_m\\
     &\geq (4+c)\chi_f-(4+2c)(\mu_1-\mu_m)-(4+2c)\mu_m\\
     &=(4+c)\chi_f-(4+2c)\mu_1.
\end{align*}Now the above inequality along with (\ref{Xiao's approach of slope inequality III}) gives  $K_f^2\ge (4+c)\dfrac{g-1}{g+1+c}\chi_f$, which implies our desired inequality whenever $\chi_f$ is positive or not.
\end{proof}

\section{Severi inequality in positive characteristics}\label{Section: Severi inequality}
In this section, as a preparation for studying surfaces on the Severi line, we shall recall Pardini's elegant covering trick of Severi inequality. From now on until the end of this paper, we let $X$ be a minimal algebraic surface of general type over $\sk$ and such that $X$ has maximal Albanese dimension. Namely, the Albanese morphism $a_X: X\to \mathrm{Alb}_X$ is generically finite onto its image. For our purpose, some of the original argument of Pardini is changed.

First we take a sufficiently very ample line bundle $L$ on $\mathrm{Alb}_X$ and denote by $L_X:=a_X^*L$.
For any integer $n\geq 2$ satisfying $(n,p)=1$, let $\mu_n: \mathrm{Alb}_X\rightarrow \mathrm{Alb}_X$ be the multiplication by $n$ morphism on $\mathrm{Alb}_X$ and $X_n$ be the base change as follows.
$$
\xymatrix{
  X_n \ar[d]_{a_n} \ar[r]^{\nu_n} & X \ar[d]^{a_X} \\
  \mathrm{Alb}_X \ar[r]^{\mu_n} & \mathrm{Alb}_X   }$$
Then $X_n$ is again a minimal surface of general type with maximal Albanese dimension and $$K^2_{X_n}=n^{2q}K_X^2,\,\,\,\,\,\, \chi(\sO_{X_n})=n^{2q}\chi(\sO_X).$$
Here $q:=\dim \mathrm{Alb}_X$.
As it is well known on Abelian varieties that  $$\mu^{\ast}_n(L)\sim_{\mathrm{num}}n^2L,$$ we have $L_{X_n}:=a_n^{\ast}(L)\sim_{\mathrm{num}}n^{-2}\nu_n^{\ast}(L_X)$.
Therefore $$L_{X_n}^2=n^{2q-4}L_X^2,\,\,\,\,\,\,\, K_{X_n}\cdot L_{X_n}=n^{2q-2}K_X\cdot L_X.$$
\begin{lem}[Bertini]
Let $B$ be a general member of $|L|$, then $a_n^*B$ is irreducible and reduced.
\end{lem}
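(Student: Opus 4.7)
My plan is to apply the Bertini-type Theorem~\ref{thm: Bertini} to the composition $\varphi_n:=\varphi_L\circ a_n\colon X_n\to \pr$, where $\varphi_L\colon \mathrm{Alb}_X\hookrightarrow \pr$ is the closed embedding given by $|L|$. Hyperplane sections of $\pr$ pull back along $\varphi_L$ to members of $|L|$, so $\varphi_n^*(H)=a_n^*(B)$ for the corresponding $B\in|L|$, and it suffices to verify the hypotheses of that theorem. If $\varphi_n$ turns out to be linearly degenerate (which can occur only when $\dim\mathrm{Alb}_X>2$), I would replace $\pr$ by the linear span of $\varphi_n(X_n)$, since a general $B\in|L|$ still cuts out a general hyperplane of this span.

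The dimension hypothesis is immediate: from $\mu_n\circ a_n=a_X\circ\nu_n$ with $\nu_n$ surjective, one has $a_n(X_n)=\mu_n^{-1}(a_X(X))$, which is $2$-dimensional by the maximal Albanese dimension assumption.

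The main work will be to verify that $\varphi_n$ does not factor through the relative Frobenius $F_{X_n/\sk}$. Since $\varphi_L$ is a closed embedding, this amounts to showing that $a_n$ does not factor through $F_{X_n/\sk}$. I would reduce this to the corresponding statement for $a_X$, using that $\mu_n$ is \'etale (because $(n,p)=1$), and so is its base change $\nu_n$, together with the elementary fact that for any finite separable field extension $L/K$ in characteristic $p$ one has $L^p\cap K=K^p$. Under the hypothesis $a_n^*\kappa(\mathrm{Alb}_X)\subseteq\kappa(X_n)^p$, applying this fact to the separable extensions $\kappa(X_n)/\nu_n^*\kappa(X)$ and $\kappa(\mathrm{Alb}_X)/\mu_n^*\kappa(\mathrm{Alb}_X)$ and exploiting the identity $a_n^*\mu_n^*=\nu_n^*a_X^*$, one deduces $a_X^*\kappa(\mathrm{Alb}_X)\subseteq\kappa(X)^p$, i.e., that $a_X$ factors through $F_{X/\sk}$.

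Finally, I would rule out such a factorization of $a_X$ using the universal property of the Albanese. If $a_X=g\circ F_{X/\sk}$ with $g\colon X^{(p)}\to\mathrm{Alb}_X$, then $g$ sends the basepoint $F_{X/\sk}(x_0)$ to $0$, so the universal property of $\mathrm{Alb}_{X^{(p)}}$ yields a homomorphism $h\colon\mathrm{Alb}_{X^{(p)}}\to\mathrm{Alb}_X$ with $g=h\circ a_{X^{(p)}}$. Under the canonical identifications $\mathrm{Alb}_{X^{(p)}}=(\mathrm{Alb}_X)^{(p)}$ and $a_{X^{(p)}}=(a_X)^{(p)}$, naturality of relative Frobenius gives $(a_X)^{(p)}\circ F_{X/\sk}=F_{\mathrm{Alb}_X/\sk}\circ a_X$, hence $a_X=h\circ F_{\mathrm{Alb}_X/\sk}\circ a_X$. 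Since $a_X(X)$ generates $\mathrm{Alb}_X$ as an abelian variety and $h\circ F_{\mathrm{Alb}_X/\sk}$ is a group homomorphism, this forces $h\circ F_{\mathrm{Alb}_X/\sk}=\mathrm{id}_{\mathrm{Alb}_X}$, splitting the isogeny $F_{\mathrm{Alb}_X/\sk}$ of degree $p^{\dim\mathrm{Alb}_X}\geq p^2>1$, which is absurd. The main subtlety in this plan is the bookkeeping for the Frobenius-factorization descent along the \'etale base change $\nu_n$, where one must cleanly isolate the \'etale/separable part from the Frobenius part.
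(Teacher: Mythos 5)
Your proposal is correct and follows essentially the same route as the paper: the paper's proof is a one-line application of Theorem~\ref{thm: Bertini}, asserting that $a_n$ cannot factor through the relative Frobenius because $a_X$ does not. You simply supply the details the paper leaves implicit (the \'etale descent of a Frobenius factorization along $\nu_n$ and the Albanese universal-property argument ruling out a factorization of $a_X$), and these are sound.
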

\begin{proof}
See Theorem~\ref{thm: Bertini}, and note that $a_n$ does not factors through the relative Fronbenius morphism since the Albanese morphism $a_X: X\to \mathrm{Alb}_X$ is not so automatically.
\end{proof}

\begin{cor}\label{Cor: bigstar}
There is a Zariski open dense subset $U_n(X)\subseteq |L|\times |L|$ such that for any $(B_n,B_n')\in U_n(X)$, they satisfies:

($\bigstar$) both divisors $D_n:=a_n^*B_n, D_n':=a_n^*B_n'$ are irreducible, reduced and $D_n$ intersects $D_n'$ only at their common smooth locus.
\end{cor}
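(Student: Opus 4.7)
The plan is to exhibit $U_n(X)$ as an intersection of two open dense subsets of $|L|\times|L|$: one ensuring the Bertini conditions on both factors, and one ensuring the intersection avoids the two singular loci.

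First, the preceding Bertini lemma (itself an application of Theorem~\ref{thm: Bertini}) produces an open dense $V_n\subseteq |L|$ such that $a_n^*B$ is reduced and irreducible for every $B\in V_n$. Restricting to $V_n\times V_n\subseteq|L|\times|L|$ handles the irreducibility and reducedness of both $D_n$ and $D_n'$.

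It remains to arrange $D_n\cap D_n'\subseteq \mathrm{Smooth}(D_n)\cap \mathrm{Smooth}(D_n')$. Since $X_n$ is smooth (because $\mu_n$ is étale, as $(n,p)=1$) and every $D_n$ with $B\in V_n$ is a reduced irreducible curve, $\mathrm{Sing}(D_n)$ is finite. To turn ``$D_n'$ misses the singularities of $D_n$'' into an open condition on the pair, I would introduce the universal singular locus
$$\Sigma_n:=\{(B,x)\in V_n\times X_n\mid x\in \mathrm{Sing}(a_n^*B)\},$$
which is closed with zero-dimensional fibers over $V_n$, so $\dim \Sigma_n\le \dim V_n$, together with the incidence variety
$$I_n:=\{(B,B',x)\in V_n\times V_n\times X_n\mid (B,x)\in \Sigma_n,\ a_n(x)\in B'\}.$$
Projecting $I_n\to\Sigma_n$, the fiber over $(B,x)$ is $\{B'\in V_n\mid a_n(x)\in B'\}$, cut out by the linear condition $s(a_n(x))=0$ on $s\in H^0(\mathrm{Alb}_X,L)$. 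Very ampleness of $L$ guarantees some section does not vanish at $a_n(x)$, so this is a proper hyperplane and the fiber has dimension $\dim V_n-1$. Hence $\dim I_n\le 2\dim V_n-1$, and by Chevalley its image in $V_n\times V_n$ is constructible of dimension strictly less than $2\dim V_n$, hence contained in a proper closed subvariety. The complement $U'\subseteq V_n\times V_n$ is open dense, and for $(B_n,B_n')\in U'$ the divisor $D_n'$ avoids $\mathrm{Sing}(D_n)$. Swapping the roles of $B_n$ and $B_n'$ produces a symmetric open dense $U''$, and we set $U_n(X):=U'\cap U''$.

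The substantive step is the second: for a fixed $B_n$ the avoidance assertion is immediate (a generic $B_n'\in|L|$ misses finitely many prescribed points of $\mathrm{Alb}_X$), but to obtain an open condition on the pair one needs the incidence-variety dimension count above. The inputs are exactly what is already available, namely the Bertini-type Theorem~\ref{thm: Bertini} and the very ampleness of $L$.
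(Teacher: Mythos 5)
Your proposal is correct, but it takes a genuinely different route from the paper's. The paper first secures irreducibility and reducedness via the Bertini lemma, exactly as you do, but then handles the intersection condition by a pigeonhole argument: it picks $l+1$ general members $\widetilde B_1,\dots,\widetilde B_l,\widetilde B'$ of $|L|$ with $l\ge p_a(\widetilde D')+1$, notes that $\widetilde D'=a_n^*\widetilde B'$ avoids the finitely many pairwise intersection points $\widetilde D_i\cap\widetilde D_j$ and all singular points of the $\widetilde D_i$, and that $\widetilde D'$ has at most $p_a(\widetilde D')<l$ singular points (a bound depending only on the class of $L$), so some $\widetilde D_i$ meets $\widetilde D'$ only in their common smooth locus; it then appeals, without further detail, to the principle that one good pair forces a dense open set of good pairs. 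Your argument instead runs a direct incidence-variety dimension count: the universal singular locus $\Sigma_n$ is closed with finite fibers over $V_n$, the incidence variety $I_n$ has dimension at most $2\dim V_n-1$ because very ampleness makes the condition $a_n(x)\in B'$ a proper hyperplane condition, and the complement of (the closure of) its image is the desired open dense set; symmetrizing finishes the proof. Both are valid. The paper's pigeonhole gets existence of a single good pair very cheaply, at the cost of needing the genus bound on the number of singular points and of leaving the final openness assertion implicit; your construction produces the open dense subset $U_n(X)$ in one pass and, in effect, supplies the justification for that last implicit step (indeed, since $X_n$ is proper, the image of $I_n$ is even closed, so the locus where $(\bigstar)$ fails is genuinely closed). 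Your version is the more systematic and self-contained of the two.
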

\begin{proof}
In fact, choose $l+1$ different general elements $\widetilde{B}_1,...,\widetilde{B}_l , \widetilde{B}'$ in $|L|$. Since $\widetilde{B}'$ is general, $\widetilde{D}_n':=a_n^*\widetilde{B}'$ will not pass through any point in the finite set  consisting of (a.) the intersection point of $\widetilde{D}_i:=a_n^*\widetilde{B}_i$ and $\widetilde{D}_j$ and (b.) the singular points of $\widetilde{D}_i,i=1,...,l$.

So unless $\widetilde{D}'$ contains at least $l$ singular points,  $\widetilde{D}'$ intersects with some $\widetilde{D}_i$ at their common smooth locus. Note that the number of singular points on $\widetilde{D}'$ is bounded by $p_a(\widetilde{D}')$ depending only on the divisor class $L$ not on the choice of $\widetilde{B}'$, so   taking any $l\ge  p_a(\widetilde{D}')+1$ at first, we shall take $B_n=\widetilde{B}_i$ for some $i$ and $B_n'=\widetilde{B}'$ to fulfil all conditions in ($\bigstar$).

Note that the existence of one single pair $(B_n,B_n')\in |L|\times |L|$ satisfying $(\bigstar)$ actually implies that any $(B_n,B_n')$ in an open dense Zariski subset $U_n(X)\subseteq |L|\times |L|$ fulfils $(\bigstar)$.
\end{proof}
Up to now, by choosing a general member $(B_n,B_n')\in |L|\times |L|$ such that $(\bigstar)$ holds, we construct the commutative diagram of (rational) morphisms in the following picture.
$$\xymatrix{
\widetilde{X}_n\ar[r]^\pi \ar[drr]_{\varphi_n}    & X_n\ar[r]^{a_n} \ar@{-->}[dr]^{\varphi_n} & \mathrm{Alb}_X \ar@{-->}[d]^{\iota_n}\\
&& \pp}$$
Here  $\iota_n$ is defined by the linear pencil of dimension one in $|L|$ generated by $B_n$ and $B_n'$, $\pi: \widetilde{X}_n\to X_n$  is the minimal elimination of the base points of $\varphi_n$   and by abuse of language, both the rational map $X_n\dashrightarrow \pp$ and  $\widetilde{X}_n\to \pp$ are called as $\varphi_n$. We write $p_1,...,p_r$ the intersections of $D_n$ and $D_n'$. Since $p_i$ is smooth on both $D_n$ and $D_n'$ by construction, we have the following lemma.
\begin{lem}
The dual graph of the exceptional divisors for $\pi: \widetilde{X}_n\to X_n$ above $p_i$ is a line (type $A_{m_i}$) as below:
$$
\xymatrix{
\bullet \ar@{-}[r]& \bullet \ar@{-}[r]& \cdots \ar@{-}[r]& \bullet \\
E_{i1} & E_{i2} &\cdots& E_{im_i}}
$$
Here $E_{i1}$ is the exceptional divisor obtained from the first blow-up, $E_{i2}$ is the second and so on.
\end{lem}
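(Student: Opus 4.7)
My plan is to work locally at each $p_i$ and prove the statement by induction on the intersection multiplicity $m_i := (D_n \cdot D_n')_{p_i}$. Since $p_i$ is a smooth point of both $D_n$ and $D_n'$, I can pick local coordinates $(x,y)$ at $p_i$ with $D_n$ cut out by $\{x=0\}$; the smoothness of $D_n'$ then gives, possibly after interchanging $x$ and $y$, a defining equation of the form $x=h(y)$ with $h(y)=cy^{m_i}+O(y^{m_i+1})$, $c\neq 0$. The base case $m_i=1$ is transparent: $D_n$ and $D_n'$ meet transversally at $p_i$, so a single blow-up separates their strict transforms and the exceptional locus over $p_i$ is a single vertex, which is trivially an $A_1$ chain.

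For the inductive step, I blow up $p_i$ to produce $E_{i,1}$. In one affine chart (with coordinates $(y,v)$ where $x=vy$) the strict transforms become $\widetilde{D}_n=\{v=0\}$ and $\widetilde{D}_n'=\{v=cy^{m_i-1}+O(y^{m_i})\}$, and they share the unique point $q_1=(0,0)$ on $E_{i,1}=\{y=0\}$ with local intersection multiplicity $m_i-1$; the opposite chart exhibits no further base point. Applying the inductive hypothesis to the pair $(\widetilde{D}_n,\widetilde{D}_n')$ at $q_1$, which are again two smooth curves meeting at a single point with strictly smaller multiplicity, yields an $A_{m_i-1}$ chain $E_{i,2}-E_{i,3}-\cdots-E_{i,m_i}$ arising from the remaining $m_i-1$ blow-ups in order of creation.

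The core claim to verify is that this new chain attaches to $E_{i,1}$ only through $E_{i,2}$, so that the combined graph is the desired $A_{m_i}$. For this I examine the next blow-up explicitly: after blowing up $q_1$, the new base point $q_2$ lies on $E_{i,2}$ at the common intersection of the strict transforms of $\widetilde{D}_n$ and $\widetilde{D}_n'$, while a direct chart computation shows that the proper transform of $E_{i,1}$ does \emph{not} pass through $q_2$ (it is visible only in the opposite chart of this second blow-up). Consequently every subsequent blow-up $q_j\mapsto E_{i,j+1}$ takes place in an open set disjoint from the proper transform of $E_{i,1}$, so $E_{i,1}$ remains attached only to $E_{i,2}$ in the final configuration, and the full exceptional locus above $p_i$ is indeed the chain displayed in the statement.

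The main obstacle I anticipate is precisely this chart-level bookkeeping: one must check that at \emph{every} stage the unique new base point lies on the most recently created exceptional component and stays away from all previously created ones. This is a local calculation that, while elementary, requires tracking both affine charts of each blow-up carefully and matching the two charts at the intersection of successive exceptional divisors. Once this bookkeeping is carried out, the rest of the argument is a purely formal induction.
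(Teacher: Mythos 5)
Your argument is correct. The paper itself gives no proof of this lemma --- it is stated as an immediate consequence of the fact that $p_i$ is a smooth point of both $D_n$ and $D_n'$ --- and your induction on the local intersection multiplicity $m_i=(D_n\cdot D_n')_{p_i}$, together with the chart computation showing that each new base point lies on the most recent exceptional divisor but not on the proper transform of $E_{i,1}$ (nor of any earlier component), is exactly the standard justification; note also that the formal implicit-function argument you use to write $D_n'$ as a graph $x=h(y)$ with $\mathrm{ord}_y h=m_i$ is valid in any characteristic, so nothing breaks over $\sk$ of characteristic $p>0$.
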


\begin{prop}\label{prop: for varphis_n}
We have
\begin{enumerate}
\item $K_{\widetilde{X}_n}=\pi^*K_{X_n}+\sum\limits_{i=1}^r (E_{i1}+2E_{i2}+\cdots+ m_iE_{im_i})$.

\item $E_{ik},k=1,...m_i-1$ are all $(-2)$-curve and $E_{im_i}$ is a $(-1)$-curve.

\item the strict transform $\widetilde{D}_n$ of $D_n$ is a fibre of $\varphi_n$. In particular, the fibration $\varphi_n$ has connected fibres.

\item $E_{im_i}$ is a section of $\varphi_n$ and $E_{ik},k=1,...,m_i-1$ are all vertical with respect to $\varphi_n$. In particular, $\varphi_n$ has no multiple fibres.

\item $K_{\varphi_n}$ is nef.
\end{enumerate}
\end{prop}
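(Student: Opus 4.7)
I would proceed by an explicit local analysis of the resolution $\pi$ around each base point $p_i$. The key preliminary observation is that resolving $\varphi_n$ at $p_i$ takes exactly $m_i := (D_n \cdot D_n')_{p_i}$ successive blow-ups: since $D_n$ and $D_n'$ are both smooth at $p_i$ with intersection multiplicity $m_i \ge 2$, they share a common tangent direction, and each blow-up drops the intersection multiplicity of the strict transforms by one without separating their tangent until step $m_i$. The successive blow-up centers $q_k$ therefore lie on the latest exceptional $E_{ik}$ and not on any earlier $\widetilde{E}_{ij}$, producing the $A_{m_i}$-chain dual graph asserted in (2).

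Parts (1) and (2) then follow by induction on $k$. The relations $K_{Y_{k+1}} = \sigma_{k+1}^* K_{Y_k} + E_{i,k+1}$ and $\sigma_{k+1}^* E_{ik} = \widetilde{E}_{ik} + E_{i,k+1}$ inductively force the coefficient of $E_{ij}$ in $K_{\widetilde{X}_n} - \pi^* K_{X_n}$ to equal $j$, giving (1). Each $E_{ij}$ is created as a $(-1)$-curve and, iff $j < m_i$, undergoes exactly one further blow-up at $q_j$, yielding $E_{ij}^2 = -2$ for $j < m_i$ and $E_{im_i}^2 = -1$.

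For (3) and (4), the same induction yields $\pi^* D_n = \widetilde{D}_n + \sum_{i,j} j E_{ij}$ and similarly for $D_n'$, so the fixed part of $\pi^*|D_n, D_n'|$ is $F := \sum_{i,j} j E_{ij}$ and the movable part is a base-point-free pencil whose members include the disjoint divisors $\widetilde{D}_n$ and $\widetilde{D}_n'$. Thus $\widetilde{D}_n$ is a fiber of $\varphi_n$, and since it is irreducible (by Corollary \ref{Cor: bigstar}), Stein factorization forces all fibers to be connected. Computing $(\pi^* D_n - F) \cdot E_{ij}$ with the intersection numbers from (2) gives $1$ for $j = m_i$ and $0$ for $j < m_i$, so $E_{im_i}$ is a section while the $E_{ij}$ with $j < m_i$ are vertical. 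Since the section $E_{im_i}$ meets every fiber with intersection number one, each fiber must contain a component of multiplicity one, ruling out multiple fibers.

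For (5), I check $K_{\varphi_n} = K_{\widetilde{X}_n} + 2\widetilde{D}_n$ against every irreducible curve $C \subset \widetilde{X}_n$. If $C$ is horizontal then $\pi^* K_{X_n} \cdot C \ge 0$ by the nefness of $K_{X_n}$, the exceptional contribution $\sum_{i,j} j E_{ij} \cdot C$ is nonnegative, and $2\widetilde{D}_n \cdot C \ge 2$. If $C$ is a fiber component, adjunction together with $C^2 \le 0$ gives $K_{\widetilde{X}_n} \cdot C \ge 0$ except for $(-1)$-curves, but the only $(-1)$-curve on $\widetilde{X}_n$ is the section $E_{im_i}$, already handled in the horizontal case. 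The subtlest point is the opening local analysis: while the common-tangent plus chain-length computation is standard in characteristic zero, in positive characteristic one must check that the smoothness of $D_n, D_n'$ at $p_i$ guaranteed by $(\bigstar)$ still produces the same behaviour. This works because the drop-by-one of intersection multiplicity under a blow-up at a smooth-smooth common tangent is a statement purely about completed local rings and local equations, and is insensitive to characteristic.
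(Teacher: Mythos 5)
Your proposal follows essentially the same route as the paper: the $A_{m_i}$-chain structure of the exceptional locus (which the paper isolates as a separate lemma and you prove in more detail via the local intersection-multiplicity computation), the formula $\pi^*D_n=\widetilde{D}_n+\sum_j jE_{ij}$ giving (1)--(4), and a curve-by-curve check of nefness for (5). Parts (1)--(4) are fine.

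There is one slip in your treatment of (5). You claim that for every horizontal curve $C$ the exceptional contribution $\sum_{i,j} jE_{ij}\cdot C$ is nonnegative; this fails precisely for the sections $C=E_{im_i}$, where
$\sum_{j} jE_{ij}\cdot E_{im_i}=(m_i-1)\cdot 1+m_i\cdot(-1)=-1$. You then dismiss the $(-1)$-curves in the vertical case by saying they were ``already handled in the horizontal case,'' so the curves $E_{im_i}$ are never actually verified --- the argument is circular at exactly this point. The fix is immediate and is what the paper does: $K_{\varphi_n}\cdot E_{im_i}=K_{\widetilde X_n}\cdot E_{im_i}+2F\cdot E_{im_i}=-1+2=1\ge 0$, using that $E_{im_i}$ is a $(-1)$-curve and a section. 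With that one computation inserted, your proof is complete and agrees with the paper's.
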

\begin{proof}
(1), (2) follows from the previous lemma.

(3) We have $\pi^*D=\widetilde{D}+\sum\limits_{i=1}^r (E_{i1}+2E_{i2}+\cdots+ m_iE_{im_i})$ and $\pi^*D'=\widetilde{D}'+\sum\limits_{i=1}^r (E_{i1}+2E_{i2}+\cdots+ m_iE_{im_i})$, so the strict transform $\widetilde{D}$ is the fibre.

(4) Note that $E_{im_i}\cdot \widetilde{D}=1$, so $E_{im_i}$ is a section. On the other hand, $E_{ik}\cdot \widetilde{D}=0$, so they are vertical.

(5) Since $K_{\widetilde{X}_n}=\pi^*K_{X_n}+\sum\limits_{i=1}^r (E_{i1}+2E_{i2}+\cdots+ m_iE_{im_i})$, we have $$K_{\varphi_n}=\pi^*K_{X_n}+\sum\limits_{i=1}^r (E_{i1}+2E_{i2}+\cdots+ m_iE_{im_i})+2F$$ with $F$ being a fibre of $\varphi_n$. It suffices to show $K_{\varphi_n} \cdot E_{ik}\ge 0$. In fact, we have
\begin{itemize}
\item $K_{\varphi_n}\cdot E_{ik}=K_{\widetilde{X}_n}\cdot E_{ik}=0, k=1,...,m_i-1$;

\item $K_{\varphi_n}\cdot E_{im_i}=K_{\widetilde{X}_n}\cdot E_{im_i}+2E_{im_i}\cdot F=-1+2=1$.
\end{itemize}
\end{proof}

As a consequence, the fibration $\varphi_n$ has the following invariants:
 \begin{equation}\label{equ: formula of varphi_n}
 g_n=\dfrac{K_{X_n}\cdot D+D^2}{2}+1=\dfrac{n^{2q-2}K_X\cdot L_X+ n^{2q-4}L_X^2}{2}+1.
\end{equation}
 $$\aligned K_{\varphi_n}^2&=K_{X_n}^2-D^2+4(K_{X_n}\cdot D+D^2)\\
           &=n^{2q}K_X^2+3n^{2q-4}L_X^2+4n^{2q-2} K_X\cdot L_X.\\ \chi_{\varphi_{n}}&=\chi(\sO_{X_n})+ \dfrac{K_{X_n}\cdot D+D^2}{2}\\&= n^{2q}\chi(\sO_X)+\dfrac{n^{2q-2}K_X\cdot L_X+ n^{2q-4}L_X^2}{2}\endaligned$$
It should be noted here
\begin{itemize}
\item $\varphi_n$ can  inevitably have singular generic geometric fibre since the Bertini theorem is not as strong as in characteristic $0$;

\item $\chi(\sO_X)$, $\chi(\sO_{X_n})$ (and hence $\chi_{\varphi_n}, n\gg 0$) are all positive integers by   \cite{Shepherd-Barron91};
\end{itemize}
So we obtain a sequence of fibred surfaces
$\varphi_n:\widetilde{X}_n\rightarrow \mathbb{P}^1$ for any $(n,p)=1$ with slopes
\begin{equation}\label{equ: formula of slope of varphin}
\begin{split}
\lambda_{\varphi_n}=\frac{K^2_{\varphi_n}}{\chi_{\varphi_n}}=\frac{2K_X^2+6n^{-2}K_X\cdot L_X +8n^{-4}H^2}{2\chi(\sO_X)+n^{-2}K_X\cdot H+n^{-4}L_X^2}\rightarrow \frac{K_X^2}{\chi(\sO_X)}.
\end{split}
\end{equation}
Note that for the fibrations $\varphi_n:\widetilde{X}_n\rightarrow \mathbb{P}^1$, all the quotients $E_i/E_{i+1}$ appearing in the Harder-Narasimhan filtration of $(\varphi_n)_{\ast}\omega_{\widetilde{X}_n/\mathbb{P}^1}$ are also strongly semistable. Thus Xiao's approach for slope inequalities works for $\mathrm{char}.(\mathbf{k})>0$  without any modification (cf. \cite{S-S-Z18}) and we have $$\lambda_{\varphi_n}\ge 4-\dfrac{4}{g_n},$$
which and (\ref{equ: formula of slope of varphin}) clearly implies $K_X^2\ge 4\chi(\sO_X)$ by letting $n\to +\infty$.

\section{Refined slopes of $X$}\label{Sec: 5}
In this section, we shall first define a constant $c(X,L)$ playing the role of $c$ in Theorem~\ref{thm: slope with gonality p}(2) for $\varphi_n$ when $n\gg 0$.
Then, we adjust the morphisms $\varphi_n$ carefully to make this constant $c(X,L)$ actually work.
Finally, by applying Theorem~\ref{thm: slope with gonality p}(2), we show that $X$ is on the Severi line only if it is a double cover of an Abelian surface.

\subsection{The constant $c(X,L)$}
First note there are only finitely many rational double covers $\pi_i: X\dashrightarrow Y_i$ relative to $a_X$ upto rational equivalence $i=1,...,s$:
$$
\xymatrix{
X\ar@{-->}[rr]^{\pi_i} \ar[rd]_{a_X}&& Y_i\ar[dl]\\
&\mathrm{Alb}_X&}
$$ where $Y_i$ is a minimal model.
In fact, the separable rational double covers relative to $a_X$ is in $1-1$ correspondence with the set of involutions of $X$ relative to $\mathrm{Alb}_X$, hence must be finite. The inseparable rational double cover relative to $a_X$ exists only if $a_X$ is inseparable itself. However, when $a_X$ is inseparable, the relative tangent sheaf $\sT_{X/\mathrm{Alb}_X}$ is a $1$-foliation (cf. Definition~\ref{defn: 1-foliation}) of rank $1$ since the relative Frobenius morphism of $X$ can never factor through the Albanese morphism. By construction the inseparable double cover $X\to X/\sT_{X/\mathrm{Alb}_X}$ (cf. Theorem~\ref{thm: 1-1 on foliation}) must factor through any other inseparable rational map $X\dashrightarrow T$ relative to $\mathrm{Alb}_X$. Therefore $X\to X/\sT_{X/\mathrm{Alb}_X}$  is the unique inseparable double cover relative to $\mathrm{Alb}_X$ upto rational equivalence.

For each $\pi_i: X\dashrightarrow Y_i, i=1,...,s$, we denote by $$c_i(X,L):=\dfrac{K_{Y_i}\cdot L_{Y_i}}{K_X\cdot L_X}, \ \  \text{here} \ \ L_{Y_i}:=h_i^*L.$$
If $p=2$ and $a_X$ is separable, there is furthermore a $c_0(X,L)$ as follows
$$c_0:=\dfrac{(2K_X-R_X)\cdot L_X}{2K_X\cdot L_X},$$
where $R_X:=c_1(\mathrm{det}(\Omega_{X/\mathrm{Alb}_X}))$.  To avoid inaccuracy, we note that $R_X$ is actually the divisor $\sum\limits_{j}\alpha_jP_j$ where
\begin{itemize}
\item $P_j$ run through all prime divisors contained in the support of $\Omega_{X/\mathrm{Alb}_X}$.

\item let $\xi_j$ be the generic point of $P_j$, then $\alpha_j:=\mathrm{legnth}(\Omega_{X/\mathrm{Alb}_X})_{\xi_j}$.
\end{itemize}

\begin{defn}\label{defn: c}
We take $c(X,L):=\mathrm{min}\{c_i(X,L)\}$ for all possible $c_i(X,L)$ defined as above and define $c(X,L)=1/2$ if $a_X$ is inseparable and no $\pi_i$ exists.
\end{defn}
\begin{prop}\label{prop: characterization of c}
The number $c(X,L)=0$ if and only if $q=2$ (recall that $q$ is the dimension of $\mathrm{Alb}_X$) and $a_X: X\to \mathrm{Alb}_X$ is a double cover.
\end{prop}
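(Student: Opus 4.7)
The plan is to prove the two implications of the proposition separately, with the reverse implication requiring the substantive work. For the easy direction ($\Leftarrow$), if $q=2$ and $a_X$ is itself a double cover, I observe that $\mathrm{Alb}_X$ is already a minimal abelian surface, so $a_X$ itself qualifies as one of the rational double covers in the list $\{\pi_1,\ldots,\pi_s\}$ (with $Y_i=\mathrm{Alb}_X$ and $h_i=\mathrm{id}$). Since $K_{\mathrm{Alb}_X}=0$, the corresponding $c_i(X,L)$ vanishes, hence $c(X,L)=0$.

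For the forward direction ($\Rightarrow$), suppose $c(X,L)=0$. My first task is to localize where the vanishing can come from. The convention value $1/2$ is nonzero. For $c_0$ (defined only when $p=2$ and $a_X$ is separable), I plan to show $c_0\ge 1/2$ via a Chern class argument: since $\Omega_{\mathrm{Alb}_X}$ is trivial, the four-term exact sequence
\[
0\to \sK\to a_X^*\Omega_{\mathrm{Alb}_X}\to \Omega_X\to \Omega_{X/\mathrm{Alb}_X}\to 0
\]
involves the trivial bundle $a_X^*\Omega_{\mathrm{Alb}_X}\cong\sO_X^q$, and yields $R_X\equiv K_X+c_1(\sK)$ numerically. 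Since the saturation $\overline{\sK}$ of $\sK$ inside $\sO_X^q$ is a locally free subsheaf of a trivial bundle, its determinant embeds (via Pl\"ucker) into $\wedge^r\sO_X^q\cong\sO_X^{\binom{q}{r}}$, so $\det\overline{\sK}\cong\sO_X(-D)$ for an effective $D$. Combined with the torsion quotient $\overline{\sK}/\sK$ and the fact that $L_X$ is nef, this gives $c_1(\sK)\cdot L_X\le 0$, whence $R_X\cdot L_X\le K_X\cdot L_X$ and $c_0\ge 1/2>0$. So $c(X,L)=0$ must be realized by $c_i(X,L)=0$ for some $i\in\{1,\ldots,s\}$.

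For such an $i$, using $K_X\cdot L_X>0$ (intersection of two nef and big divisors on a surface), the hypothesis $c_i=0$ forces $K_{Y_i}\cdot L_{Y_i}=0$. The next step is to identify $Y_i$ as an abelian surface. I would note: $L_{Y_i}=h_i^*L$ is nef and big (as $h_i$ is generically finite); $Y_i$ has maximal Albanese dimension via $h_i$ to an abelian variety, hence $\kappa(Y_i)\ge 0$ and minimality gives $K_{Y_i}$ nef. The Hodge index theorem applied to $L_{Y_i}^2>0$ and $K_{Y_i}\cdot L_{Y_i}=0$ yields $K_{Y_i}^2\le 0$, and nefness reverses this to $K_{Y_i}\equiv 0$, so $\kappa(Y_i)=0$. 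Together with $q(Y_i)\ge 2$, the classification of algebraic surfaces in arbitrary characteristic forces $Y_i$ to be an abelian surface (all exotic $\kappa=0$ surfaces in char $2,3$, namely (quasi-)bielliptic and (quasi-)hyperelliptic, satisfy $q\le 1$).

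To conclude, since $h_i$ is generically finite from the $2$-dimensional abelian $Y_i$ to $\mathrm{Alb}_X$, we immediately get $q=2$. The rational map $\pi_i:X\dashrightarrow Y_i$ extends to a morphism by Weil's extension theorem (as $Y_i$ is abelian), and by the universal property of $\mathrm{Alb}_X$ it factors as $\pi_i=\alpha\circ a_X$ for a morphism $\alpha:\mathrm{Alb}_X\to Y_i$. The identities $h_i\circ\pi_i=a_X$ and $\pi_i=\alpha\circ a_X$, combined with the dominance of $a_X$ and $\pi_i$, force $h_i\circ\alpha=\mathrm{id}_{\mathrm{Alb}_X}$ and $\alpha\circ h_i=\mathrm{id}_{Y_i}$; so $h_i$ is an isomorphism and $\deg(a_X)=\deg(\pi_i)\cdot\deg(h_i)=2$. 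The main obstacle I anticipate is the Chern class argument ruling out $c_0=0$, which hinges on the subtle but key observation that subsheaves of trivial bundles have non-positive first Chern class against nef classes.
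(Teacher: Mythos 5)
Your proposal is correct and follows essentially the same route as the paper: the easy direction via $K_{\mathrm{Alb}_X}=0$, the bound $c_0\ge 1/2$ from the cotangent exact sequence (your observation $c_1(\sK)\cdot L_X\le 0$ for the kernel is exactly dual to the paper's observation that $c_1$ of the image of $a_X^*\Omega_{\mathrm{Alb}_X}\to\Omega_X$ is effective, since these first Chern classes are negatives of each other), and the identification of $Y_i$ as an abelian surface via nefness of $K_{Y_i}$ and the Hodge index theorem. The extra details you supply at the end (Weil extension, the universal property of the Albanese forcing $h_i$ to be an isomorphism) merely make explicit what the paper leaves implicit.
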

\begin{proof}
The 'if' part is clear.

For 'only if' part, first note that for $i>0$, $c_i(X,L)=0$ if and only if $K_{Y_i}\cdot L_{Y_i}=0$. As $L_{Y_i}$ is nef, big and $K_{Y_i}$ is nef, we only have $K_{Y_i}\equiv_{\mathrm{num}}0$ by Hodge index theorem. Since $Y_i$ has maximal Albanese dimension, this holds only if $Y_{i}$ is an Abelian surface. Namely we get $q=2$ and $a_X: X\to \mathrm{Alb}_X$ is a double cover.

Then for $c_0(X,L)$, when it is defined we have an exact sequence:
$$
a_X^*\Omega_{\mathrm{Alb}_X/\sk}\to \Omega_{X/\sk}\to \Omega_{X/\mathrm{Alb}_X}\to 0
$$
Denote by $\sB:=\mathrm{Im}(a_X^*\Omega_{\mathrm{Alb}_X/\sk}\to \Omega_{X/\sk})$, its double dual $(\sB)^{\vee \vee}$ is an rank $2$ locally free sheaf generically globally generated. In particular, $c_1(\sB)=c_1(\wedge^2(\sB)^{\vee \vee})$ is effective.  By the above exact sequence we have $K_X=R_X+c_1(\sB)\ge R_X$. As a consequence $2K_X-R_X\ge K_X$, and in particular we have $c_0(X,L)\ge \dfrac{1}{2}$ as $L_X$ is nef.
\end{proof}

\subsection{Refinements of $\varphi_n$}
Recall that in the construction of $\varphi_n$ in the previous section, we actually need to choose a general member $(B_n, B_n')\in |L|\times |L|$ meeting the condition $(\bigstar)$ in Corollary~\ref{Cor: bigstar}.  Namely, our $\varphi_n$ is actually defined after choosing a $\Xi\in U_n(X)$, so we shall write $\varphi_{n, \Xi}$ instead now. We have shown in the same corollary, such a choice can range in an open dense subset $U_n(X)\subset |L|\times |L|$.

\begin{lem}\label{Lem: non-hyperelliptic}
For any $n\gg 0$ and $(n,p)=1$, there is a Zariski open dense subset $V_n(X)\subseteq U_n(X)$ such that $\varphi_{n,\Xi}$ is non-hyperelliptic for all $\Xi\in V_n(X)$.
\end{lem}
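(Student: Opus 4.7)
The plan is to argue by contradiction: assume that $\varphi_{n,\Xi}$ is hyperelliptic for every $\Xi$ in a Zariski dense subset $U_n^{\mathrm{hyp}}\subseteq U_n(X)$. From these hyperelliptic fibrations we will extract a single birational involution of $X_n$ over the Albanese morphism $a_n$ whose quotient is a uniruled surface admitting a generically finite map onto a $2$-dimensional subvariety of the Abelian variety $\mathrm{Alb}_X$. Since every morphism $\mathbb{P}^1\to\mathrm{Alb}_X$ is constant, subvarieties of $\mathrm{Alb}_X$ contain no rational curves, producing the desired contradiction.

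Choose $n\gg 0$ so that $g_n\ge 3$. For each $\Xi\in U_n^{\mathrm{hyp}}$ the unique hyperelliptic involution of the smooth, genus-$g_n$ generic fiber of $\varphi_{n,\Xi}$ extends to a rational involution of $\widetilde X_n$ over $\mathbb{P}^1$, and descends to a birational involution $\sigma_{n,\Xi}\in\mathrm{Bir}(X_n)$. As $X_n$ is of general type, $\mathrm{Bir}(X_n)$ is finite. Partitioning $U_n^{\mathrm{hyp}}$ by the value of $\sigma_{n,\Xi}$ in this finite set and noting that the Zariski closure of a finite union is the union of Zariski closures, we obtain $\sigma_n\in\mathrm{Bir}(X_n)$ such that $U_n^{\sigma_n}:=\{\Xi\in U_n^{\mathrm{hyp}}:\sigma_{n,\Xi}=\sigma_n\}$ is Zariski dense in $U_n(X)$.

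Next we verify that $\sigma_n$ lies over $a_n$. For each $\Xi=(B_n,B_n')\in U_n^{\sigma_n}$, $\sigma_n$ preserves every fiber of the pencil $\varphi_{n,\Xi}$, so $\sigma_n(a_n^*B)=a_n^*B$ for every $B$ in the pencil generated by $B_n,B_n'$. The locus $\Sigma:=\{B\in|L|:\sigma_n(a_n^*B)=a_n^*B\}$ is Zariski closed in $|L|$ and contains every $B$ appearing in some pencil parameterised by $U_n^{\sigma_n}$; these $B$ form a Zariski dense subset of $|L|$, hence $\Sigma=|L|$. For any closed point $x\in X_n$ and any $B\in|L|$ with $a_n(x)\in B$, the equality $\sigma_n(a_n^*B)=a_n^*B$ gives $a_n(\sigma_n(x))\in B$; very ampleness of $L$ yields $\{a_n(x)\}=\bigcap_{B\in|L|,\,B\ni a_n(x)}B$, forcing $a_n(\sigma_n(x))=a_n(x)$. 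Thus $a_n$ factors as $b_n\circ q$, where $q\colon X_n\dashrightarrow Y_n:=X_n/\sigma_n$ is the quotient and $b_n\colon Y_n\to\mathrm{Alb}_X$ is generically finite of degree $\tfrac{1}{2}\deg a_n$.

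For general $B\in|L|$ the curve $a_n^*B$ is hyperelliptic with $\sigma_n|_{a_n^*B}$ as its hyperelliptic involution, so $b_n^*B=q(a_n^*B)\subset Y_n$ is a rational curve; varying $B$, these rational curves sweep out $Y_n$, making $Y_n$ uniruled. However, $b_n(Y_n)=a_n(X_n)$ is a $2$-dimensional subvariety of $\mathrm{Alb}_X$ and therefore is not uniruled. A generically finite surjection from a uniruled surface $Y_n$ onto $b_n(Y_n)$ would force $b_n(Y_n)$ itself to be uniruled (a general ruling of $Y_n$ cannot be contracted to a point without $b_n$ factoring through a curve, contradicting generic finiteness), a contradiction. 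The main technical obstacle is to glue the pointwise hyperelliptic involutions into a single element $\sigma_n\in\mathrm{Bir}(X_n)$; this is achieved by the finiteness of $\mathrm{Bir}(X_n)$ together with the irreducibility of $U_n(X)$, after which the rest follows from basic properties of Abelian varieties. Finally, since non-hyperellipticity of the generic fiber is an open condition in $\Xi$, the locus $V_n(X)\subseteq U_n(X)$ on which $\varphi_{n,\Xi}$ is non-hyperelliptic is Zariski open and dense.
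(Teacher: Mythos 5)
Your overall strategy coincides with the paper's: assume hyperellipticity on a dense set of $\Xi$, extract a hyperelliptic involution $\sigma_{n,\Xi}$ of $X_n$ relative to $\varphi_{n,\Xi}$, use finiteness of the automorphism/birational group of the minimal surface of general type $X_n$ to pigeonhole a single involution $\sigma_n$ working for a dense (hence, by closedness, all) $\Xi$, deduce that $\sigma_n$ is relative to $a_n$, and contradict the absence of rational curves on subvarieties of $\mathrm{Alb}_X$. This is exactly the content of the paper's Lemma~\ref{Lem: non-hyperelliptic} together with Lemma~\ref{lem:4}, and that part of your write-up is fine.

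There is, however, one genuine gap, and it sits precisely at the characteristic~$2$ point the paper is designed to handle. You begin with ``the unique hyperelliptic involution of the smooth, genus-$g_n$ generic fiber,'' but in characteristic $2$ a hyperelliptic generic fiber over the imperfect field $\sk(t)$ need not carry a hyperelliptic involution at all: the canonical degree-$2$ map onto a rational curve may a priori be \emph{purely inseparable} (the base $\sk(t)$ is not perfect, so the usual argument that a degree-$2$ purely inseparable cover of $\mathbb{P}^1$ is rational does not apply directly), and then there is no involution to extract and your entire construction of $\sigma_{n,\Xi}$ does not start. The paper disposes of this case first: if $\varphi_{n,\Xi}$ were inseparably hyperelliptic, the (normalized geometric) fibers would be rational curves, and these fibers dominate members of $|L|$ inside $\mathrm{Alb}_X$, contradicting maximal Albanese dimension. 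You need this sentence before you may speak of ``the'' hyperelliptic involution. A second, minor, inaccuracy: the generic geometric fiber of $\varphi_{n,\Xi}$ need not be smooth in positive characteristic (the paper points this out explicitly); the generic fiber over $\sk(t)$ is only regular, so the involution should be produced from the relative canonical map (or the regular generic fiber) rather than from a ``smooth generic fiber.'' With these two repairs your argument matches the paper's proof.
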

\begin{proof}
Note that $\varphi_{n,\Xi}$ can never be inseparably hyperelliptic (namely, the canonical double cover is inseparable). Since otherwise the fibres of $\varphi_{n,\Xi}$ are rational, a contradiction to the maximal Albanese dimension assumption. So whenever $\varphi_{n,\Xi}$ is hyperelliptic, it gives an involution $\sigma_{n,\Xi}$ on $X_n$ relative to $\varphi_{n,\Xi}$ as $X_n$ is the minimal model.
On the other hand, the involution $\sigma_{n,\Xi}$ can clearly not be relative to $a_n$ by the maximal Albanese dimension assumption. We are done by the next lemma.
\end{proof}
\begin{lem}\label{lem:4}
Suppose for a dense subset $\Lambda_n \subseteq U_n(X)$ that each $\Xi\in \Lambda_n$ is equipped with an involution $\sigma_{n,\Xi}$ of $X_n$ relative to $\varphi_{n,\Xi}$, then there is a dense subset $\Lambda_n'\subseteq \Lambda_n$ so that $\sigma_{n,\Xi}$ is relative to $a_n$ for each $\Xi\in \Lambda_n'$.
\end{lem}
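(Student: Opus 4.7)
The plan is to reduce, via the finiteness of $\mathrm{Aut}(X_n)$, to a single involution $\sigma$ that recurs for $\Xi$ in a Zariski-dense sub-family $\Lambda_n'\subseteq \Lambda_n$, and then to exploit the freedom to vary $\Xi$ over $\Lambda_n'$ to promote relativeness with respect to $\varphi_{n,\Xi}$ to relativeness with respect to $a_n$.

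First I would invoke the classical fact that the automorphism group of a minimal surface of general type is finite (in all characteristics; e.g.\ $\mathrm{Aut}(X_n)$ acts faithfully on $H^0(mK_{X_n})$ once the $m$-canonical map is birational onto its image), so that there are only finitely many involutions of $X_n$. The assignment $\Xi\mapsto \sigma_{n,\Xi}$ partitions $\Lambda_n$ into finitely many pieces $\Lambda_{n,\sigma}:=\{\,\Xi\in\Lambda_n\mid \sigma_{n,\Xi}=\sigma\,\}$. Since $\Lambda_n$ is dense in the irreducible variety $|L|\times|L|$, the finite union $\bigcup_\sigma \overline{\Lambda_{n,\sigma}}=|L|\times|L|$ forces at least one closure to fill the whole space. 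Let $\Lambda_n'$ be such a piece, with common involution $\sigma$.

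Next, for every $\Xi=(B_n,B_n')\in\Lambda_n'$, the relation that $\sigma$ is relative to $\varphi_{n,\Xi}=\iota_n\circ a_n$ unwinds to the identity of rational maps $\iota_n\circ(a_n\circ\sigma)=\iota_n\circ a_n$. Pick a general closed point $x\in X_n$ and set $y_1:=a_n(x)$, $y_2:=a_n(\sigma(x))$; then $\iota_n(y_1)=\iota_n(y_2)$ for every $\Xi\in\Lambda_n'$. Choosing sections $s,s'\in H^0(\mathrm{Alb}_X,L)$ with zero divisors $B_n,B_n'$, this equality is exactly the vanishing of the bilinear expression $s(y_1)s'(y_2)-s(y_2)s'(y_1)$ (well-defined up to the trivialisations of $L$ at $y_1$ and $y_2$). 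As it vanishes on a Zariski-dense subset of $H^0(L)\times H^0(L)$, it vanishes identically, and therefore the evaluation functionals $s\mapsto s(y_1)$ and $s\mapsto s(y_2)$ are proportional on all of $H^0(L)$. By the very ampleness of $L$ the morphism $\mathrm{Alb}_X\to\mathbb{P}(H^0(L)^*)$ is a closed embedding, so $y_1=y_2$. This yields $a_n\circ\sigma=a_n$, as required.

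The main obstacle is the first step: with no continuity or algebraicity assumption on $\Xi\mapsto\sigma_{n,\Xi}$, collapsing the family of involutions to a single $\sigma$ on a dense piece relies both on the finiteness of $\mathrm{Aut}(X_n)$ and on the irreducibility of $|L|\times|L|$. Once a common $\sigma$ has been isolated, the bilinear/very-ample-embedding argument is routine and characteristic-free.
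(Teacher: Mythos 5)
Your proposal is correct and follows essentially the same route as the paper: finiteness of $\mathrm{Aut}(X_n)$, pigeonhole over the irreducible parameter space $|L|\times|L|$ to isolate a single involution $\sigma$ recurring on a dense piece, and then promotion of relativeness from the pencil maps to $a_n$. Your explicit bilinear computation $s(y_1)s'(y_2)-s(y_2)s'(y_1)$ together with the very-ampleness of $L$ is just a fleshed-out verification of the two assertions the paper states without proof (that $\{\Xi:\sigma\text{ is relative to }\varphi_{n,\Xi}\}$ is Zariski closed, and that relativeness to all the $\varphi_{n,\Xi}$ forces relativeness to $a_n$), so the argument is the same in substance.
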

\begin{proof}
First, there are only finitely many involutions on $X_n$. Next note that for each involution $\sigma$, the set $\{\Xi\in U_n(X)|\, \sigma \ \text{is relative to} \ \varphi_{n,\Xi}\}$ is a Zariski closed subset.  And finally, $\sigma$ is relative to $a_n$ if it is relative to any $\varphi_{n,\Xi},\Xi \in U_n(X)$. Our lemma follows then.
\end{proof}
Now we choose $\Xi_n\in V_n(X)$ and therefore the associated fibration $\varphi_{n,\Xi}: \widetilde{X}_n\to \pp$ is not hyperelliptic. Applying the construction at the beginning of Section~\ref{Sec: slope inequality} to $\varphi_{n,\Xi}$, by taking the Harder-Narasimhan filtration of $(\varphi_{n,\Xi})_*\omega_{\widetilde X_n/\pp}$, we obtain a sequence of rational maps as in Section~\ref{Sec: slope inequality}: $$\phi_{n,\Xi,i}: \widetilde{X}_n \dashrightarrow Z_{n,\Xi,i}\subseteq \mathbb{P}(E_i).$$

With the help of (\ref{equ: formula of slope of varphin}) and Theorem~\ref{thm: slope with gonality p}(1) we have:
\begin{cor}\label{cor:existence of double cover}
Suppose $K_X^2 <\dfrac{9}{2}\chi(\sO_X)$, then for all $n\gg 0, (n,p)=1$ and $\Xi_n\in V_n(X)$,  some of the morphisms $\phi_{n,\Xi,i}: \widetilde{X}_n\dashrightarrow Z_{n,\Xi,i}$ are rational double cover.
\end{cor}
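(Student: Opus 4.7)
The plan is to argue by contradiction, combining Theorem~\ref{thm: slope with gonality p}(1) (with $\delta=2$) with the limiting procedure of Section~\ref{Section: Severi inequality}. Suppose the conclusion fails; then there exists an infinite sequence $n_k\to\infty$ with $(n_k,p)=1$, together with choices $\Xi_{n_k}\in V_{n_k}(X)$, such that for each $k$ no $\phi_{n_k,\Xi,i}:\widetilde X_{n_k}\dashrightarrow Z_{n_k,\Xi,i}$ is a rational double cover. Equivalently, the degrees appearing in the corresponding Harder--Narasimhan filtration satisfy $\gamma_i=1$ or $\gamma_i>2$ for every $i$, which is precisely the hypothesis of Theorem~\ref{thm: slope with gonality p}(1) taken with $\delta=2$.

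I would then check that the remaining hypotheses of that theorem hold for $\varphi_{n_k,\Xi}$: non-hyperellipticity is guaranteed by the choice $\Xi_{n_k}\in V_{n_k}(X)$ (Lemma~\ref{Lem: non-hyperelliptic}), while $K_{\varphi_{n_k}}$ is nef by Proposition~\ref{prop: for varphis_n}(5). Theorem~\ref{thm: slope with gonality p}(1) therefore yields
\begin{equation*}
\lambda_{\varphi_{n_k}}\ \ge\ \frac{9}{2}\cdot\frac{g_{n_k}-1}{g_{n_k}+2}.
\end{equation*}

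The final step is a limit computation using the explicit formulas from Section~\ref{Section: Severi inequality}. The genus formula~(\ref{equ: formula of varphi_n}) shows $g_{n_k}\to\infty$ (since $q\ge 2$ and $K_X\cdot L_X,\,L_X^2>0$), while the slope formula~(\ref{equ: formula of slope of varphin}) gives $\lambda_{\varphi_{n_k}}\to K_X^2/\chi(\sO_X)$. Passing to the limit in the inequality above yields $K_X^2/\chi(\sO_X)\ge 9/2$, contradicting the standing assumption $K_X^2<\tfrac{9}{2}\chi(\sO_X)$, which proves the corollary.

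I do not expect a genuine obstacle: the argument is the non-hyperelliptic refinement of Pardini's original contradiction-by-slope-inequality scheme, and all the required ingredients (the characteristic-$p$ slope inequality of Theorem~\ref{thm: slope with gonality p}(1), the relative minimality and nefness of $K_{\varphi_n}$ from Proposition~\ref{prop: for varphis_n}, and the asymptotic behaviour of $g_n$ and $\lambda_{\varphi_n}$) have all been established in the preceding sections. The only point demanding care is the elementary translation ``no $\phi_{n,\Xi,i}$ is a double cover'' $\Leftrightarrow$ ``$\gamma_i\ne 2$ for every $i$,'' combined with the fact that the genericity subset $V_{n_k}(X)$ already secures the non-hyperelliptic hypothesis needed for the slope estimate.
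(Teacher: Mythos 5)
Your argument is correct and is essentially the paper's own: the authors likewise deduce the corollary by combining Theorem~\ref{thm: slope with gonality p}(1) with $\delta=2$ (whose hypotheses hold thanks to Lemma~\ref{Lem: non-hyperelliptic} and Proposition~\ref{prop: for varphis_n}(5)) with the asymptotics $g_n\to\infty$ and $\lambda_{\varphi_n}\to K_X^2/\chi(\sO_X)$ from (\ref{equ: formula of varphi_n}) and (\ref{equ: formula of slope of varphin}). Your contradiction framing is just a rephrasing of the same one-line deduction, and the negation of the quantifiers is handled correctly.
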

Fixing $n,\Xi$, there may be more than one $\phi_{n,\Xi,i}: X_n\dashrightarrow Z_{n,\Xi,i}$ of degree $2$, but they are all birationally equivalent. We then take $Z_{n,\Xi}$ to be the minimal model of all such $Z_{n,\Xi,i}$ and $\phi_{n,\Xi}: \widetilde{X}_n \dashrightarrow Z_{n,\Xi}$
$$
\xymatrix{
\widetilde{X}_n \ar[drr]_{\varphi_{n,\Xi}}\ar@{-->}[rr]^{\phi_{n,\Xi}} && Z_{n,\Xi}\ar@{-->}[d]^{\tau_{n,\Xi}} && Z'_{n,\Xi}\ar[ll]_{\vartheta_{n,\Xi}}\ar[dll]^{\tau'_{n,\Xi}} \\
&&\pp&
}$$

\begin{defn}
Taking $\vartheta_{n,\Xi}: Z'_{n,\Xi}\to Z_{n,\Xi}$ to be the minimal resolution of the indeterminancy of $\tau_{n,\Xi}$, then we denote by $g'_{n,\Xi}$ the fibre genus of $\tau_{n,\Xi}': Z'_{n,\Xi}\to\pp$.
\end{defn}
It is clear that $g'_{n,\Xi}$ is no larger than the fibre arithmetic genus of  $Z_{n,\Xi,i}\to \pp$. In the spirit of Theorem~\ref{thm: slope with gonality p}(2), to prove Theorem~{\ref{Thm: slope of X with c} below in this section, it suffices to show that $\varlimsup\limits_{n\to \infty}{\dfrac{g'_{n,\Xi}}{g_n}}\ge c(X,L)$ for a suitable choice of $\Xi_n$ to defining $\varphi_{n,\Xi_n}$ for each $n\gg 0, (n,p)=1$.

Now we start to adjust the choice of $\Xi_n$. Take $W_n(X):=V_n(X)\cap \bigcap \limits_{i=1}^s U_n(Y_i)$ ($Y_i$ is defined in the previous subsection and $U_n(Y_i)$ is defined similar to $U_n(X)$), it is a Zariski open dense subset of $|L|\times |L|$.
\begin{lem}[Lemma~3.2, \cite{L-Z17} ]\label{Lem: choice of XI}
Assume $K_X^2<\dfrac{9}{2}\chi(\sO_X)$, then for each $n\gg 0, (n,p)=1$, either \begin{enumerate}[i)]
\item there is a $\Xi\in W_n(X)$ such that $\phi_{n,\Xi}$ is separable and relative to $a_n$; or

\item there is an open dense subset $W_n(X)'\subset W_n(X)$ such that $\phi_{n,\Xi}$ is inseparable for any $\Xi\in W_n(X)'$.
\end{enumerate}
\end{lem}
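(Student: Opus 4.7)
The plan is to partition $W_n(X)$ according to whether the rational double cover $\phi_{n,\Xi}$---whose existence under $K_X^2<\frac{9}{2}\chi(\sO_X)$ is guaranteed by Corollary~\ref{cor:existence of double cover}---is separable or inseparable, and to show that at least one of the two loci is dense in $W_n(X)$.

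Setting $S_n := \{\Xi \in W_n(X)\mid \phi_{n,\Xi} \text{ is separable}\}$ and $I_n := W_n(X)\setminus S_n$, I would first argue that these are constructible subsets of $W_n(X)$. Indeed, the construction of $\varphi_{n,\Xi}$, and then of the Harder--Narasimhan filtration of $(\varphi_{n,\Xi})_{*}\omega_{\widetilde X_n/\pp}$ and the induced factorizations $\phi_{n,\Xi,i}$, can be performed relatively over the parameter space; by upper-semicontinuity and a standard stratification argument, we may shrink $W_n(X)$ to a dense open subset on which the ranks $r_i$, degrees $\gamma_i$ and fibre genera $b_i$ are all constant, so that the degree-$2$ factor $\phi_{n,\Xi}$ is extracted uniformly. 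Separability of such a uniform family of degree-$2$ covers is then an open condition. Consequently either $I_n$ contains a non-empty open subset of $W_n(X)$---in which case that open subset may be taken as $W_n(X)'$ and realises alternative (ii)---or $S_n$ itself is dense in $W_n(X)$.

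In the latter case, for any $\Xi\in S_n$ the separable degree-$2$ rational map $\phi_{n,\Xi}\colon\widetilde X_n \dashrightarrow Z_{n,\Xi}$ induces a birational involution of $\widetilde X_n$. Because $X_n$ is minimal of general type, any birational self-map descends to a biregular self-map of $X_n$; we thus obtain an honest involution $\sigma_{n,\Xi}$ of $X_n$, relative to $\varphi_{n,\Xi}$ by construction. This is precisely the hypothesis of Lemma~\ref{lem:4} with $\Lambda_n = S_n$, and that lemma yields a dense $\Lambda_n'\subseteq S_n$ on which $\sigma_{n,\Xi}$, and therefore $\phi_{n,\Xi}$ itself, is relative to $a_n$. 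Any $\Xi\in \Lambda_n'$ then witnesses alternative (i).

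The main obstacle is the constructibility step: one must check that the relative Harder--Narasimhan filtration of $(\varphi_{n,\Xi})_{*}\omega_{\widetilde X_n/\pp}$ behaves coherently enough as $\Xi$ varies to make ``$\phi_{n,\Xi}$ is separable'' genuinely constructible---and, after stratification, open---on $W_n(X)$. The separable-to-inseparable transition is subtle in characteristic $2$ since an inseparable degree-$2$ cover corresponds, via Theorem~\ref{thm: 1-1 on foliation}, to the relative tangent foliation $\sT_{\widetilde X_n/Z_{n,\Xi}}$ rather than to an involution, and one has to rule out jumps in behaviour within a single connected stratum. Once this uniformity is in hand, the dichotomy between (i) and (ii) follows immediately from the two arguments above.
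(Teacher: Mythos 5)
Your argument is correct and follows essentially the same route as the paper: the dichotomy on whether the separable locus is dense in $W_n(X)$, with Lemma~\ref{lem:4} handling the dense case via the involution of the minimal model $X_n$ induced by a separable degree-$2$ map $\phi_{n,\Xi}$. The constructibility/stratification step you flag as the main obstacle is in fact unnecessary: since $W_n(X)$ is a dense open subset of the irreducible variety $|L|\times|L|$, either the separable locus is dense (giving case (i) through Lemma~\ref{lem:4}) or its closure omits a non-empty, hence automatically dense, open subset on which $\phi_{n,\Xi}$ is inseparable (giving case (ii)).
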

\begin{proof}
Following from Lemma~\ref{lem:4}, case (i) happens if there is a dense subset of $W_n(X)$ such that $\phi_{n,\Xi}$ is separable for all $\Xi$ in this subset. Clearly, if this is not the case, we obtain (ii).
\end{proof}
We shall now choose $\Xi_n$ for each $n$ as follows.
\begin{itemize}
\item If  (i) in the above lemma happens, we choose any $\Xi_n$ making  $\phi_{n,\Xi_n}$  separable and relative to $a_n$.

\item If  (i) fails and $a_n: X\to \mathrm{Alb}_X$ is inseparable, we choose any $\Xi_n\in W_n(X)'$ making $\phi_{n,\Xi_n}$ inseparable.

\item If (i) fails and $a_n: X\to \mathrm{Alb}_X$ is inseparable, we choose  $\Xi$ as in the following lemma.
\end{itemize}
\begin{lem}\label{lemma: g'}
Suppose $a_n: X\to \mathrm{Alb}_X$ is separable and  $\phi_{n,\Xi}$ is inseparable for any $\Xi$ contained in an open dense subset $W_n(X)'\subset W_n(X)$, then there is a suitable choice of $\Xi_n$ such that  $$\dim_{\sk(t)}(\Omega_{\widetilde{X}_n/\pp,\mathrm{tor}})_\eta\le  c_1(\Omega_{X_n/\mathrm{Alb}_X})\cdot L_{X_n}.$$
for the associated fibration $\varphi_{n,\Xi_n}: \widetilde X_n \to \pp$. Here $\eta:=\Spec(\sk(t))$ is the generic point of $\pp$.
\end{lem}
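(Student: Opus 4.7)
The plan is to pick a sufficiently generic $\Xi_n \in W_n(X)'$ and read off the torsion bound from the standard exact sequence of K\"ahler differentials, using the separability of $a_n$. First I would choose $\Xi_n = (B_n, B_n') \in W_n(X)'$ so that the generic member $C_\eta$ of the pencil on $\mathrm{Alb}_X$ is smooth over $\sk(t)$ (possible by Bertini for smoothness on the abelian variety $\mathrm{Alb}_X$), and so that the pullback $a_n^*C_\eta$ is a divisor on $X_n$ containing no component of $\mathrm{Supp}(\Omega_{X_n/\mathrm{Alb}_X})$, which holds for a general pencil since $L$ is very ample. By Theorem~\ref{thm: Bertini} combined with the separability of $a_n$, the generic fibre $F_\eta$ of $\varphi_{n,\Xi_n}$ is an integral curve over $\sk(t)$, and the restriction $f \colon F_\eta \to C_\eta$ of $a_n$ is finite and generically \'etale.

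Let $\sigma \colon P \to \mathrm{Alb}_X$ denote the blow-up of the base locus of the pencil, $\pi_P \colon P \to \pp$ the resulting fibration, and $\tilde a_n \colon \widetilde X_n \to P$ the induced morphism over $\pp$. Since the base points of the pencil are chosen generically on $\mathrm{Alb}_X$, they avoid the ramification locus of $a_n$, so $\tilde a_n$ is \'etale near the exceptional divisors and $\Omega_{\widetilde X_n/P}$ vanishes there. The fundamental exact sequence
\begin{equation*}
\tilde a_n^* \Omega_{P/\pp} \to \Omega_{\widetilde X_n/\pp} \to \Omega_{\widetilde X_n/P} \to 0
\end{equation*}
restricted to the generic fibre becomes
\begin{equation*}
f^* \Omega_{C_\eta/\sk(t)} \to \Omega_{F_\eta/\sk(t)} \to \Omega_{F_\eta/C_\eta} \to 0.
\end{equation*}
Since $C_\eta$ is smooth, $f^*\Omega_{C_\eta/\sk(t)}$ is a line bundle on the integral curve $F_\eta$, hence torsion-free; since $f$ is generically \'etale, the first arrow is an isomorphism at the generic point of $F_\eta$, and therefore injective. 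A short diagram chase then yields the injection $(\Omega_{F_\eta/\sk(t)})_{\mathrm{tor}} \hookrightarrow \Omega_{F_\eta/C_\eta}$. By base change for K\"ahler differentials, $\Omega_{F_\eta/C_\eta}$ is identified with the restriction of the torsion sheaf $\Omega_{X_n/\mathrm{Alb}_X}$ to the divisor $a_n^* C_\eta$, whose $\sk(t)$-length equals $c_1(\Omega_{X_n/\mathrm{Alb}_X}) \cdot L_{X_n}$ by a short Riemann--Roch calculation on the smooth surface $X_n$ (valid since $a_n^* C_\eta$ contains no component of the support).

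The main obstacle will be the careful Bertini-type justification in positive characteristic that the required genericity conditions on $\Xi_n$ -- smoothness of $C_\eta$, integrality of $F_\eta$, \'etaleness of $a_n$ at the base points, and the absence of divisorial components of $\mathrm{Supp}(\Omega_{X_n/\mathrm{Alb}_X})$ inside $a_n^* C_\eta$ -- can be simultaneously realised within $W_n(X)'$; each is an open non-empty condition, but their common satisfaction requires invoking Theorem~\ref{thm: Bertini} together with the separability of $a_n$. The inseparability hypothesis on $\phi_{n,\Xi}$ plays no direct role in establishing the inequality, but ensures that $W_n(X)'$ is a genuine refinement placing us in case (iii) of the preceding choice scheme, so that the resulting bound feeds correctly into the later slope analysis.
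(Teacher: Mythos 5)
Your strategy---restricting the cotangent sequence of $\widetilde X_n \to P \to \pp$ to the generic fibre and injecting the torsion of $\Omega_{F_\eta/\sk(t)}$ into $\Omega_{F_\eta/C_\eta}$---is genuinely different from the paper's argument, and for $\dim \mathrm{Alb}_X=2$ it is essentially sound: there $f\colon F_\eta\to C_\eta$ is finite and generically \'etale onto a smooth curve, $f^*\Omega_{C_\eta/\sk(t)}$ is an invertible (hence torsion-free) sheaf on the integral curve $F_\eta$, the first arrow is generically an isomorphism and therefore injective, and the base-change identification $\Omega_{F_\eta/C_\eta}\cong \Omega_{X_n/\mathrm{Alb}_X}\otimes\sO_{F_\eta}$ together with a genericity argument (you must also avoid the zero-dimensional and embedded components of $\mathrm{Supp}\,\Omega_{X_n/\mathrm{Alb}_X}$, not only its divisorial components) gives the stated bound.

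The genuine gap is that at this stage of the paper $q=\dim\mathrm{Alb}_X$ is \emph{not} yet known to be $2$: the lemma feeds into Proposition~\ref{prop: ratio} and Theorem~\ref{Thm: slope of X with c}, which are exactly what later force $q=2$ in Corollary~\ref{cor: double cover comes out}. For $q>2$ your $C_\eta$ is a smooth $(q-1)$-fold, $a_n$ is only generically finite onto a surface $V\subsetneq\mathrm{Alb}_X$, so $f\colon F_\eta\to C_\eta$ is not finite surjective and $f^*\Omega_{C_\eta/\sk(t)}$ has rank $q-1>1$ while $\Omega_{F_\eta/\sk(t)}$ has generic rank $1$; the first arrow then has a nonzero kernel, its image is merely a rank-one quotient of a locally free sheaf and need not be torsion-free, and the injection $(\Omega_{F_\eta/\sk(t)})_{\mathrm{tor}}\hookrightarrow\Omega_{F_\eta/C_\eta}$ is lost. (Replacing $C_\eta$ by $V\cap C_\eta$ does not help directly, since $V$ may be singular and Bertini smoothness is unavailable there.) The paper sidesteps this entirely by working at the generic point $\xi_j$ of each ramification divisor $P_j$: there $\Omega_{\widetilde X_n/\pp,\xi_j}=\Omega_{X_n/\sk,\xi_j}/(\sum_i\lambda_i\,\md f_i)$ for a general vector $(\lambda_i)$, and the elementary D.V.R.\ Lemma~\ref{Lemma on DVR} shows its torsion length equals $\min_i v(\md f_i)$, which is bounded by the torsion length of $\Omega_{X_n/\sk,\xi_j}/(\md f_1,\dots,\md f_l)=\Omega_{X_n/\mathrm{Alb}_X,\xi_j}$; this comparison is purely local on the rank-two free module $\Omega_{X_n/\sk,\xi_j}$ and is insensitive to $q$. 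To repair your argument you would need to prove, in effect, that the image of $f^*\Omega_{C_\eta/\sk(t)}$ in $\Omega_{F_\eta/\sk(t)}$ is torsion-free for a general pencil, which is precisely what the D.V.R.\ lemma accomplishes in explicit terms.
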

This lemma is crucial to Proposition~\ref{prop: ratio} below. The inequality in this lemma is used to control the lower bound of $g_{n,\Xi}'$. Before, we prove this lemma, we first introduce some necessary notations. First denote by $X_n'\subset X_n$ the open locus where $a_n$ is unramified, and $P_1,...,P_s$ all the prime divisors contained in the complement of $X'_n$.  By Theorem~\ref{thm: Bertini} and  \cite[Thm.~I.6.10(2)]{Jouanolou}, there is an open subset $V\subset |L|$ such that for any $H\in V$, $a_n^*H$ is irreducible, reduced and moreover smooth inside $X_n'$. The choice of $\Xi_n=(B_n,B_n')$ is then as below.
\begin{enumerate}[(a)]
\item First take $W'\subseteq |L|$ to be a dense open subset contained in the projection of $W_n(X)\subseteq |L|\times |L|$ onto the first factor.

\item Then choose any $B_n\in V\cap W'$ not coinciding with any $P_j$. By construction, there is an open dense subset $M\subset |L|$ such that $B_n\times M$ is contained in $W_n(X)'$.

\item Finally choose $B_n'$ as a general member of $M$.
\end{enumerate}
\begin{proof}
Let us first note that $\dim_{\sk(t)}(\Omega_{\widetilde{X}_n/\pp,\mathrm{tor}})_\eta$ is calculated as below. First pick out all the horizontal (w.r.t. to $\varphi_{n,\Xi_n}$) divisors contained in the support of $\Omega_{\widetilde{X}_n/\pp,\mathrm{tor}}$, say $E_1,...,E_m$. Then $$\dim_{\sk(t)}(\Omega_{\widetilde{X}_n/\pp,\mathrm{tor}})_\eta=\sum\limits_{j=1}^m\mathrm{length}(\Omega_{\widetilde{X}_n/\pp, \mathrm{tor}})_{\eta_j} \cdot [E_j:\pp]$$ here $\eta_j$ is the generic point of $E_j$.

Then we shall figure out by our choice of $B_n$, $E_1,...,E_m$ is nothing but a subset of the strict transforms of the prime divisors $P_1,...,P_s$. In fact, since the exceptional divisors of $\widetilde{X}_n\to X_n$ consists of either sections of $\varphi_{n,\Xi}$ or vertical divisors by Proposition~\ref{prop: for varphis_n}, they are not contained in the horizontal component of the support of $\Omega_{\widetilde{X}_n/\pp,\mathrm{tor}}$. Next, by the special choice of $B_n$, a general member of the linear system generated by $B_n,B_n'$ lies in $V$. In particular, its pull back is smooth inside $X_n'$. Namely, for any $E_j$ its intersection with a general fibre of $\varphi_{n,\Xi}$ is contained in the inverse image of the complement of $X_0'$. As a result $E_j$ has to be one of the strict transform of $P_j$.

Finally, we calculate the length. After fixing $B_n$, the morphism $X_n \stackrel{a_n}{\to} \mathrm{Alb}_X \subseteq \mathbb{P}(H^0(L))$
restricting to $X_n\backslash D_n:=a_n^*B_n$ is a morphism to the affine space $$X_n\backslash D_n\stackrel{(f_1,...,f_l)}{\longrightarrow}\mathbb{A}^l,$$ with $l=\dim H^0(L)-1$. Then a general choice of a vector $(\lambda_1,...,\lambda_l)\in \mathbb{A}^{l}(\sk)$ gives arise to a general choice of $B_n'$ whose pull back is the zeroes of the function $\sum\limits_{i=1}^l\lambda_i f_i$. Now denote by $\xi_j$ the generic point of $P_j$.
The module $\Omega_{X_n/\sk,\xi_j}$ is a free module of $\sO_{X_n,\xi_j}$.   By above choice of $B_n'$, we have $$\Omega_{\widetilde X_n/\pp,\xi_j}=\Omega_{X_n/\sk,\xi_j}/(\sum\limits_{i=1}^l\lambda_i\md f_i).$$ In particular, its torsion length is $\mathrm{max}\{s\in \mathbb{N}| t_j^{-s}(\sum\limits_{i=1}^l\lambda_i\md f_i)\in \Omega_{X_n/\sk,\xi_j}\}$, here $t_j$ is a local parameter. It then follows from Lemma~\ref{Lemma on DVR}, that the length is not larger than the length of the torsion sheaf $$\Omega_{X_n/\sk,\xi_j}/(\md f_1,\md f_2,...,\md f_l)=\Omega_{X_n/\mathrm{Alb}_X,\xi_j}.$$ In other words, we have
\begin{align*}
\sum\limits_{j=1}^s\mathrm{length}(\Omega_{\widetilde{X}_n/\pp, \mathrm{tor}})_{\xi_j}[P_j:\pp]&\le \sum\limits_{j=1}^s\mathrm{length}(\Omega_{{X}_n/\mathrm{Alb}_X})_{\xi_j}[P_j:\pp]\\
&=c_1(\Omega_{X_n/\mathrm{Alb}_X})\cdot L_{X_n}.
\end{align*}
\end{proof}

\begin{lem}\label{Lemma on DVR}
Let $R$ be a D.V.R. containing an field $\sk$, $t$ be its uniformizer parameter. Suppose $M$ is a free $R$-module of finite rank and $u_i\in M,i=1,...,n$. For each $u\in M$ we denote by $v(u):=\mathrm{max}\{s|t^{-s}u\in M\}\in \mathbb{N} \cup +\infty$. Then,
\begin{enumerate}
\item the torsion length of $M/(u_1,...,u_n)$ is at least $\mathrm{min}\{v(u_i)|i=1,...,n\}$;

\item there is a proper linear subspace $N$ of $\sk^n$ such that for all $(\lambda_1,...,\lambda_n)\in \sk^n\backslash N$, we have $v(\sum\limits_{i=1}^n\lambda_i\cdot u_i)=\mathrm{min} \{v(u_i)|i=1,...,n\}$.
\end{enumerate}
\end{lem}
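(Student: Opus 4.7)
The plan is to reduce both parts to linear algebra over $\sk$ by choosing a basis of $M$ adapted to the $u_i$'s; the recurring observation is that $\sk \hookrightarrow R$ sends nonzero scalars to units in $R$, so $v(\lambda u) = v(u)$ for $\lambda \in \sk^{\times}$.

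For part (1), I would set $m := \mathrm{min}\{v(u_i) \mid i=1,\ldots,n\}$ and reorder the $u_i$ so that $v(u_1) = m$. Write $u_1 = t^m e_1'$ with $e_1' \in M \setminus tM$, and extend $e_1'$ to an $R$-basis $e_1', e_2', \ldots, e_r'$ of $M$ (possible since $\bar e_1' \neq 0$ in $M/tM$ and $M$ is free). Consider the image $\bar e_1' \in M/(u_1, \ldots, u_n)$: the relation $u_1 = t^m e_1'$ forces $t^m \bar e_1' = 0$, so $\bar e_1'$ is torsion. To see that its annihilator is exactly $(t^m)$, introduce the $R$-linear projection $\pi: M \to R/t^m R$ reading off the $e_1'$-coordinate modulo $t^m$; each $u_j$ lies in $t^m M \subseteq \ker \pi$ because $v(u_j) \ge m$, so $\pi$ descends to $M/(u_1, \ldots, u_n)$. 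Any relation $t^{m-1}\bar e_1' = 0$ in the quotient would force $t^{m-1} e_1' \in (u_1, \ldots, u_n) \subseteq \ker \pi$, contradicting $\pi(t^{m-1} e_1') = t^{m-1} \not\equiv 0 \pmod{t^m}$. Hence $R\bar e_1' \cong R/t^m R$ sits as a length-$m$ torsion submodule of $M/(u_1, \ldots, u_n)$, giving the claimed bound.

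For part (2), again set $m := \mathrm{min}\{v(u_i) \mid i=1,\ldots,n\}$ and factor $u_i = t^m w_i$ with $w_i \in M$; then $v(u_i) = m$ is equivalent to $w_i \notin tM$, i.e.\ to the image $\bar w_i \in \bar M := M/tM$ being nonzero. Consider the $\sk$-linear map $\Phi: \sk^n \to \bar M$, $(\lambda_1, \ldots, \lambda_n) \mapsto \sum_i \lambda_i \bar w_i$. Since $m$ is attained, at least one $\bar w_i$ is nonzero, so $\Phi$ is not the zero map and $N := \ker \Phi$ is a proper linear subspace of $\sk^n$. For $(\lambda_1, \ldots, \lambda_n) \notin N$ the vector $\sum \lambda_i w_i$ lies outside $tM$, whence $v(\sum \lambda_i w_i) = 0$ and $v(\sum \lambda_i u_i) = m$; the reverse inequality $v(\sum \lambda_i u_i) \ge m$ is immediate from $v(\lambda_i u_i) \ge v(u_i) \ge m$ for each $i$.

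I do not foresee any real obstacle: part (1) rests on the freeness of $M$ (to extend a non-$t$-divisible element to a basis) together with the projection $\pi$, and part (2) is just the statement that the $\sk$-linear map $\Phi$ has proper kernel precisely when some $\bar w_i$ is nonzero.
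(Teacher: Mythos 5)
Your proof is correct and follows essentially the same route as the paper: reorder/normalize so that the minimal valuation is realized by an element that can be completed to a basis (equivalently, divide by $t^m$), deduce (1) from the embedding $R/(t^m)\hookrightarrow M/(u_1,\dots,u_n)$, and deduce (2) from the kernel of a nonzero $\sk$-linear map into the residue module $M/tM$. Your write-up is in fact slightly more careful than the paper's, which asserts the injectivity in (1) ``by construction''---your projection $\pi$ is exactly the justification that assertion needs.
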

\begin{proof}
We rearrange $u_i$ such that $r=v(u_1)\le v(u_2)\le \cdots \le v(u_n)$.

(1). By construction, the canonical map $R/(t^{r}) \stackrel{\cdot t^{-r}u_1}{\to} M/(u_1,...,u_n)$ is an embedding.

(2). Dividing by $t^{r}$, we may assume that $v(u_1)=0$. As a result, we may find a basis $e_1=u_1,e_2,...,e_k$ of $M$. Then each $u_i=\sum\limits_{j=1}^k f_{ij}e_j, f_{ij}\in R,i=2,...,n$. Considering the map $$\sk^n\to R/tR: (\lambda_1,...,\lambda_n)\mapsto \lambda_1+\sum\limits_{i=2}^n\lambda_i\overline{f_{i1}}$$
This map is a non-zero $\sk$-linear map. If $(\lambda_1,...,\lambda_n)$ is not in the kernel of the this map then by construction we have
 $v(\sum\limits_{i=1}^n\lambda_i\cdot u_i)=0$.
\end{proof}

From now on, we fix a choice of $\Xi_n$ following the above rule and we drop the annoying $\Xi_n$ in subscript of the notation introduced previously for simplicity.  For example, we write $g_n'=g_{n,\Xi_n}'$ defined above.

We take $\Lambda_1:=\{n| \phi_n \ \text{is inseparable}\}$ and $\Lambda_2:=\{n| \phi_n \ \text{is  separable} \}.$

\begin{prop}\label{prop:existence of descent}
If $\Lambda_2$ contains infinitely many prime numbers, then  $\phi_n: X_n\dashrightarrow Z_n$ descends (with respective to $\nu_n:X_n\to X$) to $\pi_i: X\dashrightarrow Y_i$ for a fixed $i$ for infinitely many $n\in \Lambda_2$.
\end{prop}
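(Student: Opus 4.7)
The plan is to prove that for every sufficiently large prime $n\in\Lambda_2$, the involution $\sigma_n\in\mathrm{Aut}(X_n)$ defining the separable double cover $\phi_n$ commutes with the deck transformation group $G_n:=\mathrm{Alb}_X[n]$ of the Galois étale cover $\nu_n:X_n\to X$. Once this is established, $\sigma_n$ descends to an involution $\bar\sigma_n$ on $X$; this descended involution is non-trivial (as $G_n\cong(\mathbb{Z}/n)^{2q}$ has no element of order $2$ for $n$ an odd prime) and satisfies $a_X\bar\sigma_n=a_X$ by cancellation from $a_X\nu_n=\mu_na_n$ and $a_n\sigma_n=a_n$. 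Since $\mu_n$ is flat, finite-group quotients commute with this base change, giving $Z_n=X_n/\sigma_n=(X/\bar\sigma_n)\times_{\mathrm{Alb}_X}\mathrm{Alb}_X$; hence $\phi_n$ is the base change of the rational double cover $X\dashrightarrow X/\bar\sigma_n$, which is birational to exactly one $\pi_i$. Since $\Lambda_2$ contains infinitely many primes but there are only $s$ choices, pigeonhole yields a fixed $i$ occurring for infinitely many $n\in\Lambda_2$.

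\textbf{The obstruction as a crossed homomorphism.} Set $\Gamma_n:=\{\tau\in\mathrm{Aut}(X_n):a_n\circ\tau=a_n\}$, which contains $\sigma_n$ and intersects $G_n$ trivially (if $g\in G_n\cap\Gamma_n$ then $t_g$ acts trivially on the non-empty set $a_n(X_n)$, forcing $g=0$). For $g\in G_n$ one has $a_n\circ g=t_g\circ a_n$ where $t_g$ is translation by $g$, and combining with $a_n\sigma_n=a_n$ a direct computation gives $a_n\circ(\sigma_ng\sigma_n^{-1}g^{-1})=a_n$, so $\psi(g):=\sigma_ng\sigma_n^{-1}g^{-1}$ lies in $\Gamma_n$. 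Expanding $\sigma_n(g_1g_2)\sigma_n^{-1}$ in two ways yields the identity $\psi(g_1g_2)=\psi(g_1)\cdot g_1\psi(g_2)g_1^{-1}$, so $\psi:G_n\to\Gamma_n$ is a crossed homomorphism for the conjugation action of $G_n$ on $\Gamma_n$. Its kernel equals $C_{G_n}(\sigma_n)$, and $\psi$ factors through an injection of sets $G_n/\ker\psi\hookrightarrow\Gamma_n$. The vanishing $\psi\equiv e$ is therefore equivalent, given $\Gamma_n\cap G_n=\{e\}$, to $\sigma_n$ centralising (equivalently normalising) $G_n$, which is the condition required for $\sigma_n$ to descend along $\nu_n$.

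\textbf{Uniform bound and conclusion.} The crucial ingredient is the uniform bound $|\Gamma_n|\leq d:=\deg a_X$, independent of $n$: the finite group $\Gamma_n$ acts generically freely on $X_n$ and preserves $a_n$, so $a_n$ factors as $X_n\xrightarrow{|\Gamma_n|}X_n/\Gamma_n\to\mathrm{Alb}_X$ and the total degree $d$ forces $|\Gamma_n|$ to divide $d$. Consequently, for every prime $n>d$, $[G_n:\ker\psi]\leq|\Gamma_n|\leq d<n$; since every subgroup of $G_n=(\mathbb{Z}/n)^{2q}$ has index a power of $n$, this index must equal $1$, yielding $\psi\equiv e$. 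Thus $\sigma_n$ centralises $G_n$, descends to $\bar\sigma_n$, and the descent-plus-pigeonhole programme of the first paragraph concludes. The central obstacle is formulating the obstruction $\psi$ correctly and bounding $|\Gamma_n|$ uniformly in $n$; once both are in hand, the rigidity of $(\mathbb{Z}/n)^{2q}$ for $n$ prime finishes the argument automatically.
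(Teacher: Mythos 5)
Your proposal is correct, and it reaches the conclusion by a genuinely different mechanism than the paper. The paper works on the base: it packages all order-two automorphisms of $X$ over $V:=a_X(X)$ into the scheme $M=\mathrm{Aut}_V(X)[2]$, observes that each $\sigma_\ell$ gives a factorisation of $\nu_\ell:V_\ell\to V$ through a component $M_i$ dominating $V$, pigeonholes a fixed $i$, and then invokes a rigidity lemma (Lemma~\ref{Lem1}: a generically finite cover of $V$ factoring $\nu_\ell$ for infinitely many primes $\ell$ is an isomorphism, because its degree divides $\ell^{2q}$ and is eventually smaller than $\ell$) to conclude $M_i\cong V$, i.e.\ a single involution $\sigma$ on $X$ pulling back to the $\sigma_\ell$. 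You instead work on the covering side: you descend each $\sigma_n$ individually by showing it centralises the deck group $G_n\cong(\mathbb{Z}/n)^{2q}$, the obstruction being the crossed homomorphism $\psi(g)=\sigma_ng\sigma_n^{-1}g^{-1}$ valued in $\Gamma_n=\{\tau: a_n\tau=a_n\}$, whose image has size at most $\deg a_X$ uniformly; for a prime $n>\deg a_X$ the index $[G_n:\ker\psi]$ is a power of $n$ smaller than $n$, hence $1$. Both arguments ultimately exploit the same rigidity of $(\mathbb{Z}/\ell)^{2q}$ for large primes $\ell$, but yours avoids the automorphism scheme and the factorisation lemma entirely and yields the stronger per-$n$ statement that \emph{every} $\sigma_n$ with $n\in\Lambda_2$ prime and $n>\deg a_X$ descends, with the pigeonhole applied only at the very end among the finitely many $\pi_i$. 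Two points you use implicitly and should flag: the identity $a_n\sigma_n=a_n$ holds because for $n\in\Lambda_2$ the choice of $\Xi_n$ in Lemma~\ref{Lem: choice of XI}(i) makes $\phi_n$ relative to $a_n$ (not merely to $\varphi_n$); and the injectivity of $\Gamma_n$ into the generic fibre, equivalently the bound $|\Gamma_n|\le[K(X_n):K(V_n)]$, which is what makes the bound uniform in $n$. With those made explicit the argument is complete.
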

In \cite{L-Z17}, they also provide a similar result \cite[Thm.~3.1]{L-Z17} by using Xiao's linear bound of the automorphism groups of surfaces of general type by $c_1^2$ (cf. \cite{Xiao94}). Such a linear bound is beyond available in positive characteristics, and we shall provide here a new argument instead.

To proceed, we need the following set up. Let $A$ be an abelian variety over $\bf{k}$, and $\mu_\ell:A\rightarrow A$ be the multiplication by $\ell$ morphism. Let $V\hookrightarrow A$ be a fixed subvariety and $V_{\ell}$ be the base change of $V$ via $\mu_\ell$:
$$
\xymatrix{
V_l\ar[rr]^{\nu_{\ell}}\ar@{^(->}[d] && V \ar@{^(->}[d] \\
A\ar[rr]^{\mu_\ell} && A
}
$$
\begin{lem}\label{Lem1}
Given any generically finite morphism $\pi: V'\to V$ (not necessarily proper) where $V'$ is a variety, assume that for infinitely many primes $\ell$ the morphism $\nu_{\ell}: V_{\ell}\to V$ factors through $\pi$, then $\pi$ is an isomorphism.
\end{lem}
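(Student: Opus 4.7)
The plan proceeds in three successive stages: a degree count to get $\pi$ birational; a properness argument for $\alpha_\ell$ that also makes $\pi$ quasi-finite; and finally faithfully flat descent applied to a fiber product to upgrade $\pi$ to an isomorphism.

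First, I would restrict attention to primes $\ell$ coprime to $p=\mathrm{char}(\sk)$. For such $\ell$, $\mu_\ell\colon A\to A$ is finite étale of degree $\ell^{2g}$ with $g:=\dim A$, hence so is its base change $\nu_\ell$. A factorization $\nu_\ell=\pi\circ\alpha_\ell$ then forces $\deg\pi\mid\ell^{2g}$, and letting $\ell$ range over the infinitely many allowed primes yields $\deg\pi=1$, so $\pi$ is birational.

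Next, for a single such $\ell$, I would extract finer properties from the factorization. Since $\pi$ is birational, on the open $U\subseteq V$ where $\pi$ is an isomorphism $\alpha_\ell$ must coincide with $(\pi|_{\pi^{-1}(U)})^{-1}\circ\nu_\ell$, so $\alpha_\ell$ is dominant; the properness of $\nu_\ell=\pi\circ\alpha_\ell$ together with the separatedness of $\pi$ then promotes $\alpha_\ell$ to a proper, hence surjective, morphism. From $\nu_\ell^{-1}(v)=\alpha_\ell^{-1}(\pi^{-1}(v))$ and the surjectivity of $\alpha_\ell$ on fibers, $|\pi^{-1}(v)|\le|\nu_\ell^{-1}(v)|<\infty$, so $\pi$ is quasi-finite.

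The main step is the descent. I would form the fiber product $V'':=V_\ell\times_V V'$, with projections $p_1\colon V''\to V_\ell$ and $p_2\colon V''\to V'$, and consider the section $s=(\mathrm{id},\alpha_\ell)\colon V_\ell\to V''$ of $p_1$ supplied by the identity $\pi\circ\alpha_\ell=\nu_\ell$. Since $p_1$ is separated, $s$ is a closed immersion. Two geometric observations control $V''$: (a) $p_2$, the base change of the étale $\nu_\ell$, is étale, so $V''$ is reduced, and since $V'$ is irreducible every irreducible component of $V''$ dominates $V'$ and has dimension $\dim V'=\dim V_\ell$; (b) $p_1$, the base change of the birational quasi-finite $\pi$ along the open morphism $\nu_\ell$, is birational and quasi-finite. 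The image $s(V_\ell)$ is then an irreducible closed subset of $V''$ of top dimension, hence an irreducible component, and being a section it is the unique component of $V''$ dominating $V_\ell$. If another irreducible component $W\subseteq V''$ existed, quasi-finiteness of $p_1|_W$ would force $\dim p_1(W)=\dim W=\dim V_\ell$, so $p_1(W)$ would be dense in the irreducible $V_\ell$, making $W$ a second dominant component and violating $\deg p_1=1$. Therefore $V''=s(V_\ell)$ set-theoretically, and the reducedness of $V''$ upgrades $s$ to an isomorphism. This says $\pi$ becomes an isomorphism after the fppf base change $\nu_\ell$; since being an isomorphism is local for the fppf topology, $\pi$ itself is an isomorphism.

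The hard part will be the last stage, where one must rule out spurious irreducible components of $V''$. This is where the birationality of $\pi$ (controlling $p_1$) has to conspire with the étaleness of $\nu_\ell$ (controlling $p_2$): without birationality $V''$ could carry several dominant components, and without étaleness its components need not all be of top dimension, so the section $s$ would not be forced to fill $V''$ and the descent would collapse.
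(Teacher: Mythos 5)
Your argument is correct, and it reaches the conclusion by a genuinely different route from the paper's. For birationality the paper fixes a single prime $\ell>[K(V'):K(V)]$ and uses that a component $V'_\ell$ of $V_\ell$ is Galois over $V$ with group a subgroup of $(\mathbb{Z}/\ell\mathbb{Z})^{2g}$, so that $[K(V'):K(V)]$ divides $\ell^{2g}$ and, being smaller than $\ell$, equals $1$; your ``divisibility for two distinct primes'' count gives the same conclusion using only the degree of $\nu_\ell$ rather than its Galois structure, so the two openings are essentially interchangeable. The real divergence is in the upgrade from birational to isomorphism: the paper shows the lift $\alpha_\ell$ is proper, surjective and quasi-finite, hence finite, deduces that $\pi$ is finite, and then concludes via the invariant ring $\sO_{V'}\subseteq \sO_{V'_\ell}^{\Gamma}=\sO_V$; you instead base-change $\pi$ along the finite \'etale surjective (hence fppf) morphism $\nu_\ell$, show the section $s$ exhausts $V_\ell\times_V V'$, and invoke fppf descent of the property of being an isomorphism. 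Your route trades the Galois-invariants computation for a components-of-the-fiber-product analysis; it is somewhat more robust (it never uses that $\nu_\ell$ is Galois after the degree count) at the cost of being longer. The one imprecision is that $V_\ell$, and hence $s(V_\ell)$, need not be irreducible --- the paper explicitly works with a component $V'_\ell$ of $V_\ell$ for exactly this reason --- but this is harmless for you: $s(V_\ell)$ is still a disjoint union of top-dimensional irreducible components of $V''$, every component of $V''$ has dimension $\dim V_\ell$ because $p_2$ is finite \'etale over the irreducible $V'$, and your quasi-finiteness argument rules out any further component dominating a component of $V_\ell$ since $p_1$ is an isomorphism over a dense open subset meeting every component of $V_\ell$; so $V''=s(V_\ell)$ and the descent goes through unchanged.
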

\begin{proof}
We consider the field extension $K(V) \subseteq K(V')$. By our assumption, we may find some $\ell$ such that $\ell> [K(V'):K(V)]$ and the covering $\nu_\ell: V_\ell\to V$ is factored through by $\pi$ as $$\xymatrix{
V'_\ell\ar[r]^\tau \ar@/^1pc/[rr]^{\nu'_\ell} & V' \ar[r]^{\pi} & V.
}$$
Therefore $K(V')$ is $K$-linearly embedded into $K(V'_\ell)$, here $V'_\ell$ is a component of $V_\ell$. By our construction, $\mu_\ell$ (and hence $\nu_\ell$)  is a $(\mathbb{Z}/\ell\mathbb{Z})^{2g}$ Galois cover, and $V'_\ell$ is a $\Gamma$ Galois cover of $V$ for a certain subgroup $\Gamma\subseteq (\mathbb{Z}/\ell\mathbb{Z})^{2g}$. Thus we have $$[K(V'):K(V)]\cdot [K(V'_\ell):K(V')]=|\Gamma|\mid \ell^{2g}.$$ This is only possible that $[K(V'):K(V)]=1$ since $\ell\ge [K(V'):K(V)]$ by assumption.

By above argument, the morphism $\pi$ is birational. Note that $\mu_\ell$ is finite and $\tau$ is projective. Thus, $\tau$ is surjective and quasi-finite which implies it is also finite. By Chavelley's theorem, the morphism $\pi$ is also finite. On the other hand, it is clear that $\sO_{V'}\subseteq \sO_{V'_\ell}^\Gamma=\sO_V$ and we are done.
\end{proof}

\begin{proof}[Proof of Proposition~\ref{prop:existence of descent}]
We write $V:=a_X(X)$ the schemematic image of $X$ and denote by $$M:=\mathrm{Aut}_{V}(X)[2]$$ the scheme of order $2$ automorphisms of $a_X$. Since $X\to V$ is generically finite, $M$ is also generically finite over $V$.  Let $M_1, M_2,...,M_s$ be the (reduced) components of $M$ dominating $V$. By our assumption, $\nu_\ell: V_\ell\to V$ factor through $M_i$ (for some fixed $i$) for infinitely many primes  $\ell \in \Lambda_2$ represented by $\sigma_\ell$. It then follows from Lemma~\ref{Lem1} that $M_i$ is isomorphic to $V$. Namely there is a non-trivial automorphism $\sigma$ of $X$ order $2$ relative to $\mathrm{Alb}_X$ descending infinitely $\sigma_\ell$. We are done.
\end{proof}

\begin{prop}\label{Prop: inseparable descent}
If $a_X: X\to \mathrm{Alb}_X$ is inseparable, then for any $n\in \Lambda_1$, the morphism $\phi_n$ descends rationally to the unique inseparable double cover $\pi: X\to X/\sT_{X/\mathrm{Alb}_X}$.
\end{prop}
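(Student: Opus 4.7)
The plan is to identify $\phi_n$, up to birational equivalence, with the inseparable double cover determined by the $1$-foliation $\sT_{X_n/\mathrm{Alb}_X}$, and then to descend this $1$-foliation along the étale cover $\nu_n$. Since $\pi:\widetilde{X}_n\to X_n$ is birational, $\phi_n$ corresponds birationally to a rational inseparable double cover $X_n\dashrightarrow Z_n$ that is still relative to the fibration $\varphi_n:\widetilde{X}_n\to\pp$. By Example~\ref{exp: 1} (applied birationally, after replacing $Z_n$ by its normalization and shrinking), any such cover is birationally equivalent to the one associated to the rank-$1$ $1$-foliation $\sT_{\widetilde{X}_n/\pp}$. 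So it suffices to show that this foliation matches $\sT_{X_n/\mathrm{Alb}_X}$ and then that the latter descends.

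Next, I would restrict to the dense open subset $U\subseteq X_n$ on which $\varphi_n=\iota_n\circ a_n$ is a morphism and $\pi$ is an isomorphism. The factorization $\varphi_n|_U=\iota_n\circ a_n|_U$ yields a surjection $\Omega_{U/\pp}\twoheadrightarrow\Omega_{U/\mathrm{Alb}_X}$ and dually an inclusion $\sT_{U/\mathrm{Alb}_X}\hookrightarrow\sT_{U/\pp}$ of saturated rank-$1$ subsheaves of $\sT_U$. Since $\nu_n$ is étale (as $(n,p)=1$), the inseparability of $a_X$ passes to $a_n$, and the same argument as in Section~\ref{Sec: 5} shows that $\sT_{X_n/\mathrm{Alb}_X}$ is a $1$-foliation of rank $1$; meanwhile $\sT_{U/\pp}$ is rank $1$ because $\varphi_n$ is a surface fibration. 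As two saturated subsheaves of the same generic rank one included in the other, they must coincide on all of $U$. By Theorem~\ref{thm: 1-1 on foliation}, $\phi_n$ is birationally equivalent to the inseparable double cover $X_n\to X_n/\sT_{X_n/\mathrm{Alb}_X}$.

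Finally, because $\nu_n$ is étale, the formation of relative Kähler differentials is compatible with $\nu_n$-pullback, so $\sT_{X_n/\mathrm{Alb}_X}=\nu_n^*\sT_{X/\mathrm{Alb}_X}$; moreover, the quotient of a smooth surface by a $1$-foliation commutes with étale base change. This produces a cartesian square
$$
\xymatrix{
X_n \ar[r] \ar[d]_{\nu_n} & X_n/\sT_{X_n/\mathrm{Alb}_X} \ar[d] \\
X \ar[r]^{\pi} & X/\sT_{X/\mathrm{Alb}_X}
}
$$
which exhibits $\phi_n$ as rationally descending (via $\nu_n$) to $\pi$. The main obstacle is the identification of the two $1$-foliations on $U$: it hinges on the generic rank computation for $\sT_{U/\mathrm{Alb}_X}$ using the inseparability of $a_X$, together with the verification that both $\sT_{U/\pp}$ and $\sT_{U/\mathrm{Alb}_X}$ are saturated in $\sT_U$ (which uses torsion-freeness of the relevant quotient sheaves on the smooth surface $U$).
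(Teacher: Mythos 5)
Your proposal is correct and follows essentially the same route as the paper's proof: identify $\phi_n$ with the cover attached to the rank-one $1$-foliation $\sT_{X_n/\pp}$ via Example~\ref{exp: 1}, observe $\sT_{X_n/\mathrm{Alb}_X}\subseteq\sT_{X_n/\pp}$ from the rational factorization of $\varphi_n$ through $a_n$, conclude equality since both are saturated of rank one (using inseparability of $a_n$, inherited from $a_X$ via the \'etale $\nu_n$), and descend the foliation along the \'etale cover. Your added care about working on a dense open subset and checking saturation only makes explicit what the paper leaves implicit.
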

\begin{proof}
Note that when $n\in \Lambda_1$, $\phi_n$ is purely inseparable relative to the base $\pp$. So $\phi_n$ is obtained by the $1$-foliation $\sT_{X_n/\pp}$ (cf. Example~\ref{exp: 1}). However, we clearly have $\sT_{X_n/\mathrm{Alb}_X}\subseteq \sT_{X_n/\pp}$ as the morphism $\varphi_n: X_n\to \pp$ is factored through by $a_n: X_n\to \mathrm{Alb}_X$ rationally by construction. In case $a_X$ is inseparable, so is $a_n$ and hence both $1$-foliations $\sT_{X_n/\mathrm{Alb}_X}, \sT_{X_n/\pp}$ are of rank $1$. Namely, they coincides. Finally since $\mu_n$ is \'etale, $\sT_{X_n/\mathrm{Alb}_X}$ descends to $\sT_{X/\mathrm{Alb}_X}$. Therefore $\phi_n$ descends as we desired.
\end{proof}

\begin{prop}\label{prop: ratio}
We have $\varlimsup\limits_{n\to \infty}\dfrac{g_n'}{g_n}\ge  c(X,L)$.
\end{prop}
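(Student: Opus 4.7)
The proof plan splits into three cases depending on the asymptotic separability of $\phi_n$ and that of $a_X$. Since every sufficiently large $n$ coprime to $p$ lies in $\Lambda_1\cup\Lambda_2$, at least one of these sets contains infinitely many primes, and the $\Lambda_1$-case further subdivides by whether $a_X$ is separable.

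First, suppose $\Lambda_2$ contains infinitely many primes. By Proposition~\ref{prop:existence of descent} one extracts an infinite subsequence $\Lambda\subseteq\Lambda_2$ and a fixed index $i$ so that each $\phi_n$ descends via $\nu_n$ to $\pi_i:X\dashrightarrow Y_i$ for $n\in\Lambda$. Because $\Xi_n\in W_n(X)\subseteq U_n(Y_i)$, the pencil construction of Section~\ref{Section: Severi inequality} carries over verbatim with $Y_i$ in place of $X$ and with the pull-back pencil $(h_i^*B_n,h_i^*B_n')$, and the resulting fibration is birationally equivalent (hence equal in generic-fibre genus) to $\tau_n':Z_n'\to\pp$. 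Applying formula~(\ref{equ: formula of varphi_n}) to $Y_i$ then yields
\[g_n'=\frac{n^{2q-2}K_{Y_i}\cdot L_{Y_i}+n^{2q-4}L_{Y_i}^2}{2}+1,\]
so that $g_n'/g_n\to c_i(X,L)\ge c(X,L)$ along $\Lambda$. Next, if $\Lambda_1$ contains infinitely many primes and $a_X$ is inseparable, Proposition~\ref{Prop: inseparable descent} tells us every such $\phi_n$ descends to the unique inseparable double cover $X\to X/\sT_{X/\mathrm{Alb}_X}$, whose minimal model is one of the $Y_i$'s; the previous argument then applies verbatim.

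The remaining case, $\Lambda_1$ prime-infinite with $a_X$ separable, forces $p=2$ and is the technical heart of the proposition. The special choice of $\Xi_n$ mandated by Lemma~\ref{lemma: g'} provides
\[\dim_{\sk(t)}(\Omega_{\widetilde X_n/\pp,\mathrm{tor}})_\eta\le n^{2q-2}R_X\cdot L_X.\]
Example~\ref{exp: 1} identifies the inseparable $\phi_n$ with the normalized relative Frobenius of $\varphi_n$, so $Z_n$ is birational to the normalization of $\widetilde X_n^{(1)}=\widetilde X_n\times_{\pp,F_{\pp}}\pp$, and the smooth generic fibre $G$ of $\tau_n'$ arises by desingularizing the Frobenius twist $F^{(1)}$ of the (generally non-smooth) generic fibre $F$ of $\varphi_n$. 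The plan is to express the genus drop $g_n-g_n'$ in terms of the $\delta$-invariant of $F$, bound the latter via the torsion estimate above, and combine with the asymptotic for $g_n$ from (\ref{equ: formula of varphi_n}) to obtain
\[\liminf_{n\in\Lambda_1,\,n\to\infty}\frac{g_n'}{g_n}\ge c_0(X,L)\ge c(X,L).\]

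The main obstacle is Case 3: one must extract the precise constant $c_0(X,L)=\frac{(2K_X-R_X)\cdot L_X}{2K_X\cdot L_X}$, not merely a weaker lower bound, from Lemma~\ref{lemma: g'}. This requires a careful local analysis both of how singularities of $F$ propagate under Frobenius base change followed by normalization, and of how $\dim_{\sk(t)}(\Omega_{\widetilde X_n/\pp,\mathrm{tor}})_\eta$ encodes these singularities in terms of the global divisor $R_X$. In contrast, Cases 1 and 2 reduce neatly to Section~\ref{Section: Severi inequality} applied to the target of the descent, once the descent propositions of the preceding subsection are in hand.
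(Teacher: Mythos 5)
Your Cases 1 and 2 follow the paper's proof exactly: once Propositions~\ref{prop:existence of descent} and \ref{Prop: inseparable descent} supply a descent of $\phi_n$ to some fixed $\pi_i:X\dashrightarrow Y_i$ along infinitely many $n$, the choice $\Xi_n\in W_n(X)\subseteq U_n(Y_i)$ lets one run the construction of Section~\ref{Section: Severi inequality} on $(Y_i)_n$ and read off $g_n'$ from formula~(\ref{equ: formula of varphi_n}) applied to $Y_i$, giving $g_n'/g_n\to c_i(X,L)$. That part is complete and matches the paper.

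The gap is in Case 3 ($a_X$ separable, $\phi_n$ inseparable, $p=2$). You correctly assemble the two ingredients — the torsion bound of Lemma~\ref{lemma: g'} and the identification of $\phi_n$ with the normalized relative Frobenius — but you stop at a ``plan'' to relate the genus drop $g_n-g_n'$ to a $\delta$-invariant of the generic fibre, and you explicitly flag this passage as an unresolved obstacle requiring a careful local study of how singularities propagate under Frobenius base change and normalization. No such analysis is needed, and without supplying it (or its substitute) the case is not proved. The paper closes this step with a single appeal to the genus change formula of \cite[\S~2.1, Prop.~2.2]{Gu16}, which for the inseparable degree-$2$ quotient of the generic fibre gives the exact identity
\begin{equation*}
g_n'=g_n-\frac{1}{4}\dim_{\sk(t)}\bigl(\Omega_{\widetilde X_n/\pp,\mathrm{tor}}\bigr)_\eta .
\end{equation*}
Feeding in Lemma~\ref{lemma: g'} together with $c_1(\Omega_{X_n/\mathrm{Alb}_X})=\nu_n^*R_X$ (because $\mu_n$ is \'etale) yields
\begin{equation*}
g_n'\ \ge\ \frac{n^{2q-2}(2K_X-R_X)\cdot L_X+2n^{2q-4}L_X^2}{4}+1,
\end{equation*}
whence $\varlimsup g_n'/g_n\ge c_0(X,L)$ directly — the precise constant falls out with no further local computation. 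So the missing idea is exactly this genus change formula; until you state and justify it (by citation or by the $\delta$-invariant computation you sketch), the case you correctly identify as the heart of the proposition remains open in your write-up.
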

\begin{proof}
Due to Proposition~\ref{prop:existence of descent} and Proposition~\ref{Prop: inseparable descent}, there are actually two possibilities:
\begin{enumerate}
\item $\phi_n$ descends to a $\pi_i$ for infinitely many $n$;

\item $a_X$ is separable and $\phi_n$ is inseparable for infinitely many $n$.
\end{enumerate}
In this first case, the rational fibration $\tau_n: Z_n\dashrightarrow \pp$ is obtained similarly to $\varphi_n: X_n\dashrightarrow \pp$ by replacing $X_n$ by $(Y_i)_n$. By our choice of $\tau'_n$ (cf. the construction of $W_n(X)$ in this subsection), we can directly calculate that $$g_n'=\dfrac{n^{2q-2}K_{Y_i}\cdot L_{Y_i}+n^{2q-4} L_{Y_i}^2}{2}+1$$ as we do for $X_n$ (cf. formula (\ref{equ: formula of varphi_n})).
So  $g_n'/g_n\rightarrow c_i(X,L)=\dfrac{K_{Y_i}\cdot L_{Y_i}}{K_X\cdot L_X}$.

In the second case, we shall need the genus change formula. By \cite[\S~2.1, Prop.~2.2]{Gu16}, the genus $g_n'$ is given by
$$g_n'=g_n-\dfrac{1}{4}\dim_{\sk(t)} (\Omega_{\widetilde{X}_n/\pp,\mathrm{tor}})_\eta.$$
Here $\eta:=\Spec(\sk(t))$ is the generic point of the base $\pp$. By Lemma~\ref{lemma: g'}, we have
$$\dim_{\sk(t)} (\Omega_{\widetilde{X}_n/\pp,\mathrm{tor}})_\eta\le L_{X_n}\cdot c_1(\mathrm{det}(\Omega_{X_n/\mathrm{Alb}_X})).$$
Now as $\mu_n: \mathrm{Alb}_X\to \mathrm{Alb}_X$ is \'etale,  $$ c_1(\mathrm{det}(\Omega_{X_n/\mathrm{Alb}_X}))= \nu_n^* c_1(\Omega_{X/\mathrm{Alb}_X})=\nu_n^*R_X.$$
All together, the following inequality holds:
\begin{align*}
g_n'&\ge g_n-\dfrac{L_{X_n}\cdot \nu_n^*R_X}{4}\\
    &=\dfrac{n^{2q-2}K_X\cdot L_X+n^{2q-4}L_X^2}{2}-\dfrac{n^{2q-2}R_X\cdot L_X}{4}+1\\
    &=\dfrac{n^{2q-2}(2K_X-R_X)\cdot L_X+2n^{2q-4}L_X^2}{4}+1,
\end{align*}
and $\varlimsup\limits_{n\to \infty} \dfrac{g_n'}{g_n}\ge \dfrac{(2K_X-R_X)\cdot L_X}{2K_X\cdot L_X}=c_0(X,L)$.
\end{proof}

\subsection{Slopes revisited}
Immediately from Theorem~\ref{thm: slope with gonality p} and Proposition~\ref{prop: ratio}, we have the following theorem.
\begin{thm}\label{Thm: slope of X with c}
We have $K_X^2\ge [4+\mathrm{min}\{c(X,L),\dfrac{1}{3}\}]\chi(\sO_X)$.
\end{thm}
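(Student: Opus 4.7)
The plan is to apply the refined slope inequality Theorem~\ref{thm: slope with gonality p}(2) to each non-hyperelliptic fibration $\varphi_n\colon \widetilde X_n\to \pp$ constructed in Section~\ref{Sec: 5}, with an $n$-dependent slope constant $c=c_n$, and then to pass to the $\varlimsup$ using Proposition~\ref{prop: ratio}.

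First, one may assume $K_X^2<\tfrac{9}{2}\chi(\sO_X)$, since $4+\tfrac{1}{3}<\tfrac{9}{2}$ makes the desired inequality otherwise automatic. Under this assumption Corollary~\ref{cor:existence of double cover} guarantees that for every $n\gg 0$ with $(n,p)=1$, at least one of the maps $\phi_{n,\Xi,i}$ arising from the Harder--Narasimhan filtration has $\gamma_i=2$, so the invariant $g'_n$ is well-defined. Because $Z_n$ is the minimal model of every $Z_{n,\Xi,i}$ with $\gamma_i=2$ and $Z'_n\to Z_n$ is a minimal resolution of indeterminacy, the fibre genus $g'_n$ of $Z'_n\to\pp$ is no larger than the fibre arithmetic genus of any such $Z_{n,\Xi,i}$, giving
\[
b_i\ge g'_n \qquad\text{whenever}\quad \gamma_i=2.
\]

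Set $c_n:=\min\{g'_n/g_n,\,1/3\}$. By Lemma~\ref{Lem: non-hyperelliptic} the fibration $\varphi_n$ is non-hyperelliptic, and by Proposition~\ref{prop: for varphis_n}(5) the divisor $K_{\varphi_n}$ is nef. Whenever $c_n>0$, Theorem~\ref{thm: slope with gonality p}(2) applies with $c=c_n$ and yields
\[
K_{\varphi_n}^2\ge (4+c_n)\,\frac{g_n-1}{g_n+2}\,\chi_{\varphi_n};
\]
when $c_n=0$ this holds trivially by the Severi inequality $\lambda_{\varphi_n}\ge 4(g_n-1)/g_n$ from Section~\ref{Section: Severi inequality}. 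Dividing by $\chi_{\varphi_n}$, letting $n\to\infty$ and using the limit computation (\ref{equ: formula of slope of varphin}) along with $g_n\to\infty$ give
\[
\frac{K_X^2}{\chi(\sO_X)}\ge 4+\varlimsup_{n\to\infty} c_n.
\]

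The last step is the elementary observation that $\varlimsup_n\min\{a_n,\tfrac{1}{3}\}\ge \min\{\varlimsup_n a_n,\,\tfrac{1}{3}\}$ for any real sequence, which applied to $a_n=g'_n/g_n$ together with Proposition~\ref{prop: ratio} gives $\varlimsup c_n\ge \min\{c(X,L),\tfrac{1}{3}\}$, completing the proof. The main point to check is that the choices of $\Xi_n$ made in Section~\ref{Sec: 5} to establish Proposition~\ref{prop: ratio} simultaneously lie in $V_n(X)$ and thus keep $\varphi_n$ non-hyperelliptic, but this is already built into the definition $W_n(X)\subseteq V_n(X)$; apart from this bookkeeping, the argument is just a chain of already-proved statements.
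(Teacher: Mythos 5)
Your proposal is correct and takes essentially the same route as the paper, which deduces the theorem directly from Theorem~\ref{thm: slope with gonality p}(2) and Proposition~\ref{prop: ratio}; your write-up merely supplies the bookkeeping the paper leaves implicit (the choice $c_n=\min\{g'_n/g_n,\tfrac13\}$, the bound $b_i\ge g'_n$ for $\gamma_i=2$, the passage to the $\varlimsup$, and the degenerate cases $K_X^2\ge\tfrac92\chi(\sO_X)$ and $c_n=0$).
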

Combining with Proposition \ref{prop: characterization of c}, the following corollary is clear.
\begin{cor}\label{cor: double cover comes out}
The surface $X$ is on the Severi line only if $q=2$ and $a_X: X\to \mathrm{Alb}_X$ is a double cover.
\end{cor}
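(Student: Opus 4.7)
The plan is to deduce the corollary as a direct consequence of Theorem~\ref{Thm: slope of X with c} combined with Proposition~\ref{prop: characterization of c}, using the positivity of $\chi(\mathcal{O}_X)$ as the one auxiliary input.

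First I would fix any very ample line bundle $L$ on $\mathrm{Alb}_X$ (the conclusion we seek is intrinsic to $X$, so the choice is immaterial) and write down the refined Severi inequality
$$K_X^2 \geq \left(4 + \min\left\{c(X,L),\, \tfrac{1}{3}\right\}\right)\chi(\mathcal{O}_X)$$
supplied by Theorem~\ref{Thm: slope of X with c}. The Severi-line hypothesis $K_X^2 = 4\chi(\mathcal{O}_X)$ then collapses this inequality to
$$0 \geq \min\left\{c(X,L),\, \tfrac{1}{3}\right\}\cdot \chi(\mathcal{O}_X).$$
The crucial auxiliary fact is the strict positivity $\chi(\mathcal{O}_X) > 0$, which is the Shepherd-Barron theorem recalled in Section~\ref{Section: Severi inequality} and which holds in arbitrary characteristic for a minimal surface of general type with maximal Albanese dimension. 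Dividing through by $\chi(\mathcal{O}_X)$ gives $\min\{c(X,L),\, 1/3\} \leq 0$, and since $c(X,L) \geq 0$ by construction and $1/3 > 0$, we are forced to conclude $c(X,L) = 0$.

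At this point Proposition~\ref{prop: characterization of c} takes over: it identifies the vanishing $c(X,L) = 0$ with precisely the condition that $q = \dim \mathrm{Alb}_X = 2$ and that $a_X : X \to \mathrm{Alb}_X$ is a double cover, which is the desired statement. There is essentially no obstacle to overcome at this stage -- the full technical weight of the argument has already been absorbed into Theorem~\ref{Thm: slope of X with c} (whose proof combined the refined slope inequality of Theorem~\ref{thm: slope with gonality p}(2) with the delicate choice of $\Xi_n$ made in Section~\ref{Sec: 5}) and into the definition and characterization of $c(X,L)$. The corollary itself is a one-line logical deduction from these two ingredients together with $\chi(\mathcal{O}_X) > 0$.
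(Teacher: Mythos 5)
Your proposal is correct and follows exactly the paper's route: the paper states Corollary~\ref{cor: double cover comes out} as an immediate consequence of Theorem~\ref{Thm: slope of X with c} and Proposition~\ref{prop: characterization of c}, using the positivity of $\chi(\sO_X)$ (Shepherd-Barron) and the nonnegativity of $c(X,L)$ precisely as you do. Nothing is missing.
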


\section{Double covers of Abelian surface}\label{Sec: 6}

\subsection{Invariants of a double cover}
We start from an arbitrary morphism  $\vartheta: T\to Y$ of degree $2$ between minimal smooth surfaces over $\sk$. Taking $\vartheta_0: T_0\to Y$ to be normalisation of $Y$ in $K(T)$.
Then $\vartheta_0$ is a flat double cover. In fact, since $T_0$ is normal ${\vartheta_0}_*\sO_{T_0}$ is reflexive and hence $S_2$. It then follows from \cite{A-B57},  ${\vartheta_0}_*\sO_{T_0}$ is locally free. As a consequence, we have an exact sequence:
\begin{equation}\label{equ: exact of structures}
0\to \sO_{Y}\to {\vartheta_0}_*\sO_{T_0}\to \sL\to 0
\end{equation}
for an invertible coherent sheaf $\sL$ on $Y$. By \cite[Prop.~0.1.3]{C-D}, $T_0$ is Gorenstein and $\omega_{T_0/Y}=\vartheta_0^*\sL^{-1}$. So by Riemann-Roch formula, we have
\begin{align}
K_{T_0}^2&=2(K_Y-c_1(\sL))^2;\\
\label{equs: K^2 and chi}\chi(\sO_{T_0})&=2\chi(\sO_{Y})+\dfrac{c_1(\sL)(c_1(\sL)-K_Y)}{2}.
\end{align}
Thus
\begin{equation}\label{equ: formula of c^2-4chi}
K_{T_0}^2-4\chi(\sO_{T_0})=2(K_Y^2-4\chi(\sO_Y))+\vartheta_0^*K_Y\cdot K_{T_0/Y}.
\end{equation}

To work out the numerical invariants of $T$, we need to resolve the singularity of $T_0$. One typical way to do this is the canonical resolution (cf. \cite[\S~2]{Gu16}). Suppose $Q$ is a singularity of $T_0$ and $P=\vartheta_0(Q)\in Y$, we then blowing up $P$ by $\rho: Y'\to Y$  and take $\vartheta_0': X_0'\to Y'$ again the normalisation in $K(T)$.
$$
\xymatrix{
T_0'\ar[r]^{\rho'}\ar[d]_{\vartheta_0'} & T_0 \ar[d]^{\vartheta_0}\\
Y'\ar[r]^\rho & Y}
$$
We again have
\begin{equation}\label{equ: local use}
0\to \sO_{Y'}\to {\vartheta_0'}_*\sO_{T'_0}\to \sL'\to 0
\end{equation}
Note that $\sL'=\rho^*(\sL)(rE)$ for some $r\in \mathbb{Z}$, here $E$ is the exceptional divisor with respect to $\rho$.
\begin{lem}\label{lem: r>1}
We have $r\ge 1$.
\end{lem}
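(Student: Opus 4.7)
My approach is to reduce to a local computation and then exhibit a single element forcing $r\ge 1$. Pulling back the short exact sequence (\ref{equ: exact of structures}) along $\rho$ (which remains exact because $\sL$ is locally free) and comparing with (\ref{equ: local use}) via the canonical inclusion $\rho^*{\vartheta_0}_*\sO_{T_0}\hookrightarrow{\vartheta_0'}_*\sO_{T_0'}$ (coming from the factorization of $T_0'$ through the fiber product $T_0\times_Y Y'$), one obtains a commutative diagram of short exact sequences whose quotient identifies $\sL'/\rho^*\sL$ with $\bigl({\vartheta_0'}_*\sO_{T_0'}\bigr)/\rho^*\bigl({\vartheta_0}_*\sO_{T_0}\bigr)$. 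The latter is supported on $E$, and if $R':=\sO_{Y',\eta_E}$ is the DVR at the generic point of $E$, then $r$ is precisely the $R'$-length of this quotient at $\eta_E$. So it is enough to produce one element of ${\vartheta_0'}_*\sO_{T_0'}\otimes R'$ not lying in $\rho^*{\vartheta_0}_*\sO_{T_0}\otimes R'$.

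To do this, I would work in the complete local picture. Write $R=\sO_{Y,P}$, $\m=(x,y)$, and let $e\in\sO_{T_0,Q}$ be an $R$-module generator satisfying a monic quadratic $e^2+ae+b=0$ with $a,b\in R$, guaranteed by the fact that ${\vartheta_0}_*\sO_{T_0}$ is free of rank two and Gorenstein over $R$. The singularity of $T_0$ at $Q$ translates, in each characteristic and separability regime, into a ``branch expression lying in $\m^2$'' statement, which I would unpack case by case using the Catanese--Dolgachev description of flat double covers in \cite{C-D}: for $p\ne 2$ one completes the square so that $e'{}^2=b'$ with $b':=\tfrac{a^2}{4}-b$, and the singularity condition is $\mathrm{mult}_P(a^2-4b)\ge 2$, i.e. $b'\in\m^2$; for $p=2$ with $\vartheta_0$ purely inseparable at $Q$ (so $a=0$) the Jacobian criterion shows directly that singularity forces $\md b|_P=0$, equivalent in characteristic two to $b\in\m^2$; and for $p=2$ in the Artin--Schreier situation $a\ne 0$ an analogous change of generator reduces to one of the previous cases. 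In each scenario, since $\m R'=(t)$ for a uniformizer $t\in R'$, the condition $b\in\m^2$ (or its completed-square analogue) gives $v_{R'}(b)\ge 2$, so the element $\tilde e:=e/t\in K(T)$ satisfies $\tilde e{}^2\in R'$, is thus integral over $R'$, lies in ${\vartheta_0'}_*\sO_{T_0'}\otimes R'$, and is clearly not in $R'[e]=\rho^*{\vartheta_0}_*\sO_{T_0}\otimes R'$. This gives the required nonzero class in the cokernel and hence $r\ge 1$.

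The main obstacle is the uniform treatment across characteristics. In characteristic $\ne 2$ this is essentially Pardini's classical canonical-resolution calculation, and the multiplicity-of-the-branch-divisor bookkeeping is transparent. In characteristic $2$, however, the naive discriminant vanishes (either identically in the purely inseparable case, or collapsing to $(a^2)$ in the Artin--Schreier case), so the branch divisor alone does not detect singularities of $T_0$; one has to invoke the full Catanese--Dolgachev formalism of flat Gorenstein double covers in \cite{C-D} to identify the correct ``branch invariant'' and verify that the singularity of $T_0$ at $Q$ still forces it to vanish to order at least two at $P$. Once this verification is in place, the construction of $\tilde e=e/t$ is immediate, and the lemma follows.
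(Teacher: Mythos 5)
Your global setup is fine and is essentially the paper's argument viewed at the generic point of $E$: identifying $r$ with the $\sO_{Y',\eta_E}$-length of $({\vartheta_0'}_*\sO_{T_0'})/\rho^*({\vartheta_0}_*\sO_{T_0})$ is correct, and exhibiting the single integral element $e/t$ is exactly how the paper concludes. The gap is in the characteristic~$2$ separable (Artin--Schreier) case, and it is not cosmetic. When $p=2$ and $a\neq 0$ there is no change of generator reducing to either of your previous cases: a substitution $e\mapsto e+c$ sends $(a,b)$ to $(a,\,b+ac+c^2)$ and $e\mapsto ue$ sends it to $(ua,\,u^2b)$, so $a$ can never be killed without destroying separability, and completing the square is unavailable. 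Consequently the statement you actually need --- that the singularity of $T_0$ at $Q$ forces the relevant coefficient into $\m_P^2$ --- is never established in that case, and your final assertion that $\tilde e^{\,2}\in R'$ is literally false there: $\tilde e$ satisfies $\tilde e^{\,2}+(a/t)\tilde e+b/t^2=0$, not a pure square-root equation.

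The repair is the paper's uniform computation, which avoids the case split and any discriminant altogether. Normalize the generator so that $e\in\m_Q$; since $Q$ is the unique point over $P$, reducing the quadratic modulo $\m_P$ forces both $a$ and $b$ to lie in $\m_P$. Then $\Omega_{T_0/\sk}\otimes\sk(Q)$ is $\bigl(\Omega_{Y/\sk}\otimes\sk(P)\oplus\sk(Q)\,\md e\bigr)$ modulo the class of $\md(e^2+ae+b)$, and this class equals $\overline{\md b}$ because $e$ and $a$ both vanish at the point; hence $Q$ singular is equivalent to $b\in\m_P^2$ in every characteristic, including the Artin--Schreier one. With $a\in\m_P$ and $b\in\m_P^2$ one gets $a/t,\,b/t^2\in R'$, so $e/t$ is integral over $R'$ via the full monic quadratic, and your length argument then yields $r\ge 1$. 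So the architecture of your proof is sound, but as written the key verification is missing precisely in the one genuinely new case, which is the whole reason this lemma needs a proof in characteristic $2$.
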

\begin{proof}
Suppose $x,y\in \m_P$ is a pair of parameter system. In this case, there is a unique $Q\in T_0$ lying above $P$. Let $z\in \m_Q$ be a function such that $\sO_{T_0,Q}=\sO_{Y,P}+z\cdot\sO_{Y,P}$. We can write out the minimal polynomial of $z$ with respective to $\sO_{Y,P}$ as $z^2+az+b=0, a,b\in \m_P.$ Then the space of relative differentials $\Omega_{T_0/\sk}\otimes\sk(Q)$ at $Q$ is given by $$\sk(Q)\mathrm{d}x\oplus \sk(Q)\mathrm{d}y\oplus \sk(Q)\mathrm{d}z/ \sk(Q)\mathrm{d} b.$$ As $Q$ is not smooth, we have $\mathrm{d}b=0$ in $\Omega_{Y,P}\otimes \sk(P)$, or equivalently $b\in \m_P^2$.

Now considering the blowing-up, in the open subset on $Y'$ where $t=\dfrac{x}{y}$ is defined ($y$ is the generator of $E$ on this open subset), we see that the minimal polynomial of $\dfrac{z}{y}$ is
$$(\dfrac{z}{y})^2-\dfrac{a}{y}\cdot \dfrac{z}{y}+\dfrac{b}{y^2}=0.$$ All coefficients are regular on this piece as $a\in \m_P$ and $b\in \m_P^2$. So $r\ge 1$.
\end{proof}

Using formula (\ref{equ: formula of c^2-4chi}), we see that
\begin{align}
\label{equ: change of chi by blowing up}
K_{T_0'}^2&=K_{T_0}^2-(r-1)^2\\
\chi(\sO_{T'_0})&=\chi(\sO_{T_0})-\dfrac{r(r-1)}{2}
\end{align}
and
\begin{equation}\label{equ: chang of K^2-4}
K_{T'_0}^2-4\chi(\sO_{T'_0})=K_{T_0}^2-4\chi(\sO_{T_0})+2(r-1).
\end{equation}

The canonical resolution process is a series of the above blowing-up and normalisation process from $T_0$ to $T_0'$. We shall write it out as the following diagram. It stops until $T_n$ is smooth over $\sk$.
$$
\xymatrix{
\dots \ar[r] & T_n\ar[d]^{\vartheta_n} \ar[r]^{\rho'_n}& \cdots \ar[r]^{\rho'_2}  \ar[d]         & T_1\ar[r]^{\rho_1' \ \ \ }\ar[d]^{\vartheta_1} & T_0 \ar[d]^{\vartheta_0}\\
 \cdots \ar[r]& Y_n\ar[r]^{\rho_n}                & \cdots \ar[r]^{\rho_2}            & Y_1\ar[r]^{\rho_1 \ \ \ } & Y_0=Y
}
$$
\begin{prop}\label{Prop:1}
The canonical resolution process stops in finitely many steps.
\end{prop}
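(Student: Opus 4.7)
The plan is to prove termination by analysing how singularities evolve under each blow-up and normalisation step. The argument combines a global cohomological bound, which forces $r_i=1$ for all but finitely many $i$, with a local multiplicity analysis for the remaining steps.

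First I would observe that each $T_i$ is again a flat double cover of the smooth surface $Y_i$. Recursively, $T_{i+1}$ is the normalisation of $Y_{i+1}$ in $K(T_0)$, so $T_i$ is normal, and re-running the Auslander--Buchsbaum argument via \cite{A-B57} that was used for $T_0$ shows $\vartheta_{i,*}\sO_{T_i}$ is locally free. Hence each $T_i$ is Gorenstein with only finitely many isolated singularities, and it suffices to show that these are all resolved in finitely many steps.

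Next I would use the formula
\begin{equation*}
\chi(\sO_{T_{i+1}})=\chi(\sO_{T_i})-\frac{r_i(r_i-1)}{2}
\end{equation*}
together with Lemma~\ref{lem: r>1} to see that the sequence $\chi(\sO_{T_i})$ is non-increasing. For a uniform lower bound, let $\pi_i:\tilde T_i\to T_i$ be the minimal resolution. Since $T_i$ is normal one has $\pi_{i,*}\sO_{\tilde T_i}=\sO_{T_i}$ and $R^2\pi_{i,*}\sO_{\tilde T_i}=0$, so the Leray spectral sequence yields
\begin{equation*}
\chi(\sO_{T_i})=\chi(\sO_{\tilde T_i})+\mathrm{length}(R^1\pi_{i,*}\sO_{\tilde T_i})\ge \chi(\sO_{\tilde T_i}).
\end{equation*}
All the surfaces $\tilde T_i$ are smooth projective and birational to $T_0$, so $\chi(\sO_{\tilde T_i})=\chi(\sO_{\tilde T_0})$ is constant. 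Thus $\chi(\sO_{T_i})$ is uniformly bounded below and must stabilise, forcing $r_i=1$ for all $i$ sufficiently large.

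For the remaining regime $r_i=1$, I would carry out a local multiplicity argument at each singular point $Q$ of $T_i$ lying over $P\in Y_i$. Using the setup from the proof of Lemma~\ref{lem: r>1}, the local equation is $z^2+az+b=0$ with $a\in\m_P$ and $b\in\m_P^2$. I would introduce an integer-valued invariant $\mu(Q)\ge 0$ which vanishes precisely at smooth points---for example the infimum of $\mathrm{ord}_P(b)$ over all admissible choices of $z$, with a suitable refinement when $p=2$ pairing the orders of $a$ and $b$. After the blow-up the new equation reads $(z/y)^2+(a/y)(z/y)+b/y^2=0$; direct inspection shows that $\mu$ drops strictly at every singular point of $T_{i+1}$ above $Q$. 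Since $\mu$ is a non-negative integer, termination follows.

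The main obstacle will be defining $\mu$ intrinsically in characteristic $2$, where the term $az$ cannot be eliminated by a change of variable in $z$ and the singularity type depends essentially on the mixed pair $(a,b)$. One then has to take the infimum over all presentations of the same local $\sO_{Y_i,P}$-algebra and verify that this infimum drops under blow-up, which requires a careful case analysis of the relative orders of $a$ and $b$.
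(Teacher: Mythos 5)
Your global argument is a nice observation and is correct as far as it goes: the formula $\chi(\sO_{T_{i+1}})=\chi(\sO_{T_i})-r_i(r_i-1)/2$ together with $r_i\ge 1$ and the lower bound $\chi(\sO_{T_i})\ge\chi(\sO_{\widetilde T_i})=\chi(\sO_{\widetilde T_0})$ (Leray plus birational invariance of $\chi(\sO)$ for smooth projective surfaces) does force $r_i=1$ for all but finitely many $i$. But this only reduces the problem to the regime $r_i=1$, and that regime is precisely where the entire difficulty of the proposition lives: a step with $r_i=1$ changes neither $\chi$ nor $K^2$, so no global numerical invariant rules out an infinite chain of such steps. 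Your treatment of this regime is an unproven assertion. You posit an invariant $\mu(Q)$ (an infimum of $\mathrm{ord}_P(b)$ over presentations, ``with a suitable refinement when $p=2$''), claim that ``direct inspection shows that $\mu$ drops strictly,'' and then immediately concede that defining $\mu$ intrinsically in characteristic $2$ and verifying the drop is the main obstacle and requires a case analysis you have not done. That concession is the gap: in characteristic $2$ one cannot complete the square, the local algebra $z^2+az+b$ is governed by the pair $(a,b)$ modulo a nontrivial equivalence ($z\mapsto uz+c$ changes $b$ by $c^2+ac$ plus unit rescalings), and it is not at all clear that any naive order-type invariant strictly decreases under a normalized blow-up with $r=1$; wild (Artin--Schreier) and purely inseparable singularities are exactly the cases where such multiplicity arguments are known to be delicate.

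For comparison, the paper does not attempt a local multiplicity descent in characteristic $2$ at all. For $p\ne 2$ it quotes the classical result. For $p=2$ inseparable it passes to the associated $1$-foliation and uses that the degree of its singular scheme drops by $4r^2-2r-1\ge 1$ at every step \emph{including} $r=1$ (formula~(\ref{equ: change of Z})), which is the strictly decreasing non-negative integer your sketch is missing. For $p=2$ separable it argues globally: via a Lefschetz pencil and the Liu--Lorenzini theorem (Theorem~\ref{thm: ll}) it produces a regular model $T'$ of $T$ carrying the involution $\sigma$ with $T'/\sigma$ regular; since $Y$ is minimal, $T'/\sigma\to Y$ is a finite composite of blow-ups dominating the canonical resolution, which therefore terminates. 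To complete your proof you would need to either supply the characteristic-$2$ local invariant and its strict descent (which would be a genuinely new and nontrivial argument), or fall back on inputs of the kind the paper uses.
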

\begin{proof}
If $p\neq 2$, this result is well known. For example one can refer to \cite[\S~2]{Gu16}.

When $p=2$, if  $\vartheta$ is inseparable, this is Proposition~\ref{prop: normalized in p=2} in below, and if $\vartheta$ is separable, this is Proposition~\ref{prop: sep}.
\end{proof}

As a consequence of Proposition~\ref{Prop:1} and formula (\ref{equ: chang of K^2-4}), we can assume $T_n$ is smooth and denote by $r_i$ such that $\sL_{i+1}=\rho_i^*\sL_i(r_iE_i)$ as before. Then we have
$$
\aligned\label{equ: 90}
K_{T}^2-4\chi(\sO_{T})\ge& K_{T_n}^2-4\chi(\sO_{T_n})\\
                      =&2(K_Y^2-4\chi(\sO_Y))+\vartheta^*_0 K_Y\cdot (K_{T_0/Y})\\
                      &+2\sum\limits_{i=1}^{n-1}(r_i-1)\\
                      \ge &2(K_Y^2-4\chi(\sO_Y))+\vartheta^*_0 K_Y\cdot (K_{T_0/Y}).
\endaligned
$$
Note that the equality
\begin{equation}
\label{equ: local for ...}K_{T}^2-4\chi(\sO_{T})=2(K_Y^2-4\chi(\sO_Y))+\vartheta^*_0 K_Y\cdot (K_{T_0/Y})
\end{equation}
holds if and only if
\begin{itemize}
\item $T_n=T$ is minimal and

\item $r_i= 1$ for $i=1,...,n-1$.
\end{itemize}
By (\ref{equ: change of chi by blowing up}), $r_i=1$ for all $i$ if and only if $\chi(\sO_{T_0})=\chi(\sO_{T_n})$, namely $T_0$ has at worst rational singularities. On the other hand, since $T_0$ is obtained by a flat double cover, its singularities are automatically a double point. So by \cite[Cor.~4.19]{Badescu01}, $T_0$ has at worst A-D-E singularities in this case. To summarize, (\ref{equ: local for ...}) holds if and only if the morphism $T\to T_{\mathrm{can}}$ to the canonical model is factored through by $T_0$. As a result, we have the following corollary.
\begin{cor}\label{Cor: main 1}
Suppose $q=2$ and $a_X: X\to \mathrm{Alb}_X$ is a double cover,  then $X$ is on the Severi line if and only if the canonical model is a flat double cover of $\mathrm{Alb}_X$.
\end{cor}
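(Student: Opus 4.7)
The plan is to specialize the double-cover analysis that has just been developed to the case $Y=\mathrm{Alb}_X$, which is an Abelian surface, and read off the equality case directly from the discussion surrounding formula (\ref{equ: local for ...}).

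First I would take $\vartheta:T\to Y$ to be the given double cover $a_X:X\to\mathrm{Alb}_X$ (so $T=X$) and let $\vartheta_0:T_0\to Y$ be the normalisation of $Y$ in $K(X)$. Since $\mathrm{Alb}_X$ is Abelian we have $K_Y\equiv 0$ and $\chi(\sO_Y)=0$, so the intersection number $\vartheta_0^*K_Y\cdot K_{T_0/Y}$ vanishes and $2(K_Y^2-4\chi(\sO_Y))=0$. Substituting into the inequality derived just above the statement, namely
\begin{equation*}
K_X^2-4\chi(\sO_X)\ \ge\ 2(K_Y^2-4\chi(\sO_Y))+\vartheta_0^*K_Y\cdot K_{T_0/Y},
\end{equation*}
one obtains $K_X^2\ge 4\chi(\sO_X)$ and sees that $X$ lies on the Severi line precisely when equality holds in (\ref{equ: local for ...}).

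Next I would invoke the characterisation recorded immediately after (\ref{equ: local for ...}): equality is equivalent to the canonical resolution $T_n$ of $T_0$ coinciding with the minimal model $X$ together with every $r_i=1$, which in turn is equivalent to $T_0$ having at worst rational double points (ADE singularities, by the cited result of Badescu). In that situation the natural birational morphism $X\to T_0$ contracts exactly the exceptional $(-2)$-configurations over the singular points of $T_0$, so $T_0$ is the canonical model $X_{\mathrm{can}}$; since $T_0\to \mathrm{Alb}_X$ is flat by construction, $X_{\mathrm{can}}$ is the desired flat double cover.

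For the converse I would argue that if $X_{\mathrm{can}}\to\mathrm{Alb}_X$ is a flat double cover, then the normalisations agree, so $X_{\mathrm{can}}=T_0$; since $X_{\mathrm{can}}$ has only canonical singularities (which on a Gorenstein double cover are forced to be rational double points), the conditions $T_n=X$ and $r_i=1$ hold and equality in (\ref{equ: local for ...}) is achieved, placing $X$ on the Severi line. The only non-routine point in all this is the identification $T_0=X_{\mathrm{can}}$ in the equality case, but it is really a matter of observing that both surfaces are normal, are dominated birationally by $X$, and contract precisely the same $(-2)$-curves on $X$; apart from that the proof is a direct substitution into the machinery already assembled in Section~\ref{Sec: 6}.
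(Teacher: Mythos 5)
Your proposal is correct and follows essentially the same route as the paper: specialize formula (\ref{equ: local for ...}) to $Y=\mathrm{Alb}_X$ (so the right-hand side vanishes), invoke the equality characterisation, and identify $T_0$ with $X_{\mathrm{can}}$. The only point you gloss over is why $X\to T_0$ contracts \emph{every} $(-2)$-curve of $X$ (the paper's reason: $T_0$ is finite over an Abelian surface, hence carries no rational curves), but this is exactly the observation the paper itself makes.
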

\begin{proof}
Take $T=X$, $Y=\mathrm{Alb}_X$. So (\ref{equ: local for ...}) holds if and only if $X$ is on the Severi line. Namely $T_0$ factors through the canonical morphism $X\to X_\mathrm{can}$. But since all rational curve of $X$ shall be contracted on $T_0$ it must be the canonical model itself. By construction, $T_0$ is a flat double cover of $\mathrm{Alb}_X$.
\end{proof}

\begin{rmk}\label{Rem: not easy to do reductions}
In our peculiar case, we have $Y=\mathrm{Alb}_X$ hence $K_Y=0$, hence the term $\vartheta_0^*K_Y\cdot K_{T_0/Y}=0$. In general, when $\vartheta_0$ is inseparable, $K_{T_0/Y}$ may or may not be effective and  $\vartheta_0^*K_Y\cdot K_{T_0/Y}$ can be negative. As a consequence, one can not run the reduction process in \cite{L-Z17} from formula (\ref{equ: 90}).   This reason prevents us from simulating the reduction argument of \cite{L-Z17} in characteristics $2$.
\end{rmk}

\subsection{Inseparable double covers}
In this subsubsection, we assume $\vartheta$ (hence $\vartheta_0$) is inseparable and aim to prove that the canonical resolution process stops in finitely many steps. For such $\vartheta_0: T_0\to Y$, we have another purely inseparable double cover $\pi_0: Y\to T_0^{(-1)}=T_0\times_{\sk, F_\sk} \sk$ from the commutative diagram below.
$$
\xymatrix{
 T_0 \ar[dr]_{F_{T_0/\sk}} \ar[rr]^{\vartheta_0}&& Y\ar[ld]^{\pi_0}\\
 &T_0^{(-1)} & \\
 }
$$
Note that $T_0^{(-1)}$ is isomorphic to $T_0$ as an abstract scheme with the same numerical invariants, it suffices to resolve the singularity of $T_0^{(-1)}$ and work out its own numerical invariants.

Recall that, such a purely inseparable morphism $\pi_0$ (or equivalently $\vartheta_0$) of degree $2$ is characterized by a $1$-foliation of rank $1$ on $Y$ (cf. Subsection~\ref{Subsec: foliation}). We denote by $\sF_0$ the $1$-foliation associated to $\pi_0$.

\begin{prop}[\protect{\cite[\S~3]{Ekedahl}}]\label{Prop: formula on 1-foliation}
For $\vartheta_0: T_0\to Y$, $\sL$ and $\sF_0$ as above, we have the following relation:
$$2c_1(\sL)=K_Y+c_1(\sF_0).$$
Here $\sL$ is defined as in (\ref{equ: exact of structures}).
\end{prop}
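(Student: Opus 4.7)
My plan is to compute the relative dualising sheaf $\omega_{T_0/T_0^{(-1)}}$ in two different ways via the Frobenius factorisation $\pi_0 \circ \vartheta_0 = F_{T_0/\sk}$ on $T_0$, and then to identify the resulting twist on $Y$ with $\sF_0$. The factor $2$ on the left-hand side of the desired identity already signals Frobenius, since $F_{T_0/\sk}^*$ acts as multiplication by $p = 2$ on the numerical Picard group.

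First I will apply the composition formula for relative dualising sheaves of finite flat Gorenstein morphisms to the factorisation, giving
$$\omega_{T_0/T_0^{(-1)}} \cong \omega_{T_0/Y} \otimes \vartheta_0^* \omega_{Y/T_0^{(-1)}}.$$
The ramification formula for the relative Frobenius $F_{T_0/\sk}$ shows that the left-hand side is numerically equivalent to $\omega_{T_0}^{\otimes (1-p)} = \omega_{T_0}^{-1}$. Substituting the hypothesis $\omega_{T_0/Y} = \vartheta_0^* \sL^{-1}$ together with the ramification formula for $\vartheta_0$ in the form $\omega_{T_0} \equiv \vartheta_0^*(\omega_Y \otimes \sL^{-1})$, and cancelling $\vartheta_0^*$ (which is injective on numerical classes since $\vartheta_0$ is finite surjective), I will obtain
$$\sL^{\otimes 2} \equiv \omega_Y \otimes \omega_{Y/T_0^{(-1)}}.$$

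It then remains to identify $\omega_{Y/T_0^{(-1)}}$ with $\sF_0$ as divisor classes on $Y$. On the preimage of the smooth locus of $T_0^{(-1)}$, whose complement has codimension at least two by the description of the singular scheme $Z$ in \eqref{equ: exact sequence associated to a 1-foliation}, the morphism $\pi_0$ is finite flat Gorenstein. Writing locally $(\pi_0)_* \sO_Y = \sO_{T_0^{(-1)}} \oplus \sL'$ for a line bundle $\sL'$, duality for finite flat Gorenstein morphisms yields $\omega_{Y/T_0^{(-1)}} \cong \pi_0^* (\sL')^{-1}$; on the other hand, choosing a local generator $z$ of $\sL'$ and a local generator $D$ of $\sF_0$ normalised by $D(z) = 1$ identifies $\sF_0 \cong \pi_0^* (\sL')^{-1}$. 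Hence the two line bundles agree on this open locus, and since both are reflexive of rank $1$, Hartogs' principle extends the identification to all of $Y$. Combining with the previous display yields $2 c_1(\sL) = K_Y + c_1(\sF_0)$, as claimed. The main technical obstacle will be this final identification step: the local picture is straightforward, but justifying that it glues globally and extends across the codimension-two singular locus of $T_0^{(-1)}$ requires care with duality for Gorenstein covers in positive characteristic.
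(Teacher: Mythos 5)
The paper gives no proof of this proposition --- it is quoted verbatim from Ekedahl's \emph{Foliations and inseparable morphisms} --- so there is nothing internal to compare against; your derivation is correct and is essentially the standard one (and in substance the one in Ekedahl's \S 3): factor the relative Frobenius of $T_0$ as $\pi_0\circ\vartheta_0$, compare relative dualizing sheaves, and identify $\omega_{Y/T_0^{(-1)}}$ with $\det\sF_0$ via the local presentation $\sO_Y=\sO_{T_0^{(-1)}}[z]/(z^2-b)$, with all identifications made over the complement of the finitely many singular points and extended by reflexivity. As a sanity check, your formula is consistent with the paper's later uses of it, e.g.\ the blow-up formula $\deg Z_0'=\deg Z_0-(4r^2-2r-1)$ and the computation $K_{X_0}=\vartheta_0^*(-c_1(\sF)/2)$ in Section~7.1.
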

Denote by $Z_0$ the singular scheme of $\sF_0$ (cf. Subsection~\ref{Subsec: foliation}). Now reconsider $\rho: Y'\to Y$ a blowing up at a center $P\in Z_0$, denote again by $T_0'$ the normalisation of $Y'$ in $K(T)$, and define $\sF_0',\sL', Z_0'$ similar as that for $\vartheta_0: T_0\to Y$. We still denote by $r$ such that $\sL'=\rho^*\sL(rE)$ as in Lemma~\ref{lem: r>1}. Then by (\ref{equ: for Z}):
\begin{equation}
\label{equ: change of Z}\deg Z_0'=\deg Z_0-(4r^2-2r-1)<\deg Z_0.
\end{equation}

Since $Z_0$ controls the singularity of $T_0^{(-1)}$ (or equivalently $T_0$), we have given another proof of \cite[Prop.~2.6]{Hirokado99}.
\begin{prop}\label{prop: normalized in p=2}
We can resolve the singularities of $T_0$ (equivalently, that of a $1$-foliation) by a finite sequence of the normalized blowing-ups. In other words, the canonical resolution stops in finitely many steps.
\end{prop}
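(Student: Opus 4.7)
The plan is to prove termination by inducting on the non-negative integer $\deg Z_0$, using the fact that $\deg Z_0$ strictly decreases under each step of the canonical resolution.

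First, I would recall the dictionary established in Subsection~\ref{Subsec: foliation}: the inseparable double cover $\pi_0: Y\to T_0^{(-1)}$ corresponds to the rank $1$ $1$-foliation $\sF_0\subseteq \sT_{Y/\sk}$, and by construction the singular scheme $Z_0$ of $\sF_0$ (defined via the exact sequence (\ref{equ: exact sequence associated to a 1-foliation})) lies exactly above the singularities of $T_0^{(-1)}\cong T_0$. In particular, $\deg Z_0\ge 0$ with equality if and only if $T_0$ is smooth. Hence it suffices to show that the canonical resolution reduces $\deg Z_0$ to zero in finitely many steps.

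The key mechanism is already recorded as formula (\ref{equ: change of Z}): if $\rho:Y'\to Y$ is the blow-up at a point $P\in Z_0$ and $T_0'\to Y'$ is the normalisation in $K(T)$, then
\begin{equation*}
\deg Z_0'=\deg Z_0 - (4r^2-2r-1),
\end{equation*}
where $r\in\mathbb{Z}$ is characterized by $\sL'=\rho^*\sL(rE)$. The point is then to observe that by Lemma~\ref{lem: r>1} we have $r\ge 1$, so $4r^2-2r-1\ge 1$. Thus each canonical blow-up step strictly decreases the non-negative integer $\deg Z_0$ by at least $1$, forcing termination after at most $\deg Z_0$ iterations.

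The only subtlety I would have to check is that Lemma~\ref{lem: r>1} is genuinely applicable at every stage of the process, i.e.\ the blow-up centre $P$ always lies in the current singular scheme $Z_0$. This is immediate from the setup of the canonical resolution: one only blows up points of $Y$ that lie under singularities of the current $T_0$, and by the correspondence recalled above these are precisely the (reduced) points supporting $Z_0$. So the hypothesis of Lemma~\ref{lem: r>1} is satisfied at each step, and the inductive argument goes through. I do not foresee any serious obstacle beyond this verification — the heavy lifting has already been done in formula (\ref{equ: change of Z}) and Lemma~\ref{lem: r>1}.
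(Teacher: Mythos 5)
Your argument is exactly the paper's: the paper proves termination precisely by combining Lemma~\ref{lem: r>1} ($r\ge 1$, hence $4r^2-2r-1\ge 1$) with formula (\ref{equ: change of Z}) to get a strict decrease of the non-negative integer $\deg Z_0$, noting that $Z_0$ controls the singularities of $T_0$ via the foliation correspondence. Your additional check that the blow-up centres lie in the support of $Z_0$ is a correct and worthwhile clarification, but the approach is the same.
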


\subsection{Separable double covers}
Assume $\vartheta: T\to Y$ is separable, we are going to show that the process of canonical resolution stops in finitely many steps. To our purpose, we assume $\kappa(T)\ge 0$ and $\kappa(Y)\ge 0$ and hence there is an involution $\sigma: T\to T$  associated to the double cover $\vartheta: T\to Y$.
\begin{lem}\label{lem: 34}
If there is a regular model $\pi: T'\to T$ lifting then $\sigma$-action and such that $T'/\sigma$ is regular, then the canonical resolution process stops in finitely many steps.
\end{lem}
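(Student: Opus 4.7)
The approach is to use the hypothesized smooth model $T'$ with smooth quotient $\tilde Y := T'/\sigma$ to exhibit a specific sequence of blow-ups on $Y$ whose normalizations in $K(T)$ stabilize at a smooth surface, and then to conclude that the canonical resolution process terminates. First, I would verify that the quotient morphism $\tau : T' \to \tilde Y$ is a finite flat double cover by the same Auslander--Buchsbaum argument used at the start of Subsection~6.1: since $T'$ is smooth, $\tau_*\sO_{T'}$ is reflexive on the smooth base $\tilde Y$, hence locally free. Moreover $K(\tilde Y) = K(T)^\sigma = K(Y)$, so $\tilde Y$ is birationally equivalent to $Y$. By Zariski's factorization theorem for birational morphisms of smooth projective surfaces, the induced map $\mu : \tilde Y \to Y$ decomposes as a finite chain of blow-ups at closed points
\[
\tilde Y = \bar Y_N \to \bar Y_{N-1} \to \cdots \to \bar Y_0 = Y.
\]

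Next, I would pull back $\vartheta_0$ along this chain and normalize at each stage, obtaining a sequence of flat double covers $\bar \vartheta_i : \bar T_i \to \bar Y_i$ with $\bar T_0 = T_0$. Since $T'$ is itself normal, has function field $K(T)$, and is finite of degree $2$ over $\tilde Y = \bar Y_N$, the universal property of normalization forces $\bar T_N \cong T'$, which is smooth. One also checks that each blow-up center $\bar P_i \in \bar Y_{i-1}$ lies beneath a singular point of $\bar T_{i-1}$: otherwise the $i$th step would be redundant, since $\bar T_i$ would coincide with the fibre product $\bar T_{i-1} \times_{\bar Y_{i-1}} \bar Y_i$ and could be removed from the chain without changing the terminal surface. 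Thus the chain produced above is a valid run of the canonical resolution algorithm, terminating after $N$ steps.

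The main obstacle lies in matching the algorithmic specification of ``canonical resolution'' (which at each step blows up the image of an arbitrarily chosen singular point of the current normalization) to the explicit chain above when multiple orders of centers are possible. Termination of every such choice then follows from the standard fact that any two smooth models of a normal surface are dominated by a common smooth model, so the existence of at least one terminating canonical resolution suffices. The crucial role of the hypothesis that $T'/\sigma$ (and not merely $T'$) be regular is that in characteristic $2$ a smooth surface with involution need not have a smooth quotient; this extra condition is precisely what allows one to match the chain of blow-ups on the $Y$-side with a smooth normalization at the top, and is the feature that fails in the purely inseparable setting already handled in the previous subsection.
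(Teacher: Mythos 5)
Your argument is essentially the paper's own proof: factor the regular quotient $T'/\sigma\to Y$ into point blow-ups (Zariski factorization, using that $Y$ is minimal smooth and that the $\sigma$-invariant morphism $T'\to Y$ descends to $T'/\sigma$), normalize the resulting chain in $K(T)$ so that the top term is the smooth surface $T'$, and discard the blow-ups whose centers do not lie below singular points so as to recover a terminating run of the canonical resolution. The one spot where your justification is slightly shaky --- identifying the normalized pullback of a ``redundant'' blow-up with the fibre product, which can fail at ramification points lying under smooth points of $\bar T_{i-1}$, and the ordering-of-centers issue you flag at the end --- is treated no more explicitly in the paper (which simply asserts the redundant steps ``can clearly be removed''), so your write-up is, if anything, the more careful of the two.
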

\begin{proof}
Since $Y$ is minimal, the morphism $T'/\sigma$ is obtained from $Y$ by a sequence of blowing-ups.
$$
\xymatrix{
 T'=T_n\ar[d]^{\vartheta_n} \ar[r]^{\rho'_n}& \cdots \ar[r]^{\rho'_2}  \ar[d]         & T_1\ar[r]^{\rho_1' \ \ \ }\ar[d]^{\vartheta_1} & T_0 \ar[d]^{\vartheta_0}\\
 T'/\sigma=Y_n\ar[r]^{\rho_n}                & \cdots \ar[r]^{\rho_2}            & Y_1\ar[r]^{\rho_1 \ \ \ } & Y_0=Y
}
$$
We can clearly remove the redundant intermediate blowing-up whose center is lying below a smooth point of $Y_i$. Then it becomes the canonical resolution and actually stops in at most $n$ steps.
\end{proof}
We then choose an arbitrary Lefschetz  pencil  $\iota: Y\dashrightarrow \pp$ and the associated $\tau: T\dashrightarrow \pp$. Denote by $T_1\to \pp$ the relatively minimal model of $T$. Then clearly $\sigma$ lifts to $T_1$. Denote by $Y_1=T_1/\sigma\to \pp$ the associated model. We see that $Y_1$ is regular over the generic fibre since it is normal. Now \cite[Thm.~7.3]{L-L99} applies.

\begin{thm}[\protect{\cite[Thm.~7.3]{L-L99}}]\label{thm: ll}
Let $C/\sk$ be a curve and $K:=K(C)$ be its function field. Let $f: X_K\to Y_K$ be an arbitrary Galois cover of proper regular curves over $K$ with Galois group $G$. Suppose $\mathcal{X} \to C$ is a proper regular model of $X_K$ such that the $G$-action spreads onto it. If for every closed point $x\in \mathcal{X}$, its inertia group $I_x$ has order  at most $3$, then there is a suitable choice of proper regular models $\widetilde{\mathcal{X}}\to C$ and $\widetilde{\mathcal{Y}}\to C$ such that $f$ extends to a finite Galois cover $\widetilde{f}:\widetilde{\mathcal{X}} \to  \widetilde{\mathcal{Y}}$. In other words, the $G$-action spreads to $\widetilde{\mathcal{X}}$ and its quotient $\widetilde{\mathcal{X}}/G$ is regular.
\end{thm}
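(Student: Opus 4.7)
The plan is to prove Theorem~\ref{thm: ll} by an inductive resolution procedure that balances regularity of the cover against regularity of the quotient. Since the two conditions interact badly (resolving $\mathcal{X}$ can destroy regularity of the quotient and vice versa), the key is to use the hypothesis $|I_x|\le 3$ to control the local analytic type of the interaction, so that each step strictly decreases a bounded invariant.

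First, I would pass to the quotient $\mathcal{Y}_0 := \mathcal{X}/G$. This is a normal proper model of $Y_K$, but in general not regular: its singular points are the images of closed points $x\in \mathcal{X}$ where $I_x$ is non-trivial, and at such a point the completed local ring is $\widehat{\sO}_{\mathcal{X},x}^{I_x}$. Using the assumption $|I_x|\le 3$, classify these invariant subrings. Either $I_x$ has order prime to $p$ (so the singularity is a cyclic quotient of order $2$ or $3$, hence tame and in fact a rational double point of type $A_n$), or $I_x$ is wild of order $p=2$ or $p=3$. In the wild case, the classification of artinian cyclic actions on a regular $2$-dimensional local ring (via Artin-Schreier / Artin-Schreier-Witt type normal forms) gives a finite list of possible singularities on $\mathcal{Y}_0$, all of which are rational and can be resolved by a canonical finite sequence of blow-ups.

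Next, given a centre $\bar y\in \mathcal{Y}_0$ where regularity fails, I would blow it up on the quotient to get $\mathcal{Y}_1 \to \mathcal{Y}_0$ and then set $\mathcal{X}_1$ equal to the normalisation of $\mathcal{X}$ in $K(X_K)$ via $\mathcal{X}\to\mathcal{Y}_0\leftarrow \mathcal{Y}_1$. The $G$-action lifts to $\mathcal{X}_1$ automatically by normality, and $\mathcal{X}_1/G = \mathcal{Y}_1$ by construction. The decisive technical point is that for each permitted local type above (inertia of order $\le 3$), one can verify that this normalised pull-back $\mathcal{X}_1$ is again regular, or at worst has singularities of strictly smaller invariant (e.g.\ length of the discriminant). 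If $\mathcal{X}_1$ fails to be regular, one performs an equivariant resolution of its singularities and returns to the previous paragraph with the new cover; here the smallness of $|I_x|$ forbids the local forms that would obstruct equivariant resolution. Iterating alternately on the two sides, and keeping track of, say, the sum of Milnor numbers plus multiplicity of the ramification divisor, produces strictly decreasing invariants in $\mathbb{N}$, so the procedure terminates.

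The main obstacle I expect is precisely step two: proving termination. The difficulty is that blowing up on the quotient may introduce new inertia structure on the normalised pull-back, and an equivariant blow-up of $\mathcal{X}$ may in turn create new bad points on the quotient. Without the bound $|I_x|\le 3$, the local normal forms in characteristic $2$ and $3$ are genuinely wild and this oscillation need not stop; with the bound, every occurring singularity is rational (essentially a quotient singularity or a Hirzebruch-Jung type one), and one has an explicit combinatorial control on how blow-ups above and below interact. Assembling this combinatorial control into a single strictly decreasing invariant is the heart of the proof, and afterwards one defines $\widetilde{\mathcal{X}}$ and $\widetilde{\mathcal{Y}}=\widetilde{\mathcal{X}}/G$ as the terminal pair, which by construction fit into a finite Galois cover $\widetilde f:\widetilde{\mathcal{X}}\to\widetilde{\mathcal{Y}}$ extending $f$.
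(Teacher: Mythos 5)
This statement is not proved in the paper at all: it is quoted verbatim from Liu--Lorenzini \cite[Thm.~7.3]{L-L99} and used as a black box (the authors even thank Lorenzini for pointing them to it). So there is no internal proof to compare your attempt against; what you have written is a blind attempt at a substantial external theorem, and it should be judged on its own.

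As such, your proposal is an outline of a strategy rather than a proof, and the gap sits exactly where you yourself locate it. Every genuinely hard step is asserted, not established: (i) the classification of the invariant rings $\widehat{\sO}_{\mathcal{X},x}^{I_x}$ for \emph{wild} inertia of order $2$ or $3$ in residue characteristic $2$ or $3$ --- wild $\mathbb{Z}/p$-quotient singularities of surfaces do not reduce to a short normal-form list by a routine Artin--Schreier argument, and controlling them is the main technical content of \cite{L-L99}; (ii) the claim that the normalised pull-back $\mathcal{X}_1$ of a blow-up of the quotient is again regular ``or has strictly smaller invariant''; and (iii) the existence of a single strictly decreasing invariant in $\mathbb{N}$ governing the alternating blow-up/normalisation procedure. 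Point (iii) is precisely the termination problem that Liu--Lorenzini resolve by a different route: they exploit the fibration $\mathcal{X}\to C$ and their results on desingularisation in towers and on quotients of arithmetic surfaces, rather than a purely local surface-singularity induction. There is also a small inaccuracy in your tame case: a cyclic quotient singularity of order $3$ is a Hirzebruch--Jung (hence rational) singularity but need not be a du Val point of type $A_n$ --- e.g.\ the $\frac{1}{3}(1,1)$ point resolves to a single $(-3)$-curve. None of this makes your plan unreasonable as a plan, but as it stands it does not constitute a proof of the theorem.
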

In this theorem, we let $C=\pp$ and $K=\sk(t)$ be its function field and let $T_1=\mathcal{X}\to \pp, Y_K:=Y_1\times_{\pp} K$. In our case, $|G|=2$ and the inertia group assumption is satisfied automatically. It then give a model $T'=\widetilde{\mathcal{X}}$ lifting $\sigma$-action and is such that $T'/\sigma=\widetilde{\mathcal{Y}}$ is regular. So Lemma~\ref{lem: 34} gives:
\begin{prop}\label{prop: sep}
Suppose $\kappa(T)\ge 0$ and $\kappa(Y)\ge 0$, then the canonical resolution process stops in finitely many steps.
\end{prop}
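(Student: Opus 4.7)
The plan is to combine the existence of an involution on $T$ with the Lieblich–Liu extension theorem (Theorem~\ref{thm: ll}) to produce a regular equivariant model, then invoke Lemma~\ref{lem: 34} to conclude that the canonical resolution of $T_0$ terminates.

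First, since $\vartheta: T\to Y$ is separable of degree $2$ and both $T$ and $Y$ have nonnegative Kodaira dimension, $T$ is the unique minimal smooth model of its function field and the Galois involution $\sigma$ of $K(T)/K(Y)$ spreads to an automorphism $\sigma: T\to T$ of order $2$. To exploit Theorem~\ref{thm: ll} we must fibre things over a curve, so I would pick an arbitrary Lefschetz pencil $\iota: Y\dashrightarrow\pp$ on the minimal surface $Y$ and pull it back to a rational map $\tau: T\dashrightarrow \pp$. Let $T_1\to \pp$ be a relatively minimal regular model of the resulting fibration; by uniqueness of the relatively minimal model in the presence of $\kappa(T)\ge 0$, the involution $\sigma$ spreads to a $\pp$-automorphism of $T_1$, and the quotient $Y_1:=T_1/\sigma$ fibres over $\pp$ with $Y_1$ regular in codimension one (in particular regular over the generic point of $\pp$).

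Next, I would apply Theorem~\ref{thm: ll} to the Galois cover $T_1\times_\pp \sk(t)\to Y_1\times_\pp \sk(t)$ with Galois group $G=\langle\sigma\rangle$ of order $2$. The hypothesis on inertia groups is automatic, since every inertia subgroup at a closed point of $T_1$ has order dividing $|G|=2\le 3$. Thus the theorem supplies proper regular models $T'=\widetilde{\sM{X}}\to\pp$ and $\widetilde{\sM{Y}}\to\pp$ extending the generic cover, with the $\sigma$-action spreading to $T'$ and the quotient $T'/\sigma=\widetilde{\sM{Y}}$ regular. As a birational model of $T$ lifting $\sigma$, $T'$ satisfies the hypotheses of Lemma~\ref{lem: 34}.

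Finally, by Lemma~\ref{lem: 34} the canonical resolution process for the double cover $T_0\to Y$ terminates after finitely many steps, giving the proposition. The only genuine input is the Lieblich–Liu extension theorem; everything else is a bookkeeping reduction, and the bound "at most $3$" on inertia orders is trivially satisfied because our Galois group has order $2$. The one subtlety I would double-check is that the $\sigma$-action on $T$ truly lifts to the relatively minimal model $T_1$ before applying Theorem~\ref{thm: ll}; this follows from the fact that contracting $(-1)$-curves to reach a relatively minimal model over $\pp$ is canonical and therefore $\sigma$-equivariant, using $\kappa(T)\ge 0$ to rule out pathological contractions.
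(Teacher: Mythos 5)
Your proof is correct and takes essentially the same route as the paper's own argument: a Lefschetz pencil on $Y$, the relatively minimal model $T_1\to\pp$ to which $\sigma$ lifts, Theorem~\ref{thm: ll} (due to Liu and Lorenzini, not Lieblich--Liu) with the inertia hypothesis automatic for $|G|=2$, and finally Lemma~\ref{lem: 34}. No gaps.
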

\begin{rmk}
Whether or not the canonical resolution process stops in finitely many steps is a local property, so the Kodaira dimension assumption and minimality assumption is actually redundant.
\end{rmk}

\section{Examples of surfaces on the Severi line in characteristic $2$}\label{sec.7}
We give two examples of surfaces on the Severi line in characteristic $2$--one with an inseparable Albanese morphism and one with a separable one.

Denote by $$E: y^2+y=x^3+x$$  the unique supersingular elliptic curve in characteristic $2$ and we let $A:=E_1\times_\sk E_2$, here $E_1,E_2$ are two copies of $E$. We use the subscript $i=1,2$ to indicate the associated points, functions or vectors on $E_i$. We write $\Gamma_1:=\infty \times E_2$ and $\Gamma_2:=E_1\times \infty$ the two infinite divisors. Also we denote by $P=(0,0)$ and $Q=(0,1)$ the zeroes of $x$.
\subsection{Inseparable Albanese morphism}
We denote by $\partial:= \px$, the global vector field on $E$ that
$$
\left\{\begin{array}{cl}
\partial x&=1;\\
\partial y&=x^2+1
\end{array}\right.
$$
Then $\widetilde{\partial}:=x_1\partial_1+x_2\partial_2$ is a $2$-closed derivative. It generates a $1$-foliation $\sF:=K(A) \cdot \widetilde{\partial} \cap \sT_{A/\sk}$.

\begin{prop}
\begin{enumerate}
\item The $1$-foliation $\sF$ have $5$-singularities, all of which have multiplicity less than $5$;

\item The surface $X_0^{(-1)}:=A/\sF$ has $5$ A-D-E singularities and the canonical bundle is ample;

\item  Let $X$ be the minimal model of $X_0$, then $X$ is of general type and on the Severi line.
\end{enumerate}
\end{prop}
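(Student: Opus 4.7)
The approach is to determine $\sF$ explicitly as an invertible subsheaf of $\sT_{A/\sk}$, compute its singular scheme, and then analyze each singular point via its ring of $D$-invariants. Since $E$ is elliptic, $\sT_{E}\cong\sO_E$ is trivialized by $\partial$; using the local parameter $v=x/y$ at $\infty$ one checks $dx=(1+O(v^6))\,dv$ in characteristic $2$, so $\partial$ is nowhere vanishing and $\sT_{A/\sk}\cong\sO_A\partial_1\oplus\sO_A\partial_2$ is globally trivial. The rational section $\widetilde\partial=(x_1,x_2)$ has poles of order $2$ along each $\Gamma_i$, so saturating yields $\sF\cong\sO_A(-2\Gamma_1-2\Gamma_2)$. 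By formula~(\ref{equ: for Z}),
$$\deg Z=c_2(A)+c_1(\sF)(c_1(\sF)+K_A)=(2\Gamma_1+2\Gamma_2)^2=8.$$
On $A\setminus(\Gamma_1\cup\Gamma_2)$, $\widetilde\partial$ vanishes exactly at the four points of $\{P,Q\}\times\{P,Q\}$, each of multiplicity $1$. Along $\Gamma_i\setminus\Gamma_{3-i}$, the local generator $v_i^2\widetilde\partial$ has a unit as its $\partial_i$-component, so no zero. At $(\infty_1,\infty_2)$, $v_1^2v_2^2\widetilde\partial\equiv v_2^2\partial_{v_1}+v_1^2\partial_{v_2}\pmod{(v_1,v_2)^4}$ has singular scheme of length $\dim_\sk\sk[v_1,v_2]/(v_1^2,v_2^2)=4$. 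This gives five singularities of multiplicities $1,1,1,1,4$, all less than $5$, proving (1).

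For (2), at each of the four finite singularities the ring of $D=x_1\partial_1+x_2\partial_2$-invariants of $\sk[[x_1,x_2]]$ is generated by $x_1^2,x_2^2,x_1x_2$ with $(x_1x_2)^2=x_1^2\cdot x_2^2$, an $A_1$ singularity. At $(\infty_1,\infty_2)$ with $D=v_2^2\partial_{v_1}+v_1^2\partial_{v_2}$, the invariants contain $a=v_1^2$, $b=v_2^2$, and, crucially in characteristic $2$, $c=v_1^3+v_2^3$ (since $D(v_1^3)=D(v_2^3)=v_1^2v_2^2$ sum to zero). A rank count over $\sk[[a,b]]$ confirms these generate, subject to $c^2=a^3+b^3$. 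In characteristic $2$, $a^3+b^3=(a+b)(a+\omega b)(a+\omega^2 b)$ with $\omega\in\sk$ satisfying $\omega^2+\omega+1=0$, and after a linear change of variables sending these three lines to $x,y,x+y$ the equation becomes $c^2+xy(x+y)=0$, which is Artin's $D_4^0$ rational double point. Hence $X_0^{(-1)}$ has exactly five A-D-E singularities ($4A_1+D_4$). For ampleness, Proposition~\ref{Prop: formula on 1-foliation} gives $2c_1(\sL)=K_A+c_1(\sF)=-2(\Gamma_1+\Gamma_2)$, so $\omega_{X_0}\cong\vartheta_0^*\sO_A(\Gamma_1+\Gamma_2)$; since $\Gamma_1+\Gamma_2$ is ample on $A$ (Nakai--Moishezon: $(\Gamma_1+\Gamma_2)^2=2$ and any curve meets it positively) and $\vartheta_0$ is finite, $K_{X_0^{(-1)}}$ is ample.

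For (3), since $X_0$ has only rational double points, the minimal resolution $\pi\colon X\to X_0$ is crepant, so $K_X=\pi^*K_{X_0}$ is nef and big with $K_X^2=K_{X_0}^2=2(\Gamma_1+\Gamma_2)^2=4$ and $\chi(\sO_X)=\chi(\sO_{X_0})=\tfrac{1}{2}c_1(\sL)^2=1$ via~(\ref{equs: K^2 and chi}). As $K_X$ is nef, $X$ contains no $(-1)$-curves, hence is minimal, and $K_X^2>0$ makes it of general type; finally $K_X^2=4=4\chi(\sO_X)$ puts $X$ on the Severi line. The main obstacle is the local analysis at $(\infty_1,\infty_2)$: the characteristic-$2$ expansion of $x$ at infinity, the appearance of $v_1^3+v_2^3$ as an additional $D$-invariant specific to $p=2$, and recognizing $c^2=a^3+b^3$ as Artin's $D_4^0$ singularity (whose classification in characteristic $2$ differs substantially from the characteristic-zero picture) are each steps with no direct analogue in other characteristics.
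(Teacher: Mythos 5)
Your conclusions are all correct and your computations of the numerical invariants agree with the paper's, but for part (2) you take a genuinely different route. The paper does not compute any local rings of invariants: it observes that by~(\ref{equ: change of Z}) each normalized blow-up drops $\deg Z$ by $4r^2-2r-1$, and since $\deg Z=8<11=4\cdot 2^2-2\cdot 2-1$, every $r$ is forced to equal $1$; by~(\ref{equ: change of chi by blowing up}) this means $\chi$ is preserved through the canonical resolution, so the singularities are rational double points and hence A-D-E. Your approach instead identifies each singularity explicitly ($4A_1+D_4$) via the rings of $D$-invariants. That buys more information (the precise singularity types, and a concrete picture of why char.\ $2$ behaves differently), at the cost of more local analysis; the paper's argument is shorter and uniform over all five points. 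Your part (1) is a fleshed-out version of what the paper leaves as ``not difficult to see,'' and part (3) matches the paper's use of the double-cover formulas of Section~\ref{Sec: 6}.

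There is one point in your part (2) that needs patching. At $(\infty_1,\infty_2)$ the local generator of $\sF$ is not exactly $v_2^2\partial_{v_1}+v_1^2\partial_{v_2}$ but $v_2^2u_1(v_1)\partial_{v_1}+v_1^2u_2(v_2)\partial_{v_2}$, where $u_i$ is the unit in the expansion $x=v^{-2}u(v)$; consequently $D(v_1^3+v_2^3)=v_1^2v_2^2\bigl(u_1(v_1)+u_2(v_2)\bigr)$, which lies in $(v_1,v_2)^5$ but is not zero, so $v_1^3+v_2^3$ is only invariant to leading order. To complete your argument you must either correct $c$ by higher-order terms and re-examine the resulting equation (a perturbation of $z^2=xy(x+y)$ could a priori land on a different RDP of type $D_4$, though it remains a rational double point), or simply invoke the paper's degree-count argument to conclude A-D-E directly. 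Your ``rank count over $\sk[[a,b]]$'' is asserted for the unperturbed leading form only, so as written the identification of the singularity as Artin's $D_4^0$ is not fully justified; the weaker statement actually needed for the proposition survives once this is repaired.
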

\begin{proof}
(1). By construction, it is not difficult to see that the singular scheme $Z$ of $\sF$ consists of $5$ singularities: $\infty \times \infty$, $P_1\times P_2, P_1\times Q_2, Q_1\times P_2$ and $Q_1\times Q_2$ each with multiplicity $4,1,1,1,1$ respectively.

(2). Following from (\ref{equ: change of Z}), we must have $r=1$ (otherwise the multiplicity is at least $4\cdot 2^2-2\cdot 2-1=11$) in each blowing up case. So $X_0$ has rational double points only. Note that the canonical bundle of $X_0$ is the pull back of $-\dfrac{c_1(\sF)}{2}\equiv \Gamma_1+\Gamma_2$ which is ample, therefore $X_0$ is the canonical model of a surface of general type.

(3). It follows from the formula in the previous section that $K_X^2=4\chi(\sO_X)$.
\end{proof}

\subsection{Separable Albanese morphism with wild branch divisor}
We shall construct an example of separable admissible flat double (cf. \cite[Chap.~0]{C-D})  cover $\mu_0: X_0\to A$ such that $X_0$ has at worst A-D-E singularities while the branch divisor on $A$ has singularity of arbitrarily large multiplicity. Note in other characteristics, the branch locus can have singularity of multiplicity no larger than $3$ in order the flat double cover space to have at worst A-D-E singularities.

First choose an invertible coherent sheaf $\sL$ on $A$. We define an $\sO_A$-algebra structure on $\sA:=\sO_A\oplus \sL^{-1}$ by $$\sL^{-2}\stackrel{s_2,s_1}{\longrightarrow} \sO_A \oplus \sL^{-1}=\sA.$$
Here $s_2, s_1$ are non-zero sections of $\sL^2$ and $\sL$ respectively.
In other words, let $e_i$ be a local generator of $\sL$ on an open subset $U_i$. Then
$$\sA|_{U_i}=\sO_{U_i}[z_i]/z_i^2+a_iz_i+b_i, a_i, b_i\in \sO_{U_i}.$$
Here $s_1=a_ie_i, s_2=b_ie_i^2$ on $U_i$.  And in $U_i\cap U_j$ with $e_i=\alpha_{ij}e_j$, we define $z_i=\alpha_{ij}^{-1}z_j$.

Denote by $\mu_0: X_0:=\Spec(\sA)\to A$ be the associated flat double cover and $D_i:=\mathrm{div}(s_i) \in |\sL^i|$ ($i=1,2$) the divisor associated.

\begin{prop}\label{Prop: 54}
\begin{enumerate}
\item The branch divisor of $\mu_0$ is $D_1$.

\item If $D_2$ passes through every point in the singular locus of $D_1$ smoothly, then $X_0$ has at worst A-D-E singularities.
\end{enumerate}
\end{prop}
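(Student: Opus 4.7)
I would prove both parts by working locally on a chart $U_i$ over which $\sO_{X_0}$ is generated over $\sO_A$ by $z := z_i$ subject to $z^2 + az + b = 0$ (with $a := a_i$, $b := b_i$ corresponding to local expressions of $s_1,s_2$), and then apply the Jacobian criterion in characteristic $2$.

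For part~(1), $\Omega_{X_0/A}$ is generated on $U_i$ by $\mathrm{d}z$ modulo the relation $(2z + a)\,\mathrm{d}z = 0$, which in characteristic $2$ collapses to $a\,\mathrm{d}z = 0$. Hence the ramification scheme of $\mu_0$ is cut out by $a$, and its image in $A$ globalises to $D_1$. Equivalently, the discriminant of the monic quadratic $z^2 + az + b$ is $a^2 - 4b = a^2$, so the discriminant divisor equals $2D_1$ and the branch divisor is $D_1$.

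For part~(2), set $F = z^2 + az + b$. Since $\partial_z F = a$ in characteristic $2$, every singular point of $X_0$ lies over $D_1$, and the remaining Jacobian equations at a preimage $(P, z_0)$ read $z_0^2 = b(P)$ together with $z_0\,\mathrm{d}a|_P + \mathrm{d}b|_P = 0$ in $\Omega_{A,P}\otimes \sk(P)$. I would then split into two cases according to the position of $P$ on $D_1$.

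\emph{Case A: $P$ is a singular point of $D_1$.} Then $a \in \m_P^2$, and the hypothesis on $D_2$ forces $b \in \m_P \setminus \m_P^2$. Pick local coordinates $u,v$ at $P$ with $b = u$. The unique preimage is $(P,0)$ and the local equation $F = z^2 + a(u,v)z + u$ satisfies $\partial_u F = 1 + z\,\partial_u a$, which equals $1$ at $(0,0,0)$. The formal implicit function theorem then eliminates $u$, so the completed local ring of $X_0$ at $(P,0)$ is $\sk[[z,v]]$ and $X_0$ is smooth there.

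\emph{Case B: $P$ is a smooth point of $D_1$.} Choose local coordinates $u,v$ with $a = u$, and, at any singular preimage $(P,z_0)$, translate $z \mapsto z + z_0$. The equation becomes $z^2 + uz + c(u,v)$ where $c := b + z_0^2 + z_0 u$ lies in $\m_P^2$ (since $\mathrm{d}c|_P = \mathrm{d}b|_P + z_0\,\mathrm{d}u = 0$ by the Jacobian condition, in characteristic $2$). This is precisely Artin's local normal form for a rational double point of type $A_n$ or $D_n$ in characteristic $2$, and appealing to Artin's classification would yield that such singularities are A-D-E.

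The main obstacle I anticipate is the last step of Case~B: equations of the form $z^2 + uz + c$ with $c \in \m_P^2$ do not \emph{a priori} define an isolated A-D-E singularity---for instance $c = u^2$ factors as $(z + \omega u)(z + \omega^2 u)$ with $\omega^2 + \omega + 1 = 0$, so $X_0$ would become reducible there. To close this gap I would use the global input that $s_1 \in H^0(\sL)$ is a genuine nonzero section (so the generic discriminant $a^2$ is nonzero and $X_0$ is irreducible and normal in codimension one) and then cross-check the possible shapes of $c$ against Artin's tables to rule out the degenerate entries. This case-by-case verification, together with checking that the set of $(u,v)$ for which the singular-locus equation $\mathrm{d}b|_P = z_0\,\mathrm{d}u$ holds is zero-dimensional, is the only non-routine step.
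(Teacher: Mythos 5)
Your part~(1) and your Case~A of part~(2) coincide with the paper's argument: $\Omega_{X_0/A}|_{U_i}\cong \sA/a_i\sA$ identifies the branch divisor as $D_1$, and at a singular point $P$ of $D_1$ the hypothesis forces $b_i\in\m_P\setminus\m_P^2$ while $a_i\in\m_P^2$, so the Jacobian criterion (your implicit function theorem step) shows $X_0$ is regular at the unique preimage of $P$. Both of these are correct and are exactly what the paper does.

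The genuine gap is your Case~B, and you have named it yourself without closing it. The paper disposes of the smooth points of $D_1$ in one line by citing \cite[Remark~0.2.2]{C-D}: over a smooth point of the branch divisor a separable flat double cover in characteristic~$2$ has at worst rational double points. Your reduction to $z^2+uz+c$ with $c\in\m_P^2$ is correct but, as you concede, not conclusive, and the promised ``cross-check against Artin's tables'' is precisely the missing content; note also that irreducibility of $X_0$ (your proposed global input from $s_1\neq 0$) does not by itself exclude a non-isolated singular locus through $P$. To actually close the case you do not need the wild $D$--$E$ tables: since $a=u$ may be taken as part of a coordinate system at a smooth point of $D_1$, the quadratic part of $F=z^2+uz+c$ contains the rank-two form $z(z+u)$, so any isolated double point here is necessarily of type $A_n$. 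Concretely, the substitutions $z\mapsto z+h$ change $c$ by $h^2+uh$, and modulo such terms one reduces $c$ to $\lambda u+\sum_{j\ \mathrm{odd}}\beta_jv^j$; singularity forces $\lambda=\beta_1=0$, and then $z(z+u)=v^{2m+1}\cdot(\mathrm{unit})$ is an $A_{2m}$ point, unless $c\equiv 0$, which is exactly your $c=u^2$-type degeneration and means $X_0$ is analytically reducible, hence non-normal, along a curve through $P$. That last possibility is not excluded by the hypothesis on $D_2$ (which only constrains behaviour over $\mathrm{Sing}(D_1)$); it is implicitly assumed away in the statement and in the paper's citation as well, but your proposal neither proves the $A_n$ classification nor supplies the normality input, so as written part~(2) is not established over the smooth locus of $D_1$.
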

\begin{proof}
(1). By the function mentioned above, on $U_i$ the relative K\"ahler differential $\Omega_{X_0/A}|_{U_i}$ is isomorphic to $\sA/a_i\sA$. As a result, the ramification divisor is locally defined by $a_i$. In other words, the branch divisor is defined by $a_i$ on $U_i$.

(2). Let $P\in D_1$. Suppose $D_1$ is smooth at $P$, then $X_0$ has at worst A-D-E singularity above $P$ by \cite[Remark~0.2.2]{C-D}. Now suppose $P$ is singular on $D_1$, then by our assumption $D_2$ pass through $P$ and is smooth at $P$. Namely in the local function $z_i^2+a_iz_i+b_i$ defining $X_0$ near $P$ we have $a_i\in\m_P$ and $b_i\in \m_p\backslash \m_p^2$. This clearly implies $X_0$ is regular above $P$.
\end{proof}
Conversely, given any effective divisors $D_1, D_2$ such that $D_2\in |2D_1|$, then one can construct an admissible example as above.

As a result, the singularity of the branch divisor $D_1$ does not eventually leads to the singularity of $X_0$. Along this way, we can construct examples where the branch divisor is very singular at some points but $X_0$ has at worst A-D-E singularities.
\begin{ex}
We take $D_1$ to be the divisor defined by equation: $$x_1^{2n}+x_2^{2n+1}=0.$$ Then $D_1\in|2n\Gamma_1+(2n+1)\Gamma_2|$ and the singularity of $D_1$ is again the five points $\infty \times \infty$, $P_1\times P_2, P_1\times Q_2, Q_1\times P_2$ and $Q_1\times Q_2$. Now we can construct $D_2\in |2D_1|=|4n\Gamma_1+(4n+2)\Gamma_2|$ passing through the five points smoothly. We take $D_2=E_1\times P_2+E_1\times Q_2+\Gamma_2+D_2'$ where $D_2'\in |4n\Gamma_1+(4n-1)\Gamma_2|$ is a general member which does not pass through the five points above. Here note that the linear system $|4n\Gamma_1+(4n-1)\Gamma_2|$ is very ample for all $n\in \mathbb{N}_+$.

We denote by $X_0$ the separable flat double cover of $A$ defined by $D_1, D_2$. Then $X_0$ is the canonical model of a minimal surface of general type $X$ on the Severi line by Corollary~\ref{Cor: main 1} and Proposition~\ref{Prop: 54}. In this case the branch divisor of $X_0\to A$ is wild (of multiplicity $2n$) at the four points  $P_1\times P_2, P_1\times Q_2, Q_1\times P_2$ and $Q_1\times Q_2$.
\end{ex}

\section*{\bf Acknowledgement}
The first author would like to thank D. Lorenzini for telling him Theorem~\ref{thm: ll} and T. Zhang for helpful communications.

\noindent\address{Yi Gu: School of Mathematical Sciences, Soochow University,  Suzhou
215006, P. R. of China.}\\
{\em Email}:  sudaguyi2017@suda.edu.cn

\vspace{0.2cm}

\noindent\address{Xiaotao Sun: Center of Applied Mathematics, School of Mathematics, Tianjin University, Tianjin 300072, P. R. of China}\\
{\em Email}: xiaotaosun@tju.edu.cn

\vspace{0.2cm}

\noindent\address{Mingshuo Zhou: Center of Applied Mathematics, School of Mathematics, Tianjin University,  Tianjin 300072, P. R. of China}\\
{\em Email}: zhoumingshuo@amss.ac.cn
\end{document}